\newtheorem{theorem}{Theorem}[section]
\newtheorem{lemma}[theorem]{Lemma}
\newtheorem{corollary}[theorem]{Corollary}
\newtheorem{proposition}[theorem]{Proposition}
\newtheorem{remark}[theorem]{Remark}
\newtheorem{definition}[theorem]{Definition}
\newcommand{\bgl}{\begin{equation}}         
\newcommand{\egl}{\end{equation}}
\newcommand{\bgln}{\begin{eqnarray}}        
\newcommand{\egln}{\end{eqnarray}}
\newcommand{\bglnoz}{\begin{eqnarray*}}     
\newcommand{\eglnoz}{\end{eqnarray*}}
\newcommand{\btheo}{\begin{theorem}}
\newcommand{\etheo}{\end{theorem}}
\newcommand{\blemma}{\begin{lemma}}
\newcommand{\elemma}{\end{lemma}}
\newcommand{\bproof}{\begin{proof}}
\newcommand{\eproof}{\end{proof}}
\newcommand{\bbew}{\begin{beweis}}
\newcommand{\ebew}{\end{beweis}}
\newcommand{\bremark}{\begin{remark}\em}
\newcommand{\eremark}{\end{remark}}
\newcommand{\bdefin}{\begin{definition}}
\newcommand{\edefin}{\end{definition}}
\newcommand{\bprop}{\begin{proposition}}
\newcommand{\eprop}{\end{proposition}}
\newcommand{\bcor}{\begin{corollary}}
\newcommand{\ecor}{\end{corollary}}
\newcommand{\mn}{\par\medskip\noindent}
\newcommand{\cA}{\mathfrak A}
\newcommand{\cC}{\mathcal C}
\newcommand{\cD}{\mathcal D}
\newcommand{\cE}{\mathcal E}
\newcommand{\cI}{\mathcal I}
\newcommand{\cL}{\mathcal L}
\newcommand{\cP}{\mathcal P}
\newcommand{\cT}{\mathfrak T}
\newcommand{\lori}{\longrightarrow}
\newcommand{\ve}{\varepsilon}
\newcommand{\vp}{\varphi}
\newcommand{\Eins}{{\mathchoice{1\!\!1}{1\!\!1}{1\!\!1}{1\!\!1}}}
\def\SEMI{\mbox{$\times\kern-2pt\vrule height5pt width.6pt \kern3pt $}}
\newcommand{\Ker}{{\rm Ker\,}}
\newcommand{\Img}{{\rm Im\,}}
\newcommand{\Tr}{{\rm Tr\,}}
\newcommand{\tei}{\, | \,}
\newcommand{\Spec}{\mathrm{ Spec}\,}
\newcommand{\Rea}{\mathrm{ Re}}
\newcommand{\rxrx}{{R \rtimes R^\times}}
\newcommand{\rx}{R^{\times}}
\def\Cz{\mathbb{C}}
\def\Nz{\mathbb{N}}
\def\Qz{\mathbb{Q}}
\def\Rz{\mathbb{R}}
\def\Zz{\mathbb{Z}}
\def\Tz{\mathbb{T}}
\newcommand{\tr}{\operatorname{tr}}      
\newcommand{\abs}[1]{\lvert#1\rvert}     
\newcommand{\defeq}{\mathrel{:=}}     
\newcommand{\inv}{^{-1}}
\def\tdj{\tilde\delta_{J_\gamma}}
\def\tdi{\tilde\delta_I}
\newcommand\af{{\mathbb{A}_f}}
\newcommand\afs{{\mathbb{A}^*_f}}
\def\rhat{{\hat R}}
\def\oma{\Omega_\af}
\def\omr{\Omega_\rhat}
\newcommand{\kxkx}{{K\rtimes K^*}}
\newcommand\chf{{\Eins}}
\def\nmi{\underline{\cI_k}}
\begin{document}
\title[C*-algebras of Toeplitz type from number fields]{$C^*$-algebras of
Toeplitz type associated with algebraic
number fields}
\date{\today}
\author[J. Cuntz]{Joachim Cuntz$^1$}
\author[C. Deninger]{Christopher Deninger$^2$}
\author[M. Laca]{Marcelo Laca$^3$}
\address{Joachim Cuntz, Mathematisches Institut, Einsteinstr.62, 48149
M\"unster, Germany\\Christopher Deninger, Mathematisches Institut,
Einsteinstr.62, 48149 M\"unster, Germany\\Marcelo Laca, Mathematics
and Statistics, University of Victoria PO BOX 3060 STN CSC,
Victoria, B.C. Canada V8W 3R4} \email{cuntz@uni-muenster.de,
deninger@uni-muenster.de, laca@uvic.ca}
\thanks{{$^1$}Research supported by DFG through
CRC 878 and by ERC through AdG 267079,{$^2$}Research supported by
DFG through CRC 878, {$^3$}Research supported by NSERC and PIMS}
\subjclass[2000]{Primary: 22D25, 46L89, 11R04, 11M55} \keywords{ring
$C^*$-algebras, semigroup $C^*$-algebra, KMS-states, Bost-Connes
system, Toeplitz algebra, number fields, algebraic integers, partial
$\zeta$-function}
\begin{abstract}\noindent
We associate with the ring $R$ of algebraic integers in a number
field a C*-algebra $\cT[R]$. It is an extension of the ring
C*-algebra $\cA[R]$ studied previously by the first named author in
collaboration with X.Li. In contrast to $\cA[R]$, it is functorial
under homomorphisms of rings. It can also be defined using the left
regular representation of the $ax+b$-semigroup $R\rtimes R^\times$
on $\ell^2 (R\rtimes R^\times )$.

The algebra $\cT[R]$ carries a natural one-parameter automorphism
group $(\sigma_t)_{t\in\Rz}$. We determine its KMS-structure. The
technical difficulties that we encounter are due to the presence of
the class group in the case where $R$ is not a principal ideal
domain. In that case, for a fixed large inverse temperature, the
simplex of KMS-states splits over the class group. The ``partition
functions'' are partial Dedekind $\zeta$-functions. We prove a
result characterizing the asymptotic behavior of quotients of such
partial $\zeta$-functions, which we then use to show uniqueness of
the $\beta$-KMS state for each inverse temperature $\beta\in(1,2]$.
\end{abstract}\maketitle

\section{Introduction}
Let $R$ be the ring of algebraic integers in a number field $K$, let
$R^\times=R\backslash \{0\}$ be its multiplicative semigroup and
$R\rtimes R^\times$ its $ax+b$-semigroup. In the present paper we
study the C*-algebra generated by the left regular representation of
the semigroup $R\rtimes R^\times$ on the Hilbert space $\ell^2
(R\rtimes R^\times )$, and its KMS-structure for a natural
one-parameter automorphism group. In the first part of the paper we
analyze the structural properties of the C*-algebra. We show that it
can be described as a universal C*-algebra defined by generators and
relations. Since the left regular C*-algebra of a semigroup is often
called its Toeplitz algebra we denote this universal algebra by
$\cT[R]$. The relations are closely related to those characterizing
the ring C*-algebra $\cA[R]$ studied in \cite{Cun}, \cite{CuLi},
\cite{CuLi2}. This is corresponds to the fact that $\cA[R]$ is
generated by the natural representation of $R\rtimes R^\times$ on
$\ell^2 (R)$ rather than on $\ell^2 (R\rtimes R^\times)$. Recall
that the generators for $\cA[R]$ are unitaries $u^x,\, x\in R$ and
isometries $s_a,\, a\in R^\times$ satisfying the following
relations:

\begin{itemize}
  \item[(a)] The $u^x$ and the $s_a$ define representations of the
  additive group $R$ and of the multiplicative semigroup $R^\times$,
  respectively,
  (i.e. $u^xu^y=u^{x+y}$ and $s_as_b=s_{ab}$) and moreover we
  require the relation $s_a u^x=u^{ax}s_a$ for all $x\in R,\,a\in
  R^\times$ (i.e the $u^x$ and $s_a$ together give a representation
  of the $ax+b$-semigroup $R\rtimes R^\times$).
  \item[(b)] For each $a\in R^\times$ one has $\sum_{x\in
  R/aR}u^xs_as^*_au^{-x}=1$.
\end{itemize}

This algebra was shown to be purely infinite and simple in
\cite{Cun}, \cite{CuLi}. It has different representations in terms
of crossed products for actions on spaces of finite or infinite
adeles for $K$, \cite{CuLi2}.

Now, to obtain a presentation of $\cT[R]$ we essentially have to
relax, in this presentation of $\cA[R]$, condition (b) to the weaker
condition $\sum_{x\in R/aR}u^xs_as^*_au^{-x}\leq 1$. This
modification in the relations is sufficient to characterize the
algebra $\cT[R]$ in the case where $R$ is a principal ideal domain.
We are however especially interested precisely in the the situation
where this is not the case, i.e. where the number field $K$ has
non-trivial class group. To treat this case adequately we have to
impose certain conditions on the range projections of the isometries
$s_a$. The most efficient way to formalize these conditions is to
use projections associated with ideals in $R$ as additional
generators and to describe their relations. We mention that a
description of the C*-algebra generated by the left regular
representation of a cancellative semigroup by analogous generators
and relations had been discussed before also by X. Li, \cite{Li},
Appendix A2, see also \cite{peebles}, Chapter 4 for a specific
example.

An important role in our analysis of $\cT[R]$ is played by a
canonical maximal commutative subalgebra. Its Gelfand spectrum $Y_R$
can be understood as a completion, for a natural metric, of the
disjoint union $\bigsqcup R/I$ over all non-zero ideals $I$ of $R$.
It contains the profinite completion $\hat{R}$ of $R$ (which is the
spectrum of the analogous commutative subalgebra of $\cA[R]$). It is
important to note that the algebra $\cT[R]$ is functorial for
homomorphisms between rings while $\cA[R]$ is not. This is reflected
in the striking fact that the construction $R\mapsto Y_R$ is
contravariant under ring homomorphisms rather than covariant as one
might expect. An inclusion of rings $R\subset S$ induces a
surjective map $Y_S\to Y_R$.  The same holds for the locally compact
version of $Y_R$ (corresponding to a natural stabilization of
$\cT[R]$) which plays the role of the locally compact space of
finite adeles.

Especially important for us is a natural one-parameter group
$(\sigma_t)_{t\in\Rz}$ of automorphisms of $\cT[R]$. It is closely
related to Bost-Connes systems \cite{BoCo} and to Dedekind
$\zeta$-functions. In special cases it had been considered before
in \cite{Cun}, \cite{LaRa}.

The Toeplitz algebra for the semigroup $\Nz\rtimes \Nz^\times$ -
which is very closely related to the Toeplitz algebra $\cT[\Zz]$ for
the ring $\Zz$ in the sense of the present paper - has been analyzed
in \cite{LaRa}. In particular it was found in that paper that the
canonical one-parameter automorphism group on this algebra has an
intriguing KMS-structure. There is a phase transition at $\beta =2$
with a spontaneous symmetry breaking. In the range $1\leq\beta\leq
2$ there is a unique KMS-state while for $\beta>2$ there is a family
of KMS-states labeled by the probability measures on the circle and
with partition function the Riemann $\zeta$-function.

It turns out that, for our Toeplitz algebra, the KMS-structure is
similar, but quite a bit more intricate. We show in Theorem 6.7 that
for $\beta$ in the range $1\leq\beta\leq 2$ (with $\beta=1$ playing
a special role) there is a unique KMS state. The essential new
feature which is also the source of the main technical difficulties
in this paper is the presence of the class group, in the case where
$R$ is not a principal ideal domain. Our proof for the uniqueness of
the KMS-state requires a delicate estimate of the asymptotics of
partial Dedekind $\zeta$-functions for different ideal classes, see
Theorem \ref{den}. This theorem seems to be new and of independent
interest. We include the proof in the appendix.

For $\beta >2$ we obtain a splitting of the KMS states over the
class group $\Gamma$ for the number field $K$. The KMS states for
each $\beta$ in this range are labeled by the elements $\gamma\in
\Gamma$, but moreover also by traces on a crossed product
$\cC(\Tz^n)\rtimes R^*$ ($n$ being the degree of our field
extension) by an action (which depends on $\gamma$) of the group
$R^*$ of units of $R$. For a precise statement see Theorem 7.3. The
partition functions are the partial Dedekind $\zeta$-functions
$\zeta_\gamma$ associated with the ideal classes $\gamma$ for $K$.

In section \ref{gs} we determine the ground states. We find a
situation which is similar to the one for the KMS states in the
range $\beta>2$. The ground states are labeled by the states of a
certain subalgebra of $\cT[R]$.

We mention that our methods also immediately yield the KMS-structure
of the much simpler, but in the case of a non-trivial class group
still interesting, C*-dynamical system that one obtains from the
Toeplitz algebra of the multiplicative semigroup $R^\times$ (i.e.
the C*-algebra generated by the left regular action of this
semigroup on $\ell^2(R^\times)$) with the analogous one-parameter
automorphism group, see Remark \ref{Rtimes}.

When we restrict to the case of a trivial class group, all our
arguments become very simple indeed and can be used to get a simpler
approach to the results in \cite{LaRa}.

The presentation of $\cT[R]$ in terms of generators and relations
and the functoriality from section 3 had been obtained and announced
by the first named author before the present paper took shape. These
two results have since been generalized to more general semigroups
by Xin Li, \cite{LiSG}. The first named author is indebted to Peter
Schneider for very helpful comments.

After this paper was circulated, S. Neshveyev informed us that,
using the crossed product description of $\cT[R]$ in section 5 and
the methods developed in \cite{LaNeTr}, the KMS-structure on
$(\cT[R], (\sigma_t))$ could be linked to that of a Bost-Connes
system. The KMS-structure of this Bost-Connes system in turn was
determined in \cite{LaLaNe}. Together, this would give a basis for
an alternative approach to our results on KMS-states in sections 6
and 7.

We include a brief list of notations at the end of the appendix.

\section{The Toeplitz algebra for the $ax+b$-semigroup over $R$}\label{basic}

Let $R$ be the ring of algebraic integers in the number field $K$.
The $ax+b$-semigroup for $R$ is the semidirect product $R\rtimes
R^\times$ of the additive group $R$ and the multiplicative semigroup
$R^\times= R\setminus \{0\}$ of $R$. We can define the Toeplitz
algebra for the semigroup $R\rtimes R^\times$ as the C*-algebra
generated by the left regular representation of $R\rtimes R^\times$
on $\ell^2(R\rtimes R^\times)$. We set out to describe this
C*-algebra abstractly as a C*-algebra given by generators and
relations.

\bdefin We define the C*-algebra $\cT[R]$ as the universal
C*-algebra generated by elements $u^x, x\in R$, $s_a,\, a \in
R^\times$, $e_I$, $I$ a non-zero ideal in $R$, with the following
relations
\begin{itemize}
  \item[Ta:] The $u^x$ are unitary and satisfy $u^xu^y=u^{x+y}$, the
  $s_a$ are isometries and satisfy $s_as_b=s_{ab}$. Moreover we
  require the relation $s_a u^x=u^{ax}s_a$ for all $x\in R,\,a\in
  R^\times$.
  \item[Tb:] The $e_I$ are projections and satisfy $e_{I\cap J}=e_Ie_J$, $e_R=1$.
  \item[Tc:] We have $s_ae_Is_a^* = e_{aI}$.
  \item[Td:] For $x\in I$ one has $u^xe_I = e_Iu^x$, for $x\notin I$
  one has $e_Iu^xe_I=0$.
\end{itemize}\edefin
The first condition Ta simply means that the $u^x$ and $s_a$ define
a representation of the semigroup $R\rtimes R^\times$. We will see
below that $\cT[R]$ is actually isomorphic to the Toeplitz algebra
for the $ax+b$-semigroup $R\rtimes R^\times$, see Corollary \ref{416}.

In the following, ideals in $R$ will always be understood to be
non-zero.

\bremark In the case where $R$ is a principal ideal domain, the
axioms can be reduced considerably. In fact, in that case, the
projections $e_I$ are not needed to describe $\cT[R]$ by generators
and relations (they are all of the form $s_as^*_a$) and conditions
Tb, Tc and Td can be replaced by the single very simple condition

$$\sum_{x\in R/aR}u^xs_as^*_au^{-x}\leq 1$$ Note that this inequality
is a consequence of Tc and Td. In fact, by Tc one has
$e_{aR}=s_as_a^*$ and by Td the projections $u^xe_{aR}u^{-x}$, $x\in
R/aR$ are pairwise orthogonal.\eremark

\bremark\label{rem1} (a) The elements $s_a$ and $s_b^*$ commute if
and only if $a$ and $b$ are relatively prime (i.e. $aR+bR=R$).
(Proof: $s_b^*s_a = s_as_b^*$ iff $s_bs_b^*s_as_a^*=
s_bs_as_b^*s_a^*$ iff $e_{aR}e_{bR} = e_{abR}$. Then use the fact, established below using explicit representations of $\cT[R]$, that $e_I=e_J\Rightarrow I=J$)\mn (b) From
condition Td it follows that $e_Iu^xe_J=0$ if $x\notin I+J$ and that
$e_Iu^xe_J=u^{x_1}e_{I\cap J}u^{x_2}$ if there are $x_1\in I$ and
$x_2\in J$ such that $x=x_1+x_2$.\eremark

Let us derive a few more consequences from the axioms Ta - Td. From
the projections $e_I$ we can form associated projections. For each
ideal $I$ in $R$ set
$$
f_I = \sum_{x\in R/I} u^x e_I u^{-x}
$$
(note that $u^x e_I u^{-x}$ is well defined for $x\in R/I$, since
$u^{x+i} e_I u^{-x-i}=u^x e_I u^{-x}$ for $i\in I$, and that the
$u^x e_I u^{-x}$ are pairwise orthogonal for different $x\in R/I$).
For each prime ideal $P$ and $n\in\Nz$ set
$$
\varepsilon_{P^n} = f_{P^{n-1}} - f_{P^{n}}
$$
Moreover, for an ideal $I=P_1^{k_1}P_2^{k_2}\ldots P_n^{k_n}$ with
$P_1,P_2,\cdots P_n$ distinct primes, set
$$
\varepsilon_I = \varepsilon_{P_1^{k_1}}\varepsilon_{P_2^{k_2}}\cdots
\varepsilon_{P_n^{k_n}}
$$
\blemma\label{prop} The $e_I,f_I,\ve_I$ have the following
properties:
\begin{itemize}
  \item[(a)] For any two ideals $I$ and $J$ in $R$ one has
  $$e_If_J =\sum_{x\in I/(I\cap J)}u^xe_{I\cap J}u^{-x} \qquad
  f_If_J =\sum_{x\in (I+J)/(I\cap J)}u^xe_{I\cap J}u^{-x}$$
  \item[(b)] If $I$ and $J$ are relatively prime, then
  $f_If_J=f_{IJ}$. If $I\subset J$, then $f_If_J =f_I$.
  \item[(c)] If $I$ and $J$ are relatively prime, then
  $\ve_I\ve_J=\ve_{IJ}$. If $I$ and $J$ have a common prime divisor but occuring with different multiplicities, then $\ve_I\ve_J=0$.
  \item[(d)] The family of projections
  $\{e_I,f_I,\ve_I \big|\,I \,\mbox{an ideal in} \,R\}$ is commutative.
  \item[(e)] $u^x f_Iu^{-x}=f_I$ and $u^x
  \ve_Iu^{-x}=\ve_I$ for all $x\in R$.
\end{itemize}
\elemma \bproof (a) Obvious from Remark \ref{rem1}.

(b) is a special case of the formula under (a).

(c) follows from the definition together with the fact that
$\ve_{P^n}\ve_{P^m}=0$ for a prime ideal $P$ and $n\neq m$.

(d) It follows from (a) that the $e_I$ and $f_I$ form a commutative
family. However the $\ve_I$ are defined as products of differences
of certain $f_J$.

(e) follows directly from the definition.\eproof

\section{Functoriality of $\cT[R]$ for injective homomorphisms of
rings}

We assume that we have an inclusion $R\subset S$ of rings of
algebraic integers. We are going to show that this induces an
(injective) homomorphism $\kappa: \cT[R]\to\cT[S]$. Denote by
$s_a,u^x,e_I$ the generators of $\cT[R]$ and by
$\bar{s}_a,\bar{u}^x,\bar{e}_I$ the generators of $\cT[S]$.

The homomorphism $\kappa$ will map $s_a$ to $\bar{s}_a$, $u^x$ to $
\bar{u}^x$ and it is clear that this respects the relations Ta. With
an ideal $I$ in $R$ we associate the ideal $IS$ in $S$ and we define
$\kappa (e_I)=\bar{e}_{IS}$. It is then clear that relation Tc is
also respected. The fact that Tb and Td are respected follows from
the following elementary (and well-known) lemma.

\blemma In the situation above one has for ideals $I,J$ in R:
\begin{itemize}
  \item[(a)] $IS\cap R=I$
  \item[(b)] $IS\cap JS=(I\cap J)S$
\end{itemize}\elemma
\bproof Both statements can be proved in an elementary way using the
unique decomposition of $I$ and $J$ into prime ideals in $R$, cf.
\cite{Neukirch}, p. 45 and p. 52, Exercise 1. The statements also
follow from the fact that $S$ is a flat (even projective) module
over $R$, see \cite{BourbakiCA}, Chap. I {\S}2.6 Prop. 6 and
Corollary. \eproof

Summarizing, we obtain

\bprop Let $R$ and $S$ be the rings of algebraic integers in the
number fields $K$ and $L$, respectively. Then any injective
homomorphism $\alpha :R\to S$ induces naturally a homomorphism
$\cT[R]\to \cT[S]$.\eprop

It follows from Theorem \ref{inj}
 below that this homomorphism is also injective.
\section{The canonical commutative subalgebra}\label{D}
We denote by $\bar{\cD}$ the C*-subalgebra of $\cT[R]$ generated by
all projections of the form $u^xe_Iu^{-x}$, $x\in R$, $I$ a non-zero
ideal in $R$. It follows from Remark \ref{rem1} (b) that this
algebra is commutative. In fact, the elements $e^x_I:= u^x e_I
u^{-x}$  satisfy
 \begin{equation*}
 e^x_I e^y_J = \begin{cases} 0 & \text{ if } (x+I) \cap (y+J) = \emptyset\\
e^z_{I\cap J} &  \text{ if } z \in (x+I) \cap (y+J)
\end{cases}
  \end{equation*}
and thus linearly span a dense $*$-subalgebra of $\bar \cD$. The
algebra $\bar{\cD}$ also obviously contains the elements of the form
$\ve_I$ defined above.

\blemma\label{elem} \hspace*{-3em}\begin{itemize}
     \item[(a)] If $d\in\bar{\cD}$, then $s_ads_a^*$ and $u^xdu^{-x}$ are
     in $\bar{\cD}$ for all $a\in R^\times,\,x\in R$.
     \item[(b)] The set of linear combinations of elements of the
     form $s_a^* du^xs_b$ with $a,b\in R^\times,\;x\in R,\,d\in \bar{\cD}$
     is a dense $\star$-subalgebra in $\cT[R]$.
\end{itemize} \elemma

\bproof (a) This follows from the definition and conditions Ta -
Td.

(b) The set of elements of the form $s_a^* du^xs_b$ contains the
generators and, by (a),  is invariant under adjoints and multiplication from the
left or from the right by elements $s_c,s_c^*, u^y, e_I$ for $c\in
R^\times,\,y\in R$, $I$ an ideal in $R$ (the invariance under multiplication by $s_c^*$
on the right follows from the identity
$s_bs^*_c=s^*_cs_cs_bs^*_c=s^*_cs_bs_cs^*_c=s^*_ce_{bcR}s_b$).\eproof

Let $P$ be a prime ideal in $R$. We denote by $\bar{\cD}_P$ the
C*-subalgebra of $\bar{\cD}$ generated by all projections of the
form $u^xe_{P^n}u^{-x}$ with $x\in R$, $n=0,1,2,\ldots$. The
$\ve_{P^n}$ define pairwise orthogonal projections in $\bar{\cD}_P$.
We define projections in $\bar{\cD}_P$ by

$$\delta_{P^n}^x = u^x e_{P^n} \ve_{P^{n+1}}u^{-x}= u^x e_{P^n}
u^{-x}\ve_{P^{n+1}}, \; x\in R/P^{n}
$$
They are pairwise orthogonal since $\ve_{P^n}$ and $\ve_{P^m}$ are
orthogonal for $n\neq m$ and since the $u^xe_{P^n}u^{-x}$ are
pairwise orthogonal. In our definition we allow for $n=0$ so that
$\delta^x_{P^0}= \ve_P$. Note that, by Lemma \ref{min} below, the
$\delta^x_{P^0}= \ve_P$ are all non-zero.

\blemma\label{delta} One has
$$
\delta_{P^n}^0 = e_{P^n} - \sum_{x\in
P^n/P^{n+1}}u^xe_{P^{n+1}}u^{-x} \qquad\mbox{and}\qquad
\ve_{P^{n+1}} = \sum_{x\in R/P^{n}} \delta_{P^n}^x
$$
\elemma \bproof \bglnoz\delta^0_{P^n} = e_{P^n}\ve_{P^{n+1}} =
e_{P^n}\left(\sum_{x\in P^{n-1}/P^n} u^x e_{P^n}u^{-x} - \sum_{y\in
P^n/P^{n+1}} u^y e_{P^{n+1}}u^{-y} \right) \\[4pt] = e_{P^n} - \sum_{y\in
P^n/P^{n+1}} u^y e_{P^{n+1}}u^{-y} \eglnoz The second identity is
obvious from either formula for the $\delta^x_{P^n}$. \eproof

\blemma\label{DP} The algebra $\ve_{P^n}\bar{\cD}_P$ is
finite-dimensional and isomorphic to $\cC(R/P^{n-1})$. For each
$\{x\}$ in $\cC(R/P^{n-1})$, the projection $\,\delta_{P^{n-1}}^x =
u^xe_{P^{n-1}}\ve_{P^n}u^{-x},\,x\in R/P^{n-1}$ is minimal in this
algebra and corresponds to the characteristic function of $\{x\}$.
The isomorphism $\ve_{P^n}\bar{\cD}_P\cong \cC(R/P^{n-1})$ is
compatible with the natural action of the additive group $R$ on
these two algebras.

For each $k\leq n$ we have $$\ve_{P^{n+1}}e_{P^k}=\sum_{x\in
P^k/P^{n}}\delta^x_{P^n}$$

Let $G_k$ denote the (finite-dimensional) C*-subalgebra of
$\bar{\cD}_P$ generated by the projections $1,u^xe_{P^i}u^{-x}$,
$i=1,\ldots ,k,\,x\in R/P^i$. The map $$G_k\backepsilon z\mapsto
(z\ve_P,z\ve_{P^2},\ldots,z\ve_{P^{k+1}})$$ defines an isomorphism
$G_k\to \bigoplus_{n\leq\, k+1}\ve_{P^n}\bar{\cD}_P$. \elemma

\bproof For $k\geq n$, since $e_{P^k}\leq f_{P^n}$, we have
$e_{P^k}\ve_{P^n}=0$ and, since $u^x$ commutes with $\ve_{P^n}$,
also $u^xe_{P^k}u^{-x}\ve_{P^n}=0$ for such $k$.

On the other hand if $k\leq n-1$, then $e_{P^k}u^xe_{P^{n-1}}=0$ for
$x\notin P^k$ and $e_{P^k}u^xe_{P^{n-1}}=u^xe_{P^{n-1}}$ for $x\in
P^k$. Applying this to the product $e_{P^k}\delta^x_{P^n}=e_{P^k}u^x
e_{P^n}u^{-x}\ve_{P^{n+1}}$ we see that this expression vanishes for
$x\notin P^k$ and equals $\delta^x_{P^n}$ for $x\in P^k$.

The last assertion then is an immediate consequence. \eproof

Denote by $\cD_P$ the ideal in $\bar{\cD}_P$ generated by the
$\ve_{P^n}$. Lemma \ref{DP} shows that $\cD_P\cong \bigoplus
\cC(R/P^n)$. Since the union of the subalgebras $G_k$ is dense in
$\bar{\cD}_P$, the last statement in this lemma also shows that
$\cD_P$ is an essential ideal in $\bar{\cD}_P$.

Let $\iota: \cC(R/P^{n})\to \cC(R/P^{m})$ denote the homomorphism
induced by the quotient map $R/P^m\to R/P^{n}$ for $m>n$.

\blemma The C*-algebra $\bar{\cD}_P$ is isomorphic to the subalgebra
of the infinite product
$$ \prod_{n=0}^\infty \cC(R/P^n )$$
given by the ``Cauchy sequences'' $(d_n)$ (by this we mean that for
each $\ve
>0$ there is $N>0$ such that $\|\iota (d_n) - d_m\| <\ve$ for all
$n,m$ such that $N\leq n \leq m$).\elemma

\bproof The map $\bar{\cD}_P\ni z\mapsto (\ve_{P^n}z)\in
\prod_{n=0}^\infty \cC(R/P^n )$ is injective since $\cD_P$ is
essential. Each element of the form $u^xe_{P^k}u^{-x}$ is mapped,
according to Lemma \ref{DP} to the sequence $$(0,\ldots
,0,\delta^x_{P^k},\iota (\delta^x_{P^k}),
\iota^2(\delta^x_{P^k}),\ldots)$$ Thus the images of these
projections generate, together with the images of the $\delta^x_{P^k}$, the algebra of all ``Cauchy-sequences''.\eproof

\blemma\label{specD} There is an exact sequence
$$ 0 \to \cD_P \to \bar{\cD}_P \to \cC \to 0$$
where the Gelfand spectrum $\Spec \cD_P$ equals $\bigsqcup R/P^n$
and the Gelfand spectrum of the C*-algebra $\cC$ is the $P$-adic
completion $R_P=\mathop{\lim}\limits_{{
{\textstyle\longleftarrow}\atop{\scriptstyle n} }}R/P^n$ of
$R$.\elemma

\bproof The isomorphism $\cD_P\cong \bigoplus \cC(R/P^n)$ shows that
$\Spec \cD_P=\bigsqcup R/P^n$. In the quotient $\cC
=\bar{\cD}_P/\cD_P$ the images $(u^xe_{P^n}u^{-x})\,\breve{}$ of the
projections $u^xe_{P^n}u^{-x}$ satisfy the relation

$$\breve{e}_{P^n} = \sum_{x\in
P^n/P^{m}}(u^xe_{P^{m}}u^{-x})\,\breve{}$$

for $m\geq n$ (see Lemma \ref{delta}). Since $\cC$ is generated by
these images, $\cC=\mathop{\lim}\limits_{{
{\textstyle\longrightarrow}\atop{\scriptstyle n} }}\cC(R/P^n)$ and this
proves the second claim.\eproof

Now let $P_1, P_2, \ldots $ be an enumeration of the prime ideals in
$R$ (say ordered by increasing norm $|R/P_i|$) and, for each $n$, let $\cI_n$ be the set of
ideals of the form $I =P_1^{k_1}P_2^{k_2}\cdots P_n^{k_n}$ with all
$k_i \geq 0$. We write $\cD_n = \cD_{P_1}\cD_{P_2}\cdots \cD_{P_n}$
and $\bar{\cD}_n = \bar{\cD}_{P_1}\bar{\cD}_{P_2}\cdots
\bar{\cD}_{P_n}$. The $\bar{\cD}_{P_n}$ all commute and $\bar{\cD}$
obviously is the inductive limit of the $\bar{\cD}_n$.

We now use a natural representation of $\cT[R]$ on the following
Hilbert space $H_R$:

$$ H_R = \bigoplus_{I \,\textrm{ideal in}\, R} \ell^2 (R/I) $$

Note that $H_R$ is isomorphic to the infinite tensor product
$\bigotimes_P H_P$ where the tensor product is taken over all prime
ideals $P$ in $R$ and $H_P=\bigoplus \ell^2 (R/P^n)$ with ``vacuum
vector'' the standard unit vector in the one-dimensional space
$\ell^2 (R/R)$.\mn

$\cT[R]$ acts on $H_R$ in the following way:
\begin{itemize}
  \item The unitaries $u^x$, $x\in R$ act componentwise on $\ell^2(R/I)$ in
  the natural way.
  \item The isometries $s_a$ act through the composition:
  $\ell^2(R/I)\cong \ell^2 (aR/aI) \hookrightarrow \ell^2 (R/aI)$.
  \item The projection $e_J$ is represented by the orthogonal
  projection onto the subspace
  $ H = \bigoplus_{I\subset J} \ell^2 (J/I) $ of $H$.
\end{itemize}
It is easy to check that this assignment respects the relations
between the generators and thus defines a representation $\mu$ of
$\cT[R]$. One has

\blemma\label{min} Let $I=P_1^{k_1}P_2^{k_2}\cdots P_n^{k_n}$ with
all $k_i\geq 1$, and $x_1,x_2,\ldots, x_n \in R$. Then $\mu
(\delta^0_{P_1^{k_1}})$ acts on the subspace $\ell^2(R/I)$ of $H_R$
as the orthogonal projection onto the subspace
$\ell^2(P_1^{k_1}/I)$. Thus
$\mu(\delta^0_{P_1^{k_1}}\delta^0_{P_2^{k_2}}\cdots
\delta^0_{P_n^{k_n}})$ acts on this subspace as the orthogonal
projection onto the one-dimensional subspace $\ell^2(I/I)$ and $\mu(
\delta^{x_1}_{P_1^{k_1}}\delta^{x_2}_{P_2^{k_2}}\cdots
\delta^{x_n}_{P_n^{k_n}})$ acts as the orthogonal projection onto
the one-dimensional subspace $\ell^2((I+z)/I)$ where $z$ is the
unique element in $\bigcap_i (P_i^{k_i}+x_i)/I$.\elemma

\bproof This follows from the definition of $\mu (e_{P_1^{k_1}})$
and the fact that $\ve_{P_1^{k_1+1}}=1$ on $\ell^2(R/I)$ (recall
that, by definition
$\delta^0_{P_1^{k_1}}=e_{P_1^{k_1}}\ve_{P_1^{k_1+1}}$).\eproof

\blemma\label{prodel} For an ideal $I=P_1^{k_1}P_2^{k_2}\cdots P_n^{k_n}$ in $\cI_n$
and $x\in R/I$ consider the projection
$$\delta_{I,n}^x = u^x\delta^0_{P_1^{k_1}}\delta^0_{P_2^{k_2}}\cdots
\delta^0_{P_n^{k_n}}u^{-x}, \; x\in R/I
$$
These projections are non-zero according to Lemma \ref{min}. (Note
also that our definition of $\delta_{I,n}^x$ depends on the fact
that we consider $I$ as an element of $\cI_n$!) Then
$$
\delta_{I,n}^x=u^x e_I \ve_{IP_1P_2\cdots P_n}
u^{-x}\qquad\mbox{and}\qquad\ve_{IP_1P_2\cdots P_n} = \sum_{x\in
R/I} \delta_{I,n}^x
$$
\elemma

\bproof The identity $\delta_{I,n}^0=e_I \ve_{IP_1P_2\cdots P_n}$
follows from the equations $e_I=e_{P_1^{k_1}}e_{P_2^{k_2}}\ldots
e_{P_n^{k_n}}$ and $\ve_{IP_1P_2\cdots
P_n}=\ve_{P_1^{k_1+1}}\ve_{P_2^{k_2+1}}\ldots \ve_{P_n^{k_n+1}}$
(see Lemma \ref{prop}). The second identity follows from the first
one in combination with the corresponding identity in Lemma
\ref{delta}.\eproof

We have now shown that $\cD_n=\cD_{P_1}\cD_{P_2}\ldots\cD_{P_n}$ is
isomorphic to the tensor product $\bigotimes_{1\leq i\leq
n}\cD_{P_i}$ with minimal projections the $\delta_{I,n}^x$, $I\in
\cI_n$. Thus $\cD_n\cong \bigoplus_{I\in \cI_n}\cC (R/I)$ and the
spectrum of $\cD_n$ is $\bigsqcup_{I\in \cI_n} R/I$ (this is the
cartesian product of the spectra $\bigsqcup_{k\geq 0} R/P_i^k$ of
the $\cD_{P_i}$).

\bcor $\cD_n$ is an essential ideal in $\bar{\cD}_n$. $\bar{\cD}_n$
is isomorphic to $\bar{\cD}_{P_1}\otimes
\bar{\cD}_{P_2}\ldots\otimes\bar{\cD}_{P_n}$ and $\bar{\cD}$ is
isomorphic to the infinite tensor product $ \bigotimes_P \bar{\cD}_P
$.\ecor

\bproof Consider the surjective homomorphism

$$\vp: \bar{\cD}_{P_1}\otimes
\bar{\cD}_{P_2}\ldots\otimes\bar{\cD}_{P_n}\to
\bar{\cD}_{P_1}\bar{\cD}_{P_2}\ldots\bar{\cD}_{P_n}=\bar{\cD}_n$$

which exists by the universal property of the tensor product. The
restriction of $\vp$ to the essential (see the comment after Lemma
\ref{DP}) ideal $\cD_{P_1}\otimes\cD_{P_2}\ldots\otimes\cD_{P_n}$ is
an isomorphism. Thus $\vp$ is an isomorphism. \eproof

From Lemma \ref{specD} it follows that, for each prime ideal $P$, we
have $\Spec \bar{\cD}_P = \Spec \cD_P\cup\Spec \cC = \bigsqcup_n
R/P^n\sqcup R_P$.\mn

For an ideal $I$ in $R$ and $x\in R/I$, we consider the projection
$e^x_I=u^xe_Iu^{-x}$. Since, by Remark \ref{rem1}(b), $e^x_Ie^y_J$
is either zero or equal to $e^z_{I\cap J}$ for $z\in (x+I)\cap
(y+J)$, the set of projections $\{e^x_I\tei\, I\,\mbox{ideal in }
R,\,x\in R/I\}$ is multiplicatively closed.\mn

In particular the set of projections $\{e^x_{P^n}\tei n=0,1,2,\ldots
,\,x\in R/P^n\}$ in $\bar{\cD}_P$ is multiplicatively closed and a
sequence $(\vp_k)$ of characters of $\bar{\cD}_P$ converges to a
character $\vp$ if and only if $\vp_k (e^x_{P^n}) \lori \vp
(e^x_{P^n})$ for each $x$ and $n$. To describe this topology in
terms of a metric we use the norm of an ideal. For an ideal $I$ in
$R$ we denote by $N(I)=|R/I|$ the number of elements in $R/I$. The
topology on $\Spec \bar{\cD}_P$ is described by the metric
$d_\alpha$ defined for any choice of $\alpha >1$ by

$$d_\alpha (\vp ,\psi )= \sum_{n\geq 0,\, x\in
R/P^n}N(P^n)^{-\alpha}|(\vp -\psi )(e^x_{P^n})|
$$

The topology on the first component $\bigsqcup R/P^n$ of $\Spec
\bar{\cD}_P$ is the discrete topology. The elements in this
component are the characters $\eta^x_{P^n}$ uniquely defined by the
condition

$$\eta^x_{P^n}(\delta^x_{P^n})=1 $$

The topology on the second component $R_P$ of $\Spec \bar{\cD}_P$ is
the usual ultrametric topology and finally a sequence
$\eta^{x_k}_{P^{n_k}}$ converges to an element $\eta_z$ in the
second component determined by $z\in R_P$ if and only if
$n_k\to\infty$ and there is $N>0$ such that (using a self-explanatory notation for the image of $z$ in the quotient) $z/P^{n_k}=x_{n_k}$ for
$k\geq N$.\mn

Now, since $\bar{\cD} \cong \bigotimes_P \bar{\cD}_P$, every
character $\vp$ of $\bar{\cD}$ is of the form
$\vp=\bigotimes_P\vp_P$ with each $\vp_P$ either of the form
$\eta_{P^n}^x$ for $n\in\Nz,\,x\in R/P^n$ or $\eta_z$ with $z\in
R_P$.

Again, the set $\{e^x_{I}\tei\, I\mbox{ ideal in }R,\,x\in R/I\}$ of
projections in $\bar{\cD}$ is multiplicatively closed and generates
$\bar{\cD}$. Thus a sequence $(\vp_n)$ of characters converges to
$\vp$ if and only if $\vp_n(e^x_I)\to\vp(e^x_I)$ for each ideal $I$
and $x\in R/I$. This topology is described by the metric $d_\alpha$
defined for any choice of $\alpha >1$ by

$$d_\alpha (\vp ,\psi )= \sum_{I\,\mbox{\footnotesize ideal in}\,R, \, x\in
R/I}N(I)^{-\alpha}|(\vp -\psi )(e^x_{I})| $$

We consider special elements $\eta_I^x$ labeled by $\bigsqcup_IR/I$.
For $I=P_1^{k_1}P_2^{k_2}\ldots P_n^{k_n}$ and $x\in R/I$,
$\eta_I^x$ is defined as

$$\eta_I^x =\bigotimes_{i=1,2,\ldots}\eta_{P_i^{k_i}}^{x_i}$$

Here, $k_i$ is defined to be 0 if $P_i$ does not occur in the prime
ideal decomposition of $I$ and $x_i=x/P_i^{k_i}$.

\bprop\label{49} The subset $\bigsqcup_IR/I$ is dense in $\Spec \bar{\cD}$.
Thus $\Spec \bar{\cD}$ is the completion of  $\bigsqcup_IR/I$ for
the metric $d_\alpha$ described above.\eprop

\bproof It is clear from the discussion above that the set of
elements of the form $\bigotimes_i\eta^{x_i}_{P_i^{k_i}}$ is dense.
We show that each element $\eta =\bigotimes_i\eta^{x_i}_{P_i^{k_i}}$
can be approximated by the $\eta_I^x$. In fact, if
$I=P_1^{k_1}P_2^{k_2}\ldots P_n^{k_n}$ and $x\in R$ is such that
$x/P_i^{k_i}=x_i,\, i=1,\ldots n$, then $\eta (e^y_J)=\eta_I^x
(e^y_J)$ for each ideal $J$ that contains only $P_1,\ldots , P_n$ in
its prime ideal decomposition and for any $y\in R/J$.\eproof

\bremark This description of $\Spec \bar{\cD}$ also clarifies the
wrong-way functoriality in $R$ of the construction. Assume that we
have field extensions $\Qz\subset K\subset L$ and corresponding
inclusions $\Zz\subset R\subset S$ of the rings of algebraic integers.
Denote by $Y_R$ and $Y_S$ the spectra of the corresponding commutative
subalgebras $\bar{\cD}_R$ and $\bar{\cD}_S$ in $\cT[R]$ and $\cT[S]$,
respectively. Thus $Y_R$ and $Y_S$ are completions of the metric spaces
$\bigsqcup_I R/I$ and $\bigsqcup_J S/J$, respectively.

With every character $\eta_J^x\in \bigsqcup_J S/J$ we can associate
a character $(\eta_J^x)'\in \Spec \bar{\cD}_R$ by defining
$(\eta_J^x)' (e_M^y)= \eta_J^x (e_{MS}^y)$ for an ideal $M$ in $R$
and $y\in R/M$. The map $\eta_J^x\to(\eta_J^x)'$ is obviously
contractive (up to a constant $n^\alpha$ with $n=[L:K]$) for the
metrics $d_\alpha$ and thus extends to a continuous map $\Spec
\bar{\cD}_S\to \Spec \bar{\cD}_R$. It is surjective, since the dense
subset $\bigsqcup_I R/I$ of $\Spec \bar{\cD}_R$ has a natural lift
to $\Spec \bar{\cD}_S$. In fact, one immediately checks that
$(\eta_{IS}^x)'=\eta_I^x$ for an ideal $I\subset R$ and $x\in R/I$.
\eremark

\blemma\label{inv} Let $a\in R^\times$ such that $aR=QL$ with $L\in
\cI_n$ and $Q$ relatively prime to each of the $P_1,\cdots,P_n$. Then,
for $I\in \cI_n$,
$$ s_a\delta_{I,n}^0 s_a^* = e_Q\delta_{LI,n}^{0} $$ In particular,
if $aI=bJ$ for two ideals $I$, $J$ in $\cI_n$ and $a,b\in R$, then
$s_a\delta_{I,n}^0s_a^*=s_b\delta^0_{J,n}s_b^*$.\elemma

\bproof Using induction, it suffices to show that, for $aR
=QP_1^{k_1}$ with $Q$ relatively prime to $P_1$,
$$s_a\delta^0_{P_1^{t_1}} s_a^* = e_Q\delta^0_{P_1^{t_1+k_1}}$$
This follows from the following computation

\bglnoz
s_a\delta^0_{P_1^{t_1}} s_a^* = s_a \left( e_{P_1^{t_1}} -
\sum_{x\in
P^{t_1}/P^{t_1+1}}u^xe_{P_1^{t_1+1}}u^{-x}\right)s_a^*\,=\,
e_{aP_1^{t_1}} -
\sum_{x\in P_1^{t_1}/P_1^{t_1+1}}u^{ax}e_{aP_1^{t_1+1}}u^{-ax}\\ =
e_Q\left( e_{P_1^{t_1+k_1}} - \sum_{x\in
P_1^{t_1}/P_1^{t_1+1}}u^{ax}e_{P_1^{t_1+k_1+1}}u^{-ax}\right)
\eglnoz

For the last equality we use the fact that $u^{ax}$ commutes with
$e_Q$ and that $Q$ is relatively prime to $P_1$. \eproof

The dual $\widehat{K^\times}$ of the multiplicative group $K^\times$
acts by automorphisms $\alpha_\chi,\,\chi\in\widehat{K^\times}$ defined
by
$$\alpha_\chi (s_a)=\chi (a)s_a\quad \alpha_\chi (u^x)=u^x\quad
\alpha_\chi (e_I)=e_I$$ By Lemma \ref{elem} (b) the fixed point algebra $B$ is the
subalgebra of $\cT[R]$ generated by all $u^x,\,x\in R$ and $e_I$,
$I$ an ideal in $R$. Integration over $\widehat{K^\times}$ gives a
faithful conditional expectation $\cT[R]\to B$.

On $B$ the dual $\hat{R}$ of the additive group $R$ acts by
automorphisms $\beta_\chi$ given by
$$\beta_\chi (u^x) =\chi(x)u^x \qquad \beta_\chi (e_I)=e_I$$
The fixed point algebra for this action is $\bar{\cD}$. Again,
integration over the compact group $\hat{R}$ defines a faithful
conditional expectation $B\to \bar{\cD}$.

Composing these two expectations we obtain the faithful conditional
expectation $E: \cT[R]\to \bar{\cD}$ which we will use now. Note that, for a typical element $z=s_a^*du^xs_b$, $E(z)=0$, unless $a=b, x=0$ in which case $E(z)=s_a^*ds_a$.

\blemma\label{expect} Let
$$z = d + \sum_{i=1}^m s^*_{a_i}d_iu^{x_i}s_{b_i}$$ be an element of
$\cT[R]$ (cf. Lemma \ref{elem} (b)) such that for each $i$, $a_i\neq
b_i$ or $x_i\neq 0$ and such that $d,d_i\in \bar{\cD}_n$ for some
$n$. Let $n$ be large enough so that also the principal ideals
$a_iR, b_iR$ are in $\cI_n$ for all $i$. Let $\ve> 0$.

\begin{itemize}
  \item[(a)] There is a minimal projection $\delta$ in $\cD_n$ such
  that $\|d\delta\|=\|\delta d\delta\|\geq \|d\|-\ve$.
  \item[(b)] There is $k>0$ and a minimal projection $\delta '\in
  \cD_{n+k}$, $\delta '\leq\delta$ such that $\delta
  's^*_{a_i}d_iu^{x_i}s_{b_i}\delta '=0$ for all $i$.
   \item[(c)] For the projection $\delta '$ in (b) one
   has $\|\delta' z\delta'\|\geq \|E(z)\|-\ve$
\end{itemize}\elemma

\bproof (a) simply expresses the fact that $\cD_n$ is
essential.

(b) Let $\delta =\delta_{I,n}^y$, $I\in \cI_n$, $y\in R/I$. Using Lemma
\ref{inv} we may then choose

$$\delta '=u^y \delta_{I,n}^0\delta^0_{P_{n+1}^{t_1}}\cdots
\delta^0_{P_{n+k}^{t_k}}u^{-y}$$

such that for each $i$ the projections $s_{a_i}\delta's^*_{a_i}$ and
$u^{x_i}s_{b_i}\delta's^*_{b_i}u^{-x_i}$ are orthogonal. This
projection $\delta'$ will have the required properties.

(c) follows immediately from (a) together with (b) using the fact
that $E(z)=d$ and that $d\delta$ is just a multiple of $\delta$
($\delta$ is a minimal projection in $\cD_n$). \eproof

\btheo\label{inj} Let $\alpha :\cT[R]\to A$ be a *-homomorphism into
any C*-algebra $A$. The following are equivalent:
\begin{itemize}
  \item[(a)] $\alpha$ is injective.
  \item[(b)] $\alpha$ is injective on $\bar{\cD}$.
  \item[(c)] $\alpha$ is injective on $\cD_n$ for each $n$.
\end{itemize}\etheo

\bproof (c) implies (b) since  $\cD_n$ is essential in $\bar{\cD}_n$
for each $n$.

(b) $\Rightarrow$ (a): Let $h$ be a positive element in $\cT[R]$
with $\alpha (h)=0$ and $z$ a linear combination as in \ref{expect}
such that $\|h-z\|<\ve$. Let $\delta'$ be a projection as in
\ref{expect} (b) such that $\|\delta' z\delta'\|\geq \|E(z)\|-\ve$
(and such that $\delta'z\delta'$ is a multiple of $\delta'$). If
$\alpha (h)=0$, then $\|\alpha (\delta'z\delta')\|<\ve$ and thus
also $\|\delta'z\delta'\|<\ve$. It follows that $\|E(h)\|<2\ve$.
Since this holds for each $\ve$, $E(h)=0$ and, since $E$ is
faithful, $h=0$.\eproof

From this technical theorem we can derive the following important
corollaries \ref{414},\ref{416} and \ref{417}.

\begin{corollary}\label{414}
The representation $\mu$ of $\cT[R]$ on $\bigoplus _I \ell^2(R/I)$
is an isomorphism.
\end{corollary}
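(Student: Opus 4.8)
The plan is to deduce this at once from Theorem~\ref{inj}. Since $\mu$ is a $*$-homomorphism from $\cT[R]$ into $B(H_R)$ (indeed a representation), the statement ``$\mu$ is an isomorphism'' means precisely that $\mu$ is injective, and by the equivalence of (a) and (c) in Theorem~\ref{inj} it suffices to prove that $\mu$ is injective on the ideal $\cD_n$ for every $n$. So the proof will consist of reducing to this, and then checking it.

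For the reduction I would use the structure of $\cD_n$ already worked out above. By Lemma~\ref{prodel} and the discussion immediately following it, $\cD_n\cong\bigoplus_{I\in\cI_n}\cC(R/I)$ is a direct sum of finite-dimensional commutative C*-algebras whose minimal projections are exactly the $\delta_{I,n}^x$ with $I\in\cI_n$ and $x\in R/I$. Every closed ideal of such a direct sum is a sum of coordinate summands, so a $*$-homomorphism defined on $\cD_n$ is injective if and only if it does not annihilate any of these minimal projections. Hence the whole corollary boils down to checking that $\mu(\delta_{I,n}^x)\neq 0$ for all $I\in\cI_n$ and all $x\in R/I$.

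Finally, that nonvanishing is exactly what Lemma~\ref{min} provides: writing $I=P_1^{k_1}\cdots P_n^{k_n}$ with all $k_i\geq 1$ and $x_i$ for the class of $x$ modulo $P_i^{k_i}$, the operator $\mu(\delta_{I,n}^x)=\mu\bigl(u^x\delta_{P_1^{k_1}}^0\cdots\delta_{P_n^{k_n}}^0u^{-x}\bigr)$ acts on the summand $\ell^2(R/I)$ of $H_R$ as the orthogonal projection onto the one-dimensional subspace $\ell^2\bigl((I+z)/I\bigr)$, where $z$ is the unique element of $\bigcap_i(P_i^{k_i}+x_i)/I$; in particular it is a nonzero rank-one projection. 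Combining the three steps gives injectivity of $\mu$, hence the corollary.

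I do not expect a real obstacle here: the substantive work has been front-loaded into Theorem~\ref{inj} --- whose own proof, via the faithful conditional expectation $E$ and the separating projections $\delta'$ of Lemma~\ref{expect}, is where the effort actually lies --- and into the explicit computation of $\mu$ on the generators of $\bar{\cD}$. The only point deserving a careful sentence is the first reduction, namely recording that the condition one verifies, injectivity of $\mu$ on each $\cD_n$, is literally hypothesis (c) of Theorem~\ref{inj}, so that theorem applies with $A=B(H_R)$ and yields the conclusion immediately.
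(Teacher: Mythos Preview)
Your proof is correct and is exactly the paper's approach in expanded form: the paper's one-line proof reads ``The restriction of $\mu$ to each $\cD_n$ is injective by Lemma~\ref{min},'' which is precisely your three steps compressed. One small slip worth fixing: for a general $I\in\cI_n$ some exponents $k_i$ may be zero, so rather than invoking Lemma~\ref{min} directly (which is stated for $k_i\geq 1$) you should cite Lemma~\ref{prodel}, which already records that $\delta_{I,n}^x\neq 0$ for all $I\in\cI_n$.
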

\begin{proof} The restriction of $\mu$ to each $\cD_n$ is injective by Lemma 4.6.
   \end{proof}

Let $\cT\subset \cL(\ell^2(R\rtimes R^\times))$ denote the
C*-algebra defined by the left regular representation of the
semigroup $R\rtimes R^\times$ (cf. section \ref{basic}). Given an
ideal $I$ in $R$ we can define a projection $e'_I$ in
$\cL(\ell^2(R\rtimes R^\times))$ as the orthogonal projection on the
subspace $\ell^2(I\rtimes I^\times)\subset \ell^2(R\rtimes
R^\times)$. Denote by $u'^x$ and $s'_a$ the operators defined by the
left action of $R$ and $R^\times$ on $\ell^2(R\rtimes R^\times)$.
Then it is easy to check that the $u'^x$, $s'_a$ and $e'_I$ satisfy
the relations defining $\cT[R]$.

\blemma\label{frac}
\begin{itemize}
\item [(a)]Every ideal $I$ in $R$ can be written in the form
$\frac{a}{b}R\cap R$ with $a,b\in R^\times$.
\item [(b)]If $I =\frac{a}{b}R\cap R$, then $e_I =s_b^*s_as_a^*s_b$ and,
similarly, $e'_I = s_b^{'*} s_a' s_a^{'*} s_b'$.
\end{itemize}
\elemma

\bproof(a) Let $Q$ and $M$ be ideals such that $I,Q,M$ are
relatively prime and such that $IQ$, $QM$ are principal, say
$IQ=aR$, $QM=bR$. Then $bI=IQM=IQ\cap QM= aR\cap bR$.

(b) One has $bI = aR\cap bR$ and therefore $s_b
e_Is_b^*=s_as_a^*s_bs_b^*$. Since this uses only the relations
defining $\cT[R]$, it also holds in $\cT$.\eproof

This lemma shows that $e'_I\in\cT$, hence we obtain a natural
homomorphism $\cT[R]\to\cT$ by assigning $s_a \mapsto
s'_a,\,u^x\mapsto u'^{x},\, e_I\mapsto e'_I$.

\bcor\label{416} The natural map $\cT[R]\to \cT$ is an isomorphism.\ecor
\begin{proof}
The map is obviously surjective. To prove injectivity, suppose  $I
\in \cI_n$ and $x \in R$ are given. Let $Q\not\in\{P_1,
P_2,\cdots,P_n\}$ be a prime ideal in the ideal class $[I]\inv$ and
let $a $ be a generator  of the principal ideal $IQ$ (for the
existence of such a $Q$ see for instance \cite{Narkiewicz}, chapter
7, \S 2, Corollary 7). Since the exponents of $\{P_1,
P_2,\cdots,P_n\}$ in the prime factorization of $aR$ are identical
to those of $I$, the image of the projection $\delta_{I,n}^x$ fixes
the canonical basis vector $\xi_{(x,a)}\in\ell^2(R\rtimes R^\times
)$, so it does not vanish. Hence the natural map is injective on
$\cD_n$ for each $n$ and therefore injective by Theorem \ref{inj}.
\end{proof}

Denote, as above, by $Y_R$ the
spectrum of $\bar{\cD}$. The semigroup $R\rtimes R^\times$ acts on $Y_R$ in a natural way and
this action corresponds to the canonical action of $R\rtimes R^\times$ on
$\bar{\cD}$ by conjugation by the $u^x$ and the $s_a$. We use the definition of a semigroup crossed product as in \cite{LaRa1}, section 2, \cite{Li}, Appendix A1.

\bcor\label{417} The algebra $\cT[R]$ is canonically isomorphic to
the semigroup crossed product $(\cC(Y_R)\rtimes R)\rtimes
R^\times$.\ecor

\bproof The generators $e_I^0$ of $\cC (Y_R)$ together with the canonical generators of $R\rtimes R^\times$ in  $(\cC(Y_R)\rtimes R)\rtimes
R^\times$ satisfy the relations defining $\cT[R]$, hence they determine a surjective homomorphism $\cT[R]\to (\cC(Y_R)\rtimes R)\rtimes
R^\times$. This homomorphism is injective
on $\bar{\cD}$ and therefore injective by \ref{inj}.\eproof

\section{An alternative description of $\Spec \bar{\cD}$ and the dilation of
$\cT[R]$ to a crossed product by $K\rtimes K^*$}

We will give a parametrization of the spectrum of $\bar\cD$, along
the lines of that obtained for the case $R= \mathbb Z$ in
\cite{LaNe,LaRa}, and use it to realize $\cT[R]$ as a full corner in
a crossed product. Let $\af $ denote the ring of finite adeles over
$K$ and let $\rhat$ be the compact open subring of (finite) integral
adeles; their multiplicative groups are the finite ideles $\afs$ and
the integral ideles $\rhat^*$, respectively.

For each integral adele $a$ and each prime ideal $P$, let
$\epsilon_P(a)$ be the smallest nonnegative integer $n$ such that
the canonical projection of $a$ in $R/P^n$ is nonzero, and put
$\epsilon_P(a) = \infty$ if $a$ projects onto $0 \in R/P^n$ for
every $n$. If $a$ is a finite adele, then there exists $d \in R$
such that $da \in \hat{R}$, and we let $\epsilon_P(a) =
\epsilon_P(da) - \epsilon_P(d)$. This does not depend on $d$. The
group $\rhat^*$ acts by multiplication on $\af$ and the
corresponding orbit space $\af/\rhat^*$ factors as a restricted
infinite product over the set $\cP$ of prime ideals in $R$
 \[
\af /\hat R^* \cong \left\{ \big(\epsilon_P\big)_{P\in \cP} :
\epsilon_P\in \Zz\cup\{\infty\} \text{ and } \epsilon_P \geq 0
\text{ for almost all } P\in \cP\right\}
\]
under the map $a \mapsto  \big(\epsilon_P(a)\big)_{P\in \cP}$. This
product can be viewed as a space of {\em fractional superideals}.
The usual fractional ideals of $K$ appear as the elements $a\rhat^*
\in \af/\rhat^*$ such that $\epsilon_P(a) \in \Zz$ for every $P$ and
$\epsilon_P(a) = 0$ for all but finitely many $P$. The zero divisors
in $\af/\rhat^*$ correspond to sequences for which $\epsilon_P(a) =
\infty$ for some $P$. The elements in $\rhat/\rhat^*$ correspond to
superideals with nonnegative exponent sequences and are analogous to
the usual supernatural numbers (see e.g. wikipedia), in fact
indistinguishable from them as a space -- the distinction will only
arise when we consider the multiplicative action of $K^*$ on
additive classes. For each $a \in \af$ the additive subgroup
$a\rhat$ of $\af$ is invariant under the multiplicative action of
$\rhat^*$.

We will say that two pairs $(r,a)$ and $(s,b)$ in  $\af \times \af$
are equivalent if  $b \in a\rhat^*$ and $s -r \in a\rhat$ and we
will denote by $\omega_{r,a}$ the equivalence class of $(r,a)$.
Since equivalence classes are compact the quotient
\[
\oma  :=  \{ \omega_{r,a} | a\in  \af, r \in \af\}
\]
is a locally compact Hausdorff space, whose elements are pairs
$(r,a)$ with $a\in \af/\rhat^*$ and $r\in \af/a\rhat$.

 When $a$ and $b$ are superideals such that $\epsilon_P(a) \leq \epsilon_P(b)$ for every $P$
 we write $a\leq b$. In this case  $b\rhat \subset a\rhat$ and there is an
  obvious homomorphism  {\em reduction modulo $a$} of
 $\af/b\rhat $ to $ \af/a\rhat$; we will write
 $r(a)$ for the image  of $r\in \af/b\rhat$. When $I$ and $J$ are ideals of $R$ viewed as elements of $\af/\rhat^*$,
  then $I\leq J$ means $J \subset I$
 and the reduction defined above is the usual reduction of ideal classes
 $R/J \to R/I$.\mn

There is a natural action of $\kxkx$ on $\oma$ given by
\bgl\label{action} (m,k) \omega_{r,a} = \omega_{m + kr , ka}. \egl
The additive action $(m,0) \omega_{r,a} = \omega_{m+r,a}$ is by
straightforward addition of classes in $\af/a\rhat$ and  the
multiplicative action $k\,  (a\rhat^*) = (ka)\rhat^*$ on the second
component is also straightforward, but the multiplicative action on
the first component requires
 the homomorphism $\times k : \af/a\rhat \to k\af/ka\rhat= \af / ka\hat{R}$.

Since the set $\rhat$ of integral adeles  is a compact open subset
of $\af$, the subset
\[
\omr := \{ \omega_{r,a}\,|\, r \in \rhat, a\in  \rhat\}
\]
consisting of integral elements is compact open in $\oma$ and is
invariant under the action of $ \rxrx$.

\begin{proposition}
 The projection $\chf_{\omr}$ is full in $\cC_0(\oma) \rtimes \kxkx$ and
 there is a canonical isomorphism
 \[ \cC(\omr) \rtimes \rxrx \cong \chf_{\omr} \left(\cC_0(\oma) \rtimes
 \kxkx \right) \chf_{\omr}.\]
 \end{proposition}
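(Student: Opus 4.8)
The plan is to exhibit the full corner isomorphism by the standard ``enveloping action'' technique for semigroup crossed products, using the compact open set $\omr$ as the canonical transversal. First I would observe that $\kxkx$ decomposes the space: every $\omega_{r,a}\in\oma$ can be translated into $\omr$ by an element of $\kxkx$, because $a\in\afs$ can be scaled by some $k\in K^*$ to land in $\rhat$ and then $r$ can be shifted by an element of $K$ to land in $\rhat$; in other words $\oma = \kxkx\cdot\omr$. This shows that the ideal of $\cC_0(\oma)\rtimes\kxkx$ generated by $\chf_{\omr}$ contains all translates $(m,k)\chf_{\omr}(m,k)^*$ and hence all of $\cC_0(\oma)$; since $\kxkx$ generates the crossed product, $\chf_{\omr}$ is full.

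Next I would verify that the corner $\chf_{\omr}\bigl(\cC_0(\oma)\rtimes\kxkx\bigr)\chf_{\omr}$ is canonically the crossed product $\cC(\omr)\rtimes\rxrx$. The inclusion $\rxrx\hookrightarrow\kxkx$ together with $\cC(\omr)=\chf_{\omr}\cC_0(\oma)\chf_{\omr}$ and the fact that $\omr$ is $\rxrx$-invariant (noted just before the statement) gives covariant data for $\cC(\omr)\rtimes\rxrx$ inside the corner, hence a homomorphism $\cC(\omr)\rtimes\rxrx\to\chf_{\omr}(\cC_0(\oma)\rtimes\kxkx)\chf_{\omr}$. For surjectivity I would use that a dense spanning set of $\cC_0(\oma)\rtimes\kxkx$ is given by elements $f\,U_{(m,k)}$ with $f\in\cC_0(\oma)$; compressing by $\chf_{\omr}$ on both sides one gets $\chf_{\omr}f\,U_{(m,k)}\chf_{\omr}=\chf_{\omr}f\,((m,k)\chf_{\omr})\,U_{(m,k)}$, and the product of characteristic functions $\chf_{\omr}\cdot(m,k)\chf_{\omr}$ is supported on $\omr\cap(m,k)\omr$, which (using \eqref{action}) is nonempty only when $(m,k)$ already maps some integral element to an integral element; a short computation with superideals shows that in that case one may replace $(m,k)$ by an element of $\rxrx$ without changing the compressed element, so the corner is spanned by $\cC(\omr)\rtimes\rxrx$.

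For injectivity I would invoke faithfulness of the canonical conditional expectations: restriction of the expectation $\cC_0(\oma)\rtimes\kxkx\to\cC_0(\oma)$ to the corner is the expectation $\cC(\omr)\rtimes\rxrx\to\cC(\omr)$ under the above map, and since both are faithful the homomorphism is isometric on the fixed-point algebra and hence injective. Alternatively, since $\omr$ meets every $\kxkx$-orbit, the corner embedding is automatically faithful once fullness is known, by the Rieffel correspondence between ideals of a $C^*$-algebra and ideals of a full corner.

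The main obstacle I expect is the bookkeeping in the surjectivity step: one must check carefully, in the language of fractional superideals and the twisted multiplicative action $\times k:\af/a\rhat\to\af/ka\rhat$, that whenever $\omr\cap(m,k)\omr\neq\emptyset$ the group element $(m,k)\in\kxkx$ can be adjusted by a unit so as to lie in $\rxrx$ while fixing the relevant compressed partial isometry. Concretely this amounts to showing $\chf_{\omr}U_{(m,k)}\chf_{\omr}$ depends only on the $\rhat^*$-orbit data, i.e. to identifying the right coset representatives of $\rxrx$ in $\kxkx$ acting on $\omr$ — a routine but slightly delicate adele computation rather than a conceptual difficulty.
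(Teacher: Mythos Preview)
Your overall strategy---prove fullness from $\oma=\kxkx\cdot\omr$, build a covariant map into the corner, then check surjectivity and injectivity by hand---is different from the paper's. The paper does not argue directly: it observes that $\rxrx$ is an Ore semigroup with group of fractions $\kxkx$ and that $\bigcup_{k\in\rx}(0,k)^{-1}\omr$ is dense in $\oma$, and then invokes the general dilation theorems of \cite{Laend} (Theorems~2.1 and~2.4), which package exactly the ``corner of the enveloping crossed product'' statement you are trying to establish. Your route is a legitimate unpacking of that machinery, and fullness, the construction of the map, and your injectivity argument are all fine (minor slip: $a$ lies in $\af$, not necessarily $\afs$, since superideals can have $\epsilon_P(a)=\infty$; the scaling into $\rhat$ still works).

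There is, however, a genuine error in your surjectivity step. It is \emph{not} true that whenever $\omr\cap(m,k)\omr\neq\emptyset$ the element $(m,k)$ can be ``adjusted by a unit so as to lie in $\rxrx$'' while preserving the compressed partial isometry. Already for $K=\Qz$ and $(m,k)=(0,\tfrac12)$ the intersection is nonempty, yet no unit multiple of $\tfrac12$ is an integer; what one finds instead is $\chf_{\omr}U_{(0,1/2)}\chf_{\omr}=v_{(0,2)}^{\,*}$, the \emph{adjoint} of a semigroup isometry. The correct mechanism is the Ore factorisation: write $(m,k)=g_1^{-1}g_2$ with $g_1,g_2\in\rxrx$; since $\chf_{\omr}$ and $f$ commute and $g_i\omr\subset\omr$, one gets
\[
\chf_{\omr}\,f\,U_{(m,k)}\,\chf_{\omr}=(\chf_{\omr}f)\,\big(\chf_{\omr}U_{g_1}^{*}\big)\big(U_{g_2}\chf_{\omr}\big)=(\chf_{\omr}f)\,v_{g_1}^{\,*}v_{g_2},
\]
which visibly lies in the image of $\cC(\omr)\rtimes\rxrx$. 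This is a conceptual point (Ore factorisation producing $v^{*}w$ words), not an adele bookkeeping issue, and it is precisely what the theorem from \cite{Laend} encodes. With this correction your direct argument goes through.
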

\begin{proof} Clearly  $(\rx)\inv (\rxrx) = \kxkx$, so $\rxrx$ is an
Ore semigroup, and $ \cup_{k\in \rx} (0,k)\inv \omr$ is dense in
$\oma$ because for every element $\omega_{r,a} \in \oma$ there exist
$k\in \rx$ such that $kr \in \rhat$ and $ka \in \rhat/ ka\rhat $. By
\cite[Theorem 2.1]{Laend} the action of $\kxkx$ on $\cC_0(\oma)$ is
the minimal automorphic dilation (see \cite{Laend}) of the semigroup
action of $\rxrx$ on $\cC(\omr)$. The fullness of $\chf_{\omr}$ and
the isomorphism to the corner then follow by \cite[Theorem
2.4]{Laend}.
\end{proof}

\begin{proposition}\label{semixprod}
Let $v_{m,k}$ with $(m,k)\in \rxrx$ be the semigroup of isometries
in $\cC(\omr) \rtimes \rxrx$ implementing the action of $R\rtimes
R^\times$. For each ideal $I$ in $R$ let $E_I$ be the characteristic
function of the set $\{\omega_{s,b} \in \omr\,|\, b \geq I, \, s(b)
\in I\}$. Then the maps $u^x \mapsto v_{x,1}$, $s_k \mapsto
v_{0,k}$, and $e_I \mapsto E_I$ extend to an isomorphism of $\cT[R]$
onto $\cC(\omr) \rtimes \rxrx$.
\end{proposition}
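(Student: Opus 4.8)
The plan is to establish the isomorphism $\cT[R]\cong\cC(\omr)\rtimes\rxrx$ by exhibiting the inverse of the surjection that exists once we check the defining relations, and then to invoke Theorem~\ref{inj}. First I would verify that the assignment $u^x\mapsto v_{x,1}$, $s_k\mapsto v_{0,k}$, $e_I\mapsto E_I$ respects the relations Ta--Td. Relation Ta is immediate because $(v_{m,k})$ implements the semigroup action of $R\rtimes R^\times$, so $v_{x,1}$ is a unitary representation of $R$, $v_{0,k}$ is an isometric representation of $R^\times$, and the covariance $v_{0,k}v_{x,1}=v_{kx,1}v_{0,k}$ holds. For Tb one checks that $E_I$ is a projection (it is a characteristic function) and that $E_{I\cap J}=E_IE_J$ by comparing the defining subsets of $\omr$: using that $b\geq I$ and $b\geq J$ together mean $b\geq I\cap J$ (since $\epsilon_P(b)\geq\max\{\epsilon_P(I),\epsilon_P(J)\}=\epsilon_P(I\cap J)$), while the condition $s(b)\in I$ and $s(b)\in J$ is the same as $s(b)\in I\cap J$ after reduction; also $E_R=\chf_{\omr}=1$. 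For Tc one computes $v_{0,k}E_Iv_{0,k}^*=E_{kI}$ using the action $(0,k)\omega_{s,b}=\omega_{ks,kb}$ and the fact that $b\geq I\Leftrightarrow kb\geq kI$ and $s(b)\in I\Leftrightarrow (ks)(kb)\in kI$. For Td one uses the additive action $(x,1)\omega_{s,b}=\omega_{x+s,b}$: the product $E_Iv_{x,1}E_I$ corresponds to the set of $\omega_{s,b}$ with $b\geq I$, $s(b)\in I$ and $(x+s)(b)\in I$, which forces $x(b)\in I$; if $x\notin I$ we can after enlarging $b$ arrange $x(b)\notin I$, so the set is empty and $E_Iv_{x,1}E_I=0$, while if $x\in I$ the extra condition is vacuous and $v_{x,1}$ commutes with $E_I$.

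Once the relations are verified, the universal property of $\cT[R]$ yields a homomorphism $\pi:\cT[R]\to\cC(\omr)\rtimes\rxrx$. Surjectivity follows because the image contains all $v_{x,1}$, $v_{0,k}$ and enough of $\cC(\omr)$: indeed by Lemma~\ref{frac} every $e_I$ has the form $s_b^*s_as_a^*s_b$, and conversely the functions $E_I$ together with the semigroup isometries generate $\cC(\omr)\rtimes\rxrx$ --- here one should note that the $E_I$, together with their translates $v_{x,1}E_Iv_{x,1}^*$ and the dilates, separate points of $\omr$ and hence generate $\cC(\omr)$ by Stone--Weierstrass, so the crossed product is generated by the image. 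For injectivity I would apply Theorem~\ref{inj}: it suffices to show $\pi$ is injective on each $\cD_n$, equivalently that for each $I\in\cI_n$ and $x\in R/I$ the image $\pi(\delta_{I,n}^x)$ is a nonzero (rank-one type) projection in $\cC(\omr)$. Tracing through the definition, $\pi(\delta_{I,n}^x)$ is the characteristic function of a nonempty clopen subset of $\omr$: concretely of those $\omega_{s,b}$ with $\epsilon_{P_i}(b)=k_i$ for $i\le n$ and $\epsilon_P(b)=0$ otherwise, and with $s$ in the prescribed class modulo these primes. This set is visibly nonempty (take $b=I$ itself and $s$ a representative of $x$), so $\pi$ is injective on $\cD_n$.

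The main obstacle I anticipate is the bookkeeping in verifying relation Td and in identifying $\pi(\delta_{I,n}^x)$ with the correct characteristic function, because these require translating between the three descriptions of the spectrum: the superideal exponents $\epsilon_P$, the ideal-class reductions $s(b)$, and the combinatorial gadgets $e_{P^n}$, $f_{P^n}$, $\ve_{P^n}$ from Section~\ref{D}. In particular one must check carefully that the subset of $\omr$ cut out by $E_I$ matches, under the identification $\Spec\bar\cD=Y_R$ of Proposition~\ref{49}, the projection $e_I^0$; this amounts to recognizing $\{\omega_{s,b}\,|\,b\geq I,\ s(b)\in I\}$ as the closure in $\oma$ of the set of characters $\eta_J^y$ with $J\subseteq I$ and $y\in I/J$, which is exactly the defining property of $e_I$ acting on $\bar\cD$. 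Once this compatibility is in place, everything else is a routine application of the machinery already developed, and the conclusion follows from Theorem~\ref{inj} exactly as in Corollaries~\ref{414} and~\ref{417}.
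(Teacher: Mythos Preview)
Your approach is essentially the paper's: verify Ta--Td to obtain the homomorphism, argue surjectivity by point-separation in $\omr$ (Stone--Weierstrass), and deduce injectivity from Theorem~\ref{inj} by exhibiting a point where $\pi(\delta_{I,n}^x)$ is nonzero. Two small corrections are in order. First, your argument for Td is garbled: there is nothing to ``enlarge''. The conditions $s\equiv 0\pmod I$ and $x+s\equiv 0\pmod I$ already force $x\equiv 0\pmod I$ regardless of $b$, so if $x\notin I$ the set is empty outright. Second, your description of the support of $\pi(\delta_{I,n}^x)$ is too small: $\delta_{I,n}^x$ imposes conditions only at $P_1,\ldots,P_n$, so the correct support is $\{\omega_{s,b}:\epsilon_{P_i}(b)=k_i\ (i\le n),\ s\equiv x\pmod I\}$ with \emph{no} constraint on $\epsilon_P(b)$ for primes $P\notin\{P_1,\ldots,P_n\}$. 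Your witness ``$b=I$, $s=x$'' still lies in this larger set, so nonemptiness survives; the paper instead produces a witness $a\in R$ (adjusting by an auxiliary prime $Q$ when $I$ is non-principal) and checks $E_I^x(\omega_{r,a})=1$ while $E_{IP_j}^x(\omega_{r,a})=0$, which is the same verification carried out with an integral rather than adelic element.
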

\begin{proof}
The set $\{\omega_{s,b} \in \omr:  b \geq I, \, s(b) \in I\}$ is
closed open because it is defined via finitely many
 conditions  ($\epsilon_P(b) \geq \epsilon_P(I)$ on the prime factors of $I$ and
$s=0 \pmod I$) each of which determines a closed open set; thus
$E_I$ is continuous. The relations Ta are satisfied because $(m,k)
\to v_{m,k}$ is an isometric representation of $\rxrx$, and Tb holds
because $b \geq I $ and $b \geq J$ if and only if $b \geq  I \cap
J$, and $s(I) \in I $ and $ s(J)  \in J$ if and only if $s(I\cap J)
\in I \cap J$. Computing with $m=0$ in equation \eqref{action} shows
that multiplication by $k\in\rx$ maps the support of $E_I$ onto the
support of $E_{kI}$, hence relation Tc holds. Similarly, setting
$k=1$ in equation \eqref{action} shows that addition of $m$ maps the
support of $E_I$ onto itself if $m\in I$, and onto a set disjoint
from it if $m\not\in I$, showing that relation Td holds. This gives
a homomorphism  $h: \cT[R] \to \cC(\omr)\rtimes \rxrx$.

To show that $h$ is surjective it suffices to prove that the
functions $E_I^x := v_{x,1}E_I v_{-x,1}$ separate points in $\omr$.
So let $ \omega_{r,a}$ and $\omega_{s,b}$ be two distinct points in
$\omr$.  If $a \neq b$, we may assume there exists a prime ideal $Q$
such that $\epsilon_Q(a) < \epsilon_Q(b)$ (otherwise reverse the
roles of $a$ and $b$). If we now let $I =  Q^{\epsilon_Q(b)}$, then
$E_I^{s(I)}$ takes on the value $1$ at $\omega_{s,b}$ but vanishes
at $\omega_{r,a}$. If $a = b$ as superideals, since the points
$\omega_{r,a}$ and $\omega_{s,b}$ are distinct, there exists an
ideal $I \leq a$ for which $r(I) \neq s(I)$, in which case the
function $E_I^{s(I)}$ does the separation.

Next we show that this homomorphism is injective on $\cD_n$ for each
$n$. Fix $n$, let $I$ be an ideal whose prime factors are all in
$\{P_1, P_2, \cdots, P_n\}$ and choose a class $x\in R/I$.
 Choose $a\in I$ such that $\varepsilon_{P_j}(a) = \varepsilon_{P_j}(I)$ for $j = 1, 2. \cdots, n$
 (if $I$ is principal, a generator will do; otherwise adjust with a prime ideal
 $Q \notin \{P_1, P_2, \cdots, P_n\}$
 such that $IQ$ is principal). Also choose $r\in \rhat/a\rhat$ such that $r(I) = x$ in $R/I$. Then
  $E_I^x (\omega_{r,a}) = 1$, but $E_{IPj}^x (\omega_{r,a}) = 0$ for each $j$,
  proving that $h(\delta_{I,n}^x ) \neq 0$. Hence $h$ is injective
  on $\cD_n$ and the result follows by Theorem~\ref{inj}.
 \end{proof}

As a byproduct we see that $\omr$ is an `adelic' realization of the
spectrum of $\bar\cD$.
 \begin{corollary}
We view each nonzero ideal $I$ of $R$ as an element $a(I)$ of
$\rhat/\rhat^*$ and, similarly, we view each $x\in R/I$ as a class
$r(x,I)$ in $\rhat/a(I) \rhat \cong R/I$. Then the map $\eta_I^x
\mapsto \omega_{r(x,I),a(I)}$ defined for $(x,I) \in \bigsqcup_I
R/I$ extends to a homeomorphism of the spectrum $Y_R$ of $\bar \cD$
onto $\omr$.
 \end{corollary}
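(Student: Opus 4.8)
The plan is to obtain the homeomorphism simply by restricting the isomorphism $h\colon\cT[R]\to\cC(\omr)\rtimes\rxrx$ of Proposition~\ref{semixprod} to the commutative subalgebra $\bar{\cD}$. Since $h(e^x_I)=h(u^x)h(e_I)h(u^{-x})=v_{x,1}E_Iv_{-x,1}=:E^x_I$ lies in $\cC(\omr)$ and $\bar{\cD}$ is the closed $*$-algebra generated by the $e^x_I$, we get $h(\bar{\cD})\subseteq\cC(\omr)$; and because $h$ is an isomorphism, $h(\bar{\cD})$ is precisely the closed unital $*$-subalgebra of $\cC(\omr)$ generated by the $E^x_I$. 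But the surjectivity argument in the proof of Proposition~\ref{semixprod} already established that the $E^x_I$ separate the points of the compact Hausdorff space $\omr$, so by the Stone--Weierstrass theorem $h$ restricts to an isomorphism $\bar{\cD}\xrightarrow{\ \cong\ }\cC(\omr)$. Gelfand duality turns this into a homeomorphism $\Phi\colon Y_R=\Spec\bar{\cD}\to\Spec\cC(\omr)=\omr$, characterised by $\chi(e^y_M)=E^y_M(\Phi(\chi))$ for every $\chi\in Y_R$, every ideal $M$ of $R$, and every $y\in R/M$.

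It then remains only to check that $\Phi$ sends $\eta_I^x$ to $\omega_{r(x,I),a(I)}$ for $(x,I)\in\bigsqcup_I R/I$, which by the characterisation above means verifying
$$\eta_I^x(e^y_M)=E^y_M\bigl(\omega_{r(x,I),a(I)}\bigr)\qquad\text{for all ideals }M\text{ of }R\text{ and all }y\in R/M.$$
I would compute the two sides separately and check that each equals $1$ exactly when $I\subseteq M$ and $x\equiv y\pmod M$ (and vanishes otherwise). For the right-hand side: since $v_{y,1}$ is unitary, formula \eqref{action} gives $E^y_M(\omega_{s,b})=E_M(\omega_{s-y,b})$, which by the definition of $E_M$ equals $1$ precisely when $b\geq M$ and the reduction of $s-y$ modulo $M$ vanishes in $R/M$; specialising to $b=a(I)$ and $s=r(x,I)$, and using that $a(I)\geq M$ is equivalent to $I\subseteq M$ while $r(x,I)$ reduces modulo $M$ to the class of $x$, this gives the asserted description. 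For the left-hand side: one checks, using \lemref{min} together with the tensor factorisation $\bar{\cD}\cong\bigotimes_P\bar{\cD}_P$ (and the product formula $\eta_I^x=\bigotimes_i\eta_{P_i^{k_i}}^{x_i}$), that $\eta_I^x$ is implemented under the faithful representation $\mu$ by the vector state at the standard basis vector $\xi$ of $\ell^2(R/I)\subseteq H_R$ indexed by $x$; hence $\eta_I^x(e^y_M)=1$ iff $\mu(u^ye_Mu^{-y})$ fixes $\xi$, and by the definition of $\mu$ (where $\mu(e_M)$ is the projection onto $\bigoplus_{J\subseteq M}\ell^2(M/J)$ and $\mu(u^y)$ is translation by $y$) this happens iff $I\subseteq M$ and $x\equiv y\pmod M$.

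Since $\eta_I^x\circ h^{-1}$ and evaluation at $\omega_{r(x,I),a(I)}$ thus agree on the generators $E^y_M$ of $\cC(\omr)$, they coincide, so $\Phi(\eta_I^x)=\omega_{r(x,I),a(I)}$; as $\bigsqcup_I R/I$ is dense in $Y_R$ by Proposition~\ref{49}, this is exactly the asserted homeomorphism. I do not expect any genuine obstacle here: the substantial part — surjectivity and injectivity of $h$, and in particular the separation of points of $\omr$ — has already been carried out in Proposition~\ref{semixprod}, and the only thing requiring care is the index bookkeeping in the last step, especially keeping straight the dictionary between ideals of $R$, the integral superideals $a(I)$, and the exponent functions $\epsilon_P$.
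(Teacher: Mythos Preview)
Your proposal is correct and follows essentially the same approach as the paper: restrict the isomorphism $h$ of Proposition~\ref{semixprod} to $\bar{\cD}$, obtain the induced homeomorphism on spectra, and verify it agrees with the stated map on the dense subset $\bigsqcup_I R/I$ by evaluating both $\eta_I^x$ and $\omega_{r(x,I),a(I)}$ on the generators $e_M^y$ (resp.\ $E_M^y$). Your version is slightly more explicit in two places---you invoke Stone--Weierstrass to justify that $h(\bar{\cD})=\cC(\omr)$, and you compute $\eta_I^x(e_M^y)$ via the representation $\mu$ rather than directly from the tensor-product definition of $\eta_I^x$---but the structure is identical to the paper's argument.
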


 \begin{proof}
From  the proof of Proposition~\ref{semixprod}, we know that the
 isomorphism $h$ maps the projection $e_I^x$ onto the projection $E_I^x$,
 giving
an isomorphism of $\bar \cD$ to $\cC(\omr)$. To conclude that the
homeomorphism $\hat h\inv:  Y_R \to \omr $ induced by this
isomorphism maps $\eta_I^x$ to  $\omega_{r,a}$ as stated, it
suffices to evaluate
\[\eta_I^x(e_J^y) = \begin{cases} 1& \text{ if } I\subset J \text{ and }
x=y\pmod J\\
0 & \text{ otherwise,}
\end{cases}
\]
and
\[
E_J^y (\omega_{r(x,I),a(I)}) = \begin{cases} 1& \text{ if } J \leq a(I) \text{ and } r(x,I) =y \pmod J\\
0 & \text{ otherwise}
\end{cases}
\]
 for every ideal $J$ in $R$ and $y \in R/J$, and to observe that the two results
coincide because $r(x,I) = x \pmod I$.
\end{proof}
\section{KMS-states for $\beta\leq 2$}\label{beta1}
Recall that for a non-zero ideal $I$ in $R$ we denote by $N(I)$ the
norm of $I$, i.e. the number $N(I)=|R/I|$ of elements in $R/I$. For
$a\in R^\times$ we also write $N(a)=N(aR)$. The norm is
multiplicative, \cite{Neukirch}.

Using the norm one defines a natural one-parameter automorphism
group $(\sigma_t)_{t\in\Rz}$ on $\cT[R]$, given on the generators by
$$\sigma_t(u^x)=u^x\quad\sigma_t(e_I)=e_I\quad\sigma_t(s_a)=N(a)^{it}s_a$$
(this assignment manifestly respects the relations between the
generators and thus induces an automorphism). Recall that a
$\beta$-KMS state with respect to a one parameter automorphism group
$(\sigma_t)_{t\in\Rz}$ is a state $\vp$ which satisfies $\vp
(yx)=\vp (x\sigma_{i\beta}(y))$ for a dense set of analytic vectors
$x,y$ and for the natural extension of $(\sigma_t)$ to complex
parameters on analytic vectors, \cite{BrRo}. Here KMS-states on
$\cT[R]$ are always understood as KMS with respect to the
one-parameter automorphism group $\sigma$ defined above, in which
case the $\beta$-KMS condition for a state $\vp$ on $\cT[R]$
translates to \bgl\label{KMScond} \vp (u^xz)=\vp (zu^x)\quad\vp
(e_Iz)=\vp (ze_I)\quad\vp (s_az)=N(a)^{-\beta}\vp (zs_a)\egl for a
set of analytic vectors $z$ with dense linear span and for the
standard generators $u^x$, $e_I$, $s_a$ of $\cT[R]$. We will usually
choose $z$ to be a product of the form $s^*_bdu^xs_a$ with
$d\in\bar{\cD}$. We will use in the following the notation from
section \ref{D}.

\bprop There are no $\beta$-KMS states on $\cT[R]$ for $\beta<1$.
 \eprop

\bproof Given $a\in \rx$ and $x\in R/aR$, denote by $e_a^x$ the
projection $u^x e_{aR} u^{-x}$. If we apply a $\beta$-KMS state
$\vp$ to the inequality
\[
\sum_{x\in R/aR} e_a^x = \sum_{x\in R/aR} u^x s_a s^*_au^{-x} \leq 1
\]
using that $\vp (e_a^x)=N(a)^{-\beta}$ by \eqref{KMScond} and that
$\abs{R/aR}=N(a)$, we obtain $N(a) N(a)^{-\beta} \leq 1$, which
implies $\beta \geq 1$. \eproof

\blemma\label{dense} Let $\varphi$ be a $\beta$-KMS state for $\beta
> 1$ on $\cT[R]$ and let $\pi_\vp$ be the associated GNS-representation
on $H_\varphi$. Then $\pi_\varphi(\cD_n)H_\varphi$ is dense in
$H_\varphi$, for each $n$.\elemma

\bproof Fix $n\in \Nz$ and let $J$ be an ideal in $\cI_n$. Since the
class group for the field of fractions $K$ is finite, there is $k\in
\Nz$ such that $J^k$ is a principal ideal, say $J^k=aR$ with $a\in
R^\times$. We have $N(J^k) =N(a)$.

The subspace $L=\overline{\pi_\vp(\cD_n)H_\varphi}$ is invariant
under all $u^x$, $x\in R$. It is also invariant under all $s_c,
s_c^*$, $c\in R^\times$. The reason is that  if $cR=QS$ with $S \in
\cI_n$ and $Q$ relatively prime to $P_1, P_2, \cdots, P_n$, then,
according to Lemma \ref{inv}, for every $I\in \cI_n$  we have
$s_c\delta^x_{I}s^*_c = \delta^{cx}_{SI} (u^{cx}e_Q u^{-cx})\leq
\delta^{cx}_{SI}\,\in\cD_n$.

Denote by $E$ the orthogonal projection onto $L^\perp$. Then $1-E$
is the strong limit of $\pi_\vp (h^{1/n})$ where $h$ is a strictly
positive element in $\cD_n$. Therefore $\vp_E$ defined by $\vp_E
(z)=(E\pi_\vp(z)\xi_\vp |\xi_\vp )$, for the cyclic vector $\xi_\vp$
in the GNS-construction, is a $\beta$-KMS functional (consider the
limit $n\to\infty$ of the expression
$\vp((1-h^{1/n})x\sigma_{i\beta}(y))=\vp (y(1-h^{1/n})x)$ using the
fact that $E$ commutes with $y$). Consider the restriction $\rho$ of
$\pi_\vp$ to $L^\perp$. Then $\rho (\cD_n)=0$, whence $\rho
(1-f_{J^k})=0$.

It follows that

$$\rho(1)= \sum_{x\in R/J^k}\rho(u^{x}s_as^*_au^{-x}).$$

This implies that $\vp_E(1)= N(J^k)\vp _E(s_as^*_a)=N(a)\vp
_E(s_as^*_a)$. On the other hand the fact that $\vp_E$ is
$\beta$-KMS implies that $\vp_E(1)= \vp _E(s^*_as_a) =N(a)^\beta\vp
_E (s_as^*_a)$. Since  $\beta >1$, it follows that $\vp_E(1) = 0$
and hence $E=0$. \eproof

\blemma\label{tend} Let $\varphi$ be a $\beta$-KMS state for $1\leq
\beta \leq 2$ on $\cT[R]$ and let $I$ be a fixed ideal in $R$. Then
$\varphi (\delta^0_{I,n})$ tends to 0 for $n \to \infty$.\elemma

\bproof Consider again the GNS-representation $\pi_\vp$ on $H_\vp$.
Let $\tilde{\delta_I}$ denote the limit, in the strong operator
topology, of the decreasing sequence $(\pi_\vp (\delta^0_{I,n}))$ as
$n \to \infty$. By Lemma \ref{prodel}, the projections $\pi_\vp
(u^xs_a)\tilde{\delta_I}\,\pi_\vp (s^*_au^{-x})$ are pairwise
orthogonal for $a\in R^\times /R^*$, $x\in R/aR$.  If we let
$\tilde\vp$ denote the vector state extension of $\vp$ to
$\cL(H_\vp)$ we have

$$\sum_{a\in R^\times/R^*,\,x\in R/aR}\tilde\vp (u^xs_a\tilde{\delta_I}
s^*_au^{-x}) \leq\vp (1)=1.$$

However, since  $\tilde\vp$ is normal we have $\lim_{n\to \infty}
\vp(\delta^0_{IP_1P_2\cdots P_n}) =\tilde\vp(\tilde \delta_I) $
 and since $\vp$ is $\beta$-KMS,
 it follows that $\tilde\vp (u^xs_a\tilde{\delta_I}
s^*_au^{-x})=N(a)^{-\beta}\tilde \vp (\tilde{\delta_I})$. Thus

$$\tilde\vp (\tilde{\delta_I})\sum_{a\in R^\times/R^*}N(a)N(a)^{-\beta}
\leq 1,$$

The series on the left hand side represents the partial Dedekind
$\zeta$-function, corresponding to the trivial ideal class, at
$\beta -1$. Thus, by \cite{Neukirch}, Theorem 5.9, it diverges for
$\beta-1\leq 1$. Therefore the inequality above implies $\tilde\vp
(\tilde{\delta_I})=0$. \eproof

\blemma\label{mult}  Let $\vp$ be a $\beta$-KMS state on $\cT[R]$
and let $I, J\in \cI_n$ be two ideals in $R$ in the same ideal
class. Then, for any $x,y\in R$,
$$\varphi (\delta_{I,n}^x) = N(I)^{-\beta}N(J)^\beta\varphi
(\delta_{J,n}^y)$$ and
$$\varphi (\ve_{IP_1P_2\cdots P_n}) = N(I)^{1-\beta}N(J)^{\beta-1}
\varphi (\ve_{JP_1P_2\cdots P_n}).$$ \elemma

\bproof If $aI=bJ$ then $N(a)N(b)^{-1}=N(I)^{-1}N(J)$ and, by Lemma
\ref{inv}, $s_a\delta_{I,n}^0s_a^*=s_b\delta_{J,n}^0s_b^*$.
Therefore
$$ \varphi (\delta_{I,n}^x)= \varphi (\delta_{I,n}^0)=N(a)^\beta
\varphi (s_a\delta_{I,n}^0s_a^*)= N(a)^\beta\varphi
(s_b\delta^{0}_{J,n}s_b^*)=N(a)^\beta N(b)^{-\beta}\varphi
(\delta^{0}_{J,n})$$

and
$$\varphi (\delta_{I,n}^x) = N(I)^{-\beta}N(J)^\beta\varphi
(\delta_{J,n}^y).$$

Summing over $x\in R/I$ and $y\in R/J$ gives the second statement.
\eproof

\blemma \label{gaugeinv}Let $\varphi$ be a $\beta$-KMS state for
$1\leq \beta \leq 2$ on $\cT[R]$. Let $\bar{\cD}$ be the canonical
subalgebra of $\cT[R]$ generated by all projections $u^xe_Iu^{-x}$
and let $d\in \bar{\cD}$. Then $\varphi (s_a^*du^ys_b)$ is zero
except if $a=b$ and $y=0$ (in which case the argument $s_a^*du^ys_b$
is also an element in $\bar{\cD}$).\elemma

\bproof Suppose first $\beta = 1$ and let $\vp$ be a $1$-KMS state;
then for each $c\in R^\times$ and $x\in R/cR$ we get $\vp (e_c^x)=
\vp(u^x s_c s_c^* u^{-x}) = N(c)^{-1}$, whence $\sum_{x\in R/cR}\vp
(e_c^x)=1$ and

\bgl\label{partition}\vp (z) =\vp \left(\Big(\sum_{x\in
R/cR}e^x_c\Big)z\right) = \vp\left(\sum_{x\in R/cR}e^x_c
ze^x_c\right) \egl

for each $z\in \cT[R]$. Again, let $z=s_{a}^*du^ys_{b}$; then

$$ e_c^x ze_c^x=e_c^xs_a^*du^ys_be_c^x=
s_a^*e_{ac}^{ax}e_{bc}^{bx+y}du^ys_{b}$$

where the product $e_{ac}^{ax} e_{bc}^{bx + y} $ is nonzero if and
only if $(a x + ac R)\, \cap \,(bx +y + b cR) \neq \emptyset$, which
implies
\begin{equation}\label{equationmodc}
(a - b) x \equiv y \mod cR.
\end{equation}
Suppose $z\notin \bar{\cD}$, then either $a \neq b$ or else $a = b$
and $y \neq 0$. In the first case choose $c\in \rx$ with $cR$
relatively prime to $(a - b)R$; then there is a unique $x \in R/cR$
for which (\ref{equationmodc}) holds. In the second case, choose  $c
\in \rx$ with $cR$ relatively prime to $y$; then
(\ref{equationmodc}) has no solutions in $x$. Thus for $z\notin
\bar{\cD}$ there is at most one $x\in R/(cR)$ such that
$\varphi(e_c^x z e_c^x) \neq 0$, and from equation (\ref{partition})
we obtain
\[
|\varphi(z)| \leq N(c)\inv \|z\|.
\]

Since $N(c)$ can be chosen arbitrarily large this shows that $\vp
(z)=0$. This proves that $\vp(z) \neq 0$ only if $a=b$ and $y = 0$
in the case $\beta =1$.

Suppose now that $1<\beta \leq 2$. It follows from  Lemma
\ref{dense} and from the normality of the vector state extension of
$\vp$ to $\cL(H_\vp)$
 that for all $z\in\cT[R]$ and $n\in \Nz$, we have

\bgl\label{expand}
 \vp (z) =\sum_{I\in \cI_n, \,x\in R/I}\vp (\delta_{I,n}^xz\delta_{I,n}^x).
 \egl
Working this out for $z=s_a^*du^ys_b$, where we may assume that $n$
is so large that $aR, bR\in \cI_n$, we find

$$ \delta_{I,n}^xs^*_adu^ys_b\delta_{I,n}^x = s^*_a\delta_{aI,n}^{ax}
du^y\delta_{bI,n}^{bx}s_b
=s^*_a\delta_{aI,n}^{ax}\delta_{bI,n}^{bx+y}du^ys_b$$

and this expression (call it $z_I^x$) does not vanish only if
$aI=bI$, i.e. if $a=gb$ for a unit $g\in R^*$. We have to consider
only that case.

Assume first that $g=1$. Then $z_I^x\neq 0$ only if $bx \equiv
bx+y\mod aI$, that is, only if $y\in bI$.  For a fixed $y\neq 0$
this last condition is satisfied only for the finitely many ideals
$I$ in $R$ such that $bI$ divides $yR$, Thus, if $y\neq 0$, in the
sum (\ref{expand}) there are at most a fixed finite number
(independent of $n$) of non-zero terms   and each individual term is
bounded by $\vp(\delta_{I,n}^0) \|z\|$, which is arbitrarily small
for large $n$ by Lemma \ref{tend}, whence $\vp (z)=0$.

Assume now that  $g\neq 1$. Then $z_I^x\neq 0$ only if $x$ satisfies
$(g-1)bx \equiv y \mod bI$. Let $D:=  \gcd(I, (g-1)R)$. If $y \notin
b D$, then there is no such $x$. Assume thus $y \in b D$, and write
$y = by'$ with $y'\in D$. The nonzero terms in the sum
\eqref{expand} may only arise from $x$ and $I$ such that $(g-1) x
\equiv y' \mod I$. Notice that multiplication by $(g-1)$, viewed as
a map $R/I \to R/I$ is $N(D)$-to-one. Since  $N(g-1) \geq N(D)$,
$N(g-1)$  is a uniform bound on the number of solutions $x$ of the
equation $(g-1) x \equiv y' \mod I$. Thus, for each ideal $I$ there
are at most $N(g-1)$ classes $x+I$ in $R/I$ such that $z_I^{x}\neq
0$. Choosing a reference ideal $J_\gamma$ for each ideal class
$\gamma$ and using Lemma \ref{mult}, we can transform \eqref{expand}
into an estimate

$$\abs{\vp (z)}
\leq\sum_\gamma\sum_{I\in\cI_n\cap\gamma}N(g-1) N(I)^{-\beta}
N(J_\gamma) ^{\beta}\vp (\delta_{J_\gamma,n}^0) \, \|z\| .$$

Since the series for all the partial Dedekind $\zeta$-functions
converge for $\beta >1$ and since each $\vp(\delta_{J_\gamma,n}^0)
\to 0$ as $n\to \infty$ by Lemma \ref{tend}, we conclude that  $\vp
(z) = 0$ unless $a=b$ and $y =0$ also for $1<\beta \leq 2$,
completing the proof.\eproof As a consequence of this lemma, in
order to know $\vp$, it suffices to know its values on $\bar{\cD}$.
Moreover, for $1< \beta \leq 2$  it suffices to know $\vp$ on
$\cD_n$ for all $n$, by Lemma \ref{dense}.

\btheo\label{den} Let $0<\sigma\leq1$. For each ideal class $\gamma$
and for each $n\in \Nz$ we set $\zeta^{(n)}_\gamma
(\sigma)=\sum_{I\in\cI_n\cap \gamma}N(I)^{-\sigma}$. Then for any
two ideal classes $\gamma_1,\gamma_2$ the quotient

$$\frac{\zeta^{(n)}_{\gamma_1} (\sigma)}{\zeta^{(n)}_{\gamma_2} (\sigma)}$$

tends to 1 as $n\to \infty$.\etheo

We postpone the proof and give it in the appendix.

\btheo For each $\beta$ with $1 \leq \beta \leq 2$ there is exactly
one $\beta$-KMS state $\vp_\beta$ on $\cT[R]$; it factors through
the canonical conditional expectation $E:\cT[R] \to \bar{\cD}$ and
it is determined by the values \bgl\label{values} \vp_\beta(e_I^x) =
N(I)^{-\beta} \qquad   \text{ with $I$ an ideal in $R$ and $x\in
R/I$.} \egl For $\beta = 1$ the state $\vp_\beta$ factors through
the natural  quotient map $\cT[R] \to \cA[R]$.\etheo

\bproof Suppose $\vp$ is a $\beta$-KMS state.
 Lemma \ref{gaugeinv} implies that $\vp$
factors through the conditional expectation $E:\cT[R] \to \bar{\cD}$
for $1\leq \beta \leq 2$.

The next step is to show that \eqref{values} holds. Since the linear
combinations of the projections $e_I^x :=u^xe_Iu^{-x}$ are dense in
$\bar{\cD}$ and since $\vp (u^xe_Iu^{-x})=\vp (e_I)$, this will
yield the uniqueness assertion. The argument for $\beta =1$ is
easier and we do it first.

Assume first  $\beta =1$ and let $\vp$ be a $1$-KMS state. If $I$ is
any (non-zero) ideal in $R$, then

$$1=\vp (1)\geq\vp (\sum_{x\in R/I}e_I^x) =N(I)\vp (e_I)$$ and if
$aR\subset I$ and $y\in R/I$, then

$$\vp (e_I^y)=\vp (e_I)\geq \sum_{x\in I/aR}\vp (e_a^x)= (N(a)N(I)^{-1})
N(a)^{-1}=N(I)^{-1}$$ For the identity on the right hand side note
that $\vp (e_a^x)= N(a)^{-1}$ and that $\abs{I/aR}=N(a)N(I)^{-1}$.
We conclude that $\vp (e_I^y)=N(I)^{-1}$, so \eqref{values} holds
for $\beta=1$.

It is obvious that such a state $\vp$ satisfies $\vp(f_P) =1$ and
hence vanishes on the projections of the form $\ve_P$ that generate
the kernel of the quotient map $q : \cT[R]\to \cA[R]$ as an ideal,
so $\vp$ factors through this quotient. It is now easy to prove
existence of a  $1$-KMS state. From Section 4 of  [5], and the fact
that $q$ intertwines the canonical conditional expectations on
$\cT[R]$ and on $\cA[R]$, we know that  the image of $\bar\cD$ in
$\cA[R]$ under  $q$ is naturally isomorphic to $C(\hat R)$
($\hat{R}$ being the profinite completion of $R$). If we let
$\lambda_1$ be the state of $C(\hat R)$ given by normalized Haar
measure on $\hat R$, an easy computation shows that $\vp_1:=
\lambda_1 \circ E \circ q$ satisfies the $1$-KMS condition from
\eqref{KMScond}. This finishes the proof in the case $\beta=1$.

Assume now $1 < \beta \leq 2$ and let $\vp$ be a $\beta$-KMS state.
Using Lemma \ref{prodel}, for the particular element $e_I$ of
$\bar{\cD}_n$, with $I\in\cI_n$, and working in the GNS
representation $\pi_\vp$ of $\vp$ we obtain the formula

\bgl\label{eI} \pi_\vp(e_I ) = \sum_{J\in\cI_n,\, J\subset I,\, x\in
I/J}\pi_\vp( \delta_{J,n}^x ),\egl

with strong operator convergence by Lemma \ref{dense}. We know from
Lemma~\ref{mult} that for $J,L\in \cI_n$ in the same ideal class we
have

$$\vp (\delta_{L,n}^x) = N(L)^{-\beta}N(J)^\beta\vp
(\delta_{J,n}^y)$$

Thus, for an ideal class $\gamma$ in the ideal class group $\Gamma$
for $R$, the expression

$$\alpha^{(n)}_\gamma = N(L)^\beta\vp (\delta^0_{L,n})$$
does not depend on the choice of an ideal $L\in \gamma\cap\cI_n$.
Using the vector state extension $\tilde\vp$ of $\vp$ to handle the
infinite sum we obtain
\bgl\label{eq1}1=\vp(1)=\sum_{I\in\cI_n,\,x\in
R/I}\vp(\delta_{I,n}^x)=\sum_{I\in\cI_n} N(I)\vp(\delta_{I,n}^0)=
\sum_{\gamma\in\Gamma}\alpha^{(n)}_\gamma\zeta^{(n)}_\gamma(\beta
-1)\egl

On the other hand, using (\ref{eI}) and computing with $\tilde\vp$
again, we see that

\bglnoz \vp (e_I) = \sum_{J\subset I,\,x\in I/J}\vp
(\delta^x_{J,n})=\sum_{J\subset I}
\alpha^{(n)}_{[J]}N(I)^{-1}N(J)^{1-\beta}
=\sum_L\alpha^{(n)}_{[LI]}N(I)^{-1}N(LI)^{1-\beta}\\=\sum_L
\alpha^{(n)}_{[LI]} N(I)^{-\beta}N(L)^{1-\beta} =N(I)^{-\beta}\sum_L
\alpha^{(n)}_{[LI]}N(L)^{1-\beta}\\=N(I)^{-\beta}
\sum_{\gamma\in\Gamma} \alpha^{(n)}_{[I]\gamma}\zeta^{(n)}_\gamma
(\beta -1).\eglnoz Dividing by the right hand side of equation
(\ref{eq1}) above and using Theorem \ref{den}, we see that this last
expression converges to $N(I)^{-\beta}$ as $n\to \infty$, proving
that \eqref{values} holds when $1 < \beta \leq 2$.

Let us now prove existence in this case. Since $\cD_n$ is essential
in $\bar{\cD}_n$ there is a natural embedding
$\bar{\cD}_n\hookrightarrow \ell^\infty (\Spec \cD_n)$ and we know
that $\Spec \cD_n=\bigsqcup_{I\in \cI_n}R/I$. The minimal
projections in $\cD_n$ are the $\delta_{I,n}^x,\,I\in \cI_n,\,x\in
R/I$. Thus any $d$ in $\bar{\cD}_n$ is represented by an
$\ell^\infty$-function $(I,x) \mapsto \lambda_I^x( d)$  uniquely
defined by $d \delta_{I,n}^x  = \lambda_I^x( d) \delta_{I,n}^x$ on
$\Spec \cD_n$. Notice that for $d\in \cD_n$ one actually has $d=\sum
\lambda_I^x(d) \delta_{I,n}^x$.

We define a state $\vp_n$ on $\bar{\cD}_n$ by

$$\vp_n (d) =\,\frac{\sum_{I\in \cI_n,\,x\in R/I}\lambda_I^x(d)
N(I)^{-\beta}}{\sum_{I\in \cI_n}N(I)^{1-\beta}}.$$

One obviously has $\vp_n (u^xdu^{-x})=\vp_n (d)$.  Since
$\lambda_I^x(s_a d s_a^*) = \lambda _{J}^{y}(d)$ if $I = aJ$ and $x
= ay$, and  $\lambda_I^x(s_a d s_a^*) = 0$ otherwise, Lemma
\ref{inv} implies that $\vp_n (s_ads^*_a) =N(a)^{-\beta}\vp_n(d)$
for $a$ in $R^\times$ with $aR\in \cI_n$. Since we also have
$\vp_{n+1}|_{\bar{\cD}_n} =\vp_n$, the sequence determines a state
$\vp_\infty$ on
 $\bar{\cD} = \overline{\bigcup_n \bar\cD_n}$. We
define a state $\vp_\beta$ on $\cT[R]$ by $\vp_\beta=\vp_\infty\circ
E$ where $E:\cT[R]\to \bar{\cD}$ is the canonical  conditional
expectation. Then for an element $z=s_a^*du^xs_b$ one has
$$\vp_\beta (s_c z)=N(c)^{-\beta}\vp_\beta (zs_c)\quad \vp_\beta (u^yz)=
\vp_\beta(zu^y )\quad \vp_\beta (d'z)=\vp_\beta (zd')$$ for $c\in
R^\times$, $y\in R$, $d'\in \bar{\cD}$. This suffices to show that
$\vp_\beta$ is $\beta$-KMS. \eproof

\bremark Note that the above construction of the state  $\vp_\infty$
of $\bar\cD$ and thus of $\vp_\beta$ carries through for all
$\beta>1$. Note also that the state $\vp_\infty$ of $\bar\cD$  is
the infinite tensor product state $\vp_\infty =\bigotimes_P\vp_P$
over all prime ideals $P$ in $R$ of the states $\vp_P$ defined on
$\bar{\cD}_P$ by

$$\vp_P (d) =\,\frac{\sum_{n\geq 0,\,x\in R/P^n}\lambda_n^x(d)
N(P)^{-n\beta} }{\sum_{n\geq 0}N(P)^{n(1-\beta)}}$$ for $d=\sum
\lambda^x_n \delta_{P^n}^x$. For $\beta =1$ one takes $\vp_P$ to be
induced from normalized Haar measure on the $P$-adic completion
$R_P$. \eremark

\section{KMS-states for $\beta > 2$}\label{>2}

The basis for the study of KMS-states in this range is the natural
representation $\mu$ of $\cT[R]$ on the Hilbert space

$$ H_R = \bigoplus_{I \textrm{ ideal in } R} \ell^2 (R/I) $$

which has been used already in section \ref{D}.

Let $\cE_I$ denote the projection onto the subspace $\ell^2 (R/I)$
of $H$ and define the operator $\Delta$ on $H$ by $\Delta = \sum_I
N(I)^{-1}\cE_I$. Then $\Delta$ commutes with $\mu(u^x)$ for every
$x\in R$ and $\Delta \mu(s_a) = N(a)\inv \mu(s_a) \Delta$, hence the
dynamics $\sigma $ is implemented spatially by the unitary group
$t\mapsto \Delta^{it}$. Since $\tr (\Delta^\beta ) = \zeta(\beta-1)
$, the operator $\Delta^\beta$ is of trace class  for $\beta >2$ and
$$ \varphi (z) = \Tr (\mu (z)\Delta^\beta)/ \zeta(\beta - 1)$$
defines a $\beta$-KMS state $\varphi$ for each $\beta >2$. Denote by
$\Gamma$ the ideal class group of our number field $K$. The Hilbert
space $H$ splits canonically into a sum of invariant subspaces $H =
\bigoplus_{\gamma\in\Gamma} H_\gamma$, where $ H_\gamma =
\bigoplus_{I \in\gamma} \ell^2 (R/I)$. Denoting by $\mu_\gamma$ and
$\Delta_\gamma$ the restrictions of $\mu$ and $\Delta$ to $H_\gamma$
we obtain a decomposition of $\vp$ as a convex linear combination:
\[
\vp = \sum_{\gamma \in \Gamma} {\textstyle\frac{\zeta_\gamma (\beta
-1) } {\zeta(\beta -1 )}}\vp_\gamma
\]
in which the $\beta$-KMS state $\varphi_\gamma$  associated to the
class $\gamma$ is defined by
$$ \varphi_\gamma (z) = \Tr (\mu (z)_\gamma\Delta_\gamma^\beta)/\Tr
(\Delta_\gamma^\beta),$$

where $\Tr(\Delta_\gamma^\beta)$ is the corresponding  partial zeta
function $\zeta_\gamma(\beta-1)$. We will see below that this family
of $\beta$-KMS states on $\cT[R]$ parametrized by $\Gamma$ consists
of different states.\mn

To obtain the most general KMS-state, we have to consider a more
general family of representations of $\cT[R]$. We fix temporarily a
class $\gamma$ in the class group $\Gamma$ and we choose a reference
ideal $J=J_\gamma$ in this class.

Let $\tau$ be a tracial state on the C*-algebra $C^*(J \rtimes
R^*)$, where the semidirect  product is taken with respect to the
multiplicative action of the group of invertible elements (units)
$R^*$  on the additive group  $J$. Note that these traces form a
Choquet simplex \cite{Thoma}. By \cite[Corollary 5]{nes-tra} the
extreme points can be parametrized by pairs in which the first
component is an ergodic $R^*$-invariant probability measure $\mu$ on
the compact dual group $\hat J$ on which the isotropy is a constant
group $\mu$-a.e., and the second component is a character of that
isotropy group.

Denote by $(H_J,\pi_J,\xi_J)$ the GNS-construction for $\tau$, with
$H_J =L^2(C^*(J\rtimes R^*),\tau)$. If $I$ is another integral ideal
in the class $\gamma$, then there is $a\in K^\times$ such that
$I=aJ$. Multiplication by $a$ induces an isomorphism $J\to I$ which
commutes with the action of $R^*$, hence $(j,g) \mapsto (aj,g)$
induces an isomorphism of groups $J\rtimes R^* \cong I\rtimes R^*$
and of  C*-algebras $C^*(J \rtimes R^*) \cong C^*(I \rtimes R^*)$.
The trace $\tau_a$ on $C^*(I \rtimes R^*)$ obtained from $\tau$ via
this isomorphism is given by $\tau_a(  \delta_{(x,g)}) =
\tau(\delta_{(a\inv x,g)})$ where $\delta_{(x,g)} $ runs through the
canonical generators  of $C^*( I\rtimes R^*)$. If $aJ = bJ$, then
$ab\inv =g \in R^*$ and for every  $\delta_{(j,g')}$ in  $C^*(
I\rtimes R^*)$ we have
\[
\tau_b(\delta_{(j,g')}) = \tau(\delta_{(b\inv j,g')})  = \tau_a(
\delta_{(ab\inv j,g')})  = \tau_a(\delta_{(0,g)} \delta_{(j,g')}
\delta_{(0,g)} ^*)  = \tau_a (\delta_{(j,g')}),
\]
so $\tau_a$ does not depend on the choice of such an $a$, and we
denote it simply as $\tau_I$. From the isomorphism $J\rtimes R^*
\cong I \rtimes R^*$ we obtain an isomorphism $H_J\to H_I$
intertwining the representations $\pi_J$ and $\pi_I$, in which the
cyclic vector $\xi_J\in H_J$ is mapped to the corresponding cyclic
vector $\xi_I\in H_I$.

The representation $\pi_I$ of $C^*(I\rtimes R^*)$ can be induced to
a natural representation (which we also denote $\pi_I$) of $C^*(R
\rtimes R^*)$ on
$$\ell^2 (R/I,H_I)\cong \{f:R\to H_I \big|
f(x+y)=\pi_I(u^x) (f(y)),\,x\in I\}.$$

\begin{lemma}\label{mufromtau}
The direct sum representation $\pi_\tau := \bigoplus_{I \in \gamma}
\pi_I$ of $C^*(R\rtimes R^*)$ on the Hilbert space
\[ H_{\tau} = \bigoplus_{I\in \gamma} \ell^2 (R/I,H_I) \]
extends to a representation of $\cT [R]$ on the same Hilbert space.
\end{lemma}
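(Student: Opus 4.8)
The plan is to extend $\pi_\tau$ by writing down explicit operators for the remaining generators $s_a$, $a\in R^\times$, and $e_J$, $J$ an ideal of $R$, and then to check the defining relations Ta--Td directly on the summands $\ell^2(R/I,H_I)$ of $H_\tau$. The key ingredient, already supplied in the discussion preceding the lemma, is that for each $I\in\gamma$ and each $a\in R^\times$ the isomorphism of groups $I\rtimes R^*\cong aI\rtimes R^*$, $(j,g)\mapsto(aj,g)$, induces a unitary $U_a^I\colon H_I\to H_{aI}$ intertwining $\pi_I$ with $\pi_{aI}$ and sending $\xi_I$ to $\xi_{aI}$; since these group isomorphisms compose (the one for $ab$ is the composite of those for $a$ and for $b$), the $U_a^I$ satisfy the cocycle identity $U_a^{bI}U_b^I=U_{ab}^I$. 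First I would set, for $a\in R^\times$ and $I\in\gamma$, $W_a^I\colon \ell^2(R/I,H_I)\to\ell^2(R/aI,H_{aI})$ to be the isometry built from the map $R/I\to aR/aI\hookrightarrow R/aI$, $x\mapsto ax$, on the base together with $U_a^I$ on the coefficients, and define $\pi_\tau(s_a)=\bigoplus_{I\in\gamma}W_a^I$. Since $I\mapsto aI$ is injective on ideals of class $\gamma$, the ranges of the $W_a^I$ lie in pairwise distinct summands of $H_\tau$, so $\pi_\tau(s_a)$ is a well-defined isometry; restricted to units $a=g\in R^*$ one checks that it recovers the operator already prescribed by $\pi_\tau$, so the definition is consistent with the given representation. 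For an ideal $J$ I would let $\pi_\tau(e_J)$ be the orthogonal projection of $H_\tau$ onto $\bigoplus_{I\in\gamma,\,I\subseteq J}\ell^2(J/I,H_I)$, where $\ell^2(J/I,H_I)\subseteq\ell^2(R/I,H_I)$ denotes the functions supported on $J/I$.

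Next I would verify Ta--Td. Relation Ta for $u^x$ and the unit part is automatic since $\pi_\tau$ is a representation of $R\rtimes R^*$; $s_as_b=s_{ab}$ reduces to $W_a^{bI}W_b^I=W_{ab}^I$, which follows from the fact that composing multiplication by $a$ with multiplication by $b$ gives multiplication by $ab$ on the base, together with the cocycle identity for the $U$'s on the coefficients; and $s_au^x=u^{ax}s_a$ is the statement that $W_a^I$ intertwines translation by $x$ on $R/I$ with translation by $ax$ on $R/aI$ and that $U_a^I$ carries $\pi_I|_I$ to $\pi_{aI}|_{aI}$ along $j\mapsto aj$. Relation Tb is checked summand by summand: on $\ell^2(R/I,H_I)$ the operators $\pi_\tau(e_J)$ and $\pi_\tau(e_K)$ are the projections onto the $J/I$-supported and $K/I$-supported functions (and vanish unless $I\subseteq J$, resp. $I\subseteq K$), so their product is the projection onto functions supported on $(J/I)\cap(K/I)=(J\cap K)/I$, and $\pi_\tau(e_R)=1$. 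Relation Td is likewise summand-wise: translation by $x$ preserves the space of $J/I$-supported functions precisely when $x\in J$, while for $x\notin J$ it carries that space into the $(J-x)/I$-supported functions, which is orthogonal to the $J/I$-supported ones, forcing $e_Ju^xe_J=0$.

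The step I expect to require the most care is relation Tc, $s_ae_Js_a^*=e_{aJ}$. Both sides vanish on any summand $\ell^2(R/I',H_{I'})$ with $I'\not\subseteq aJ$; and if $I'\subseteq aJ$ then $aR\mid aJ\mid I'$ forces $I'=aI$ for the integral ideal $I=a^{-1}I'$, which still lies in $\gamma$ and satisfies $I\subseteq J$, so the only summands on which $\pi_\tau(e_{aJ})$ is nonzero are exactly the ranges of the relevant $W_a^I$. On such a summand $\pi_\tau(s_a)^*$ acts as the projection onto the range of $W_a^I$ followed by $(W_a^I)^{-1}$, so $s_ae_Js_a^*$ becomes the projection of the $aI$-summand onto the functions supported on $(aR/aI)\cap(aJ/aI)=aJ/aI$, which is precisely $\pi_\tau(e_{aJ})$ there. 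Granting all this, the universal property of $\cT[R]$ yields the desired extension. The genuine obstacle throughout is bookkeeping at the level of the coefficient Hilbert spaces: one must be sure that the twists $U_a^I$ are multiplicative in $a$ and intertwine the correct restrictions of the $\pi_I$, so that $s_as_b=s_{ab}$ and the covariance $s_au^x=u^{ax}s_a$ hold on the spaces $H_I$ and not merely on the index sets $R/I$; once the representation-theoretic input from the paragraph before the lemma is unwound, the rest is routine.
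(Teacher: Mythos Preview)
There is a genuine gap at the step where you assert that for units $g\in R^*$ your operator $W_g^I$ recovers the already-prescribed $\pi_\tau(s_g)$. It does not. Your $U_g^I\colon H_I\to H_I$ is by construction the GNS unitary implementing the automorphism $(j,h)\mapsto(gj,h)$ of $I\rtimes R^*$; since this automorphism is inner (conjugation by $(0,g)$) and $\tau_I$ is a trace, one computes $U_g^I(u^js_w\xi_I)=u^{gj}s_w\xi_I$, whereas $\pi_I(s_g)(u^js_w\xi_I)=s_gu^js_w\xi_I=u^{gj}s_{gw}\xi_I$. Thus $U_g^I$ and $\pi_I(s_g)$ differ by right multiplication by $s_g$ in the GNS picture; in particular $U_g^I$ fixes $\xi_I$ while $\pi_I(s_g)$ sends it to $s_g\xi_I$. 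Passing to the induced representation, $(\pi_\tau(s_g)f)(y)=\pi_I(s_g)f(g^{-1}y)$ while $(W_g^If)(y)=U_g^I f(g^{-1}y)$, so $W_g^I\neq\pi_\tau(s_g)$ unless $\tau$ is trivial on $R^*$. Your cocycle $(U_a^I)$ does produce \emph{a} representation of $\cT[R]$ on $H_\tau$, but not one extending $\pi_\tau$, which is what the lemma claims and what the subsequent use of the lemma requires.

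This is precisely the obstacle the paper's proof is designed to avoid. There is no cocycle $a\mapsto V_a^I$ over all of $R^\times$ that simultaneously satisfies $V_a^{bI}V_b^I=V_{ab}^I$ and restricts to $\pi_I(s_g)$ on units, because the latter does not fix $\xi_I$. The paper instead fixes a multiplicative cross section $a\mapsto\tilde a$ of $R^\times\to R^\times/R^*$, defines $S_{\tilde a}$ on the dense set $\{U^xS_w\xi_I\}$ by $S_{\tilde a}(U^xS_w\xi_I)=U^{\tilde a x}S_w\xi_{\tilde aI}$ (essentially your $W_{\tilde a}^I$, verified isometric via the trace identity $\tau_{\tilde aI}(u^{\tilde aj}s_w)=\tau_I(u^js_w)$), and then sets $S_a:=S_{\tilde a}S_g$ for $a=\tilde a g$. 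This forces agreement with the given $S_g$ on units, and $S_aS_b=S_{ab}$ follows because the cross section is multiplicative and each $S_{\tilde a}$ commutes with every $S_g$. To repair your argument you must splice the given unit action into your construction in this way rather than relying on the canonical $U_a^I$ across all of $R^\times$.
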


\begin{proof}
To simplify the notation let $U^x := \pi_\tau (u^x)$ for $x\in R$
and $S_g := \pi_\tau (s_g)$ for $g\in R^*$. We may view the cyclic
vector $\xi_I \in H_I$ as a vector in $\ell^2 (R/I,H_I)$ (supported
on the trivial class) which is  cyclic for the action of
$C^*(R\rtimes R^*)$ on $\ell^2 (R/I,H_I)$.

Next we define $S_a$ for $a\in \rx$.  By \cite[Lemma 1.11]{LavF}
there exists a multiplicative cross section of the quotient $\rx \to
\rx/R^*$ and thus we have a homomorphism $a \mapsto \tilde a$ of
$\rx$ into itself such that for each $a \in \rx$ there exists a
unique $g\in R^*$ with $a = \tilde a g$.

First we define $S_a$ for $a$ in the range of the cross section by
\[
S_{ \tilde a}  ( u^x s_w \xi_I ) := u^{\tilde{a}x} s_w
\xi_{\tilde{a}I}
\]
for $x\in R$ and $w \in R^*$. Since

\bglnoz \big(S_{\tilde a} \sum_i c_i U^{x _i}S_{w_i} \xi_I \,\big|\,
S_{\tilde a} \sum_j c_jU^{x _j}S_{w_j} \xi_I \big) &=& \big(  \sum_i
U^{\tilde a x _i}S_{w_i} \xi_{\tilde aI} \,\big|\, \sum_j U^{\tilde
a x _j}S_{w_j} \xi_{\tilde aI}  \big)\\ &=&  \sum_{i,j} \big(
S_{w_j}^*U^{-\tilde a x _j}U^{\tilde a x _i}S_{w_i} \xi_{\tilde aI}
\,\big|\,   \xi_{\tilde aI}  \big)
\\ &=&\sum_{\{i,j: x_i - x_j \in I\}} \tau_{\tilde aI} ( u^{\tilde  a
(x _i - x_j)}s_{w_i}s_{w_j}^*)
\\ &=& \sum_{\{i,j: x_i - x_j \in I\}} \tau_I( u^{(x _i - x_j)}s_{w_i}
s_{w_j}^*)
\\ &=& \big(  \sum_i c_i U^{x _i}S_{w_i} \xi_I \,\big|\,  \sum_j c_j
U^{x _j}S_{w_j} \xi_I \big) \eglnoz

because $\tau_I$ and $\tau_{\tilde a I}$ are traces satisfying
$\tau_{\tilde aI} (u^x s_w) = \tau_I (u^{\tilde a \inv x} s_w)$, the
map $S_{\tilde a}$ is isometric on a dense set and thus extends
uniquely  by linearity and continuity to an isometry $S_{\tilde a}$
of $\ell^2 (R/I,H_I)$ into $ \ell^2 (R/\tilde{a}I,H_{\tilde{a}I})$.
 For  general $a\in \rx$ we simply write $a =  \tilde a g$ and we let
$ S_a  :=  S_{\tilde a}S_g $.

For an ideal $L\subset R$, we view $\ell^2(L/I,\;H_I)$ as the
obvious subspace of $\ell^2(R/I, H_I)$ and we define $E_L$ to be the
orthogonal projection onto $\bigoplus_{I\in\gamma,I\subset L}
\ell^2(L/I,\;H_I)$.

It is easy to verify that $S$ is a representation of the semigroup
$\rx$ by  isometries and  that $E$ is a family of projections
representing the lattice of ideals of $R$, such that  $U$, $S$, and
$E$ satisfy the relations defining $\cT[R]$. Hence there is a
representation $\mu_\tau$ of $\cT[R]$ such that $\mu_\tau(u^x) =
U^x$, $\mu_\tau(s_a) = S_a$ and $\mu_\tau(e_I ) = E_I$.
\end{proof}

We  can use the representation $\mu_\tau$ to define a $\beta$-KMS
state as follows. Let $\cE_I$ denote the orthogonal projection onto
the subspace $\ell^2(R/I,H_I)$ and define a positive operator
$\Delta$ on $H_\tau$ by $\Delta=\sum N(I)^{-1}\cE_I$. Since $\Delta$
commutes with $U^x$ and with $E_I$, and since $\Delta S_a = N(a)S_a
\Delta$, the unitary group $t\mapsto \Delta^{it}$ implements the
dynamics, just as in our initial example, but when $H_{J_\gamma}$ is
not finite dimensional, the operator $\Delta^\beta$ is not of trace
class. Nevertheless,  we have $\sum_{I\in\gamma,\,x\in R/I}
(\Delta^\beta U^x\xi_I\,|\,U^x\xi_I) = \zeta_\gamma(\beta-1)$, and
setting

\bgl\label{kmsfromtau} \vp_{\gamma,\tau} (z)=
\frac{\sum_{I\in\gamma,\,x\in R/I} (\mu_\tau (z)\Delta^\beta
U^x\xi_I\,|\,U^x\xi_I)}{\sum_{I\in\gamma,\,x\in R/I} (\Delta^\beta
U^x\xi_I\,|\,U^x\xi_I)} \egl

yields a  $\beta$-KMS state for each $\beta >2$, by
\eqref{KMScond}.\mn

As before,  let $P_1, P_2, \ldots $ be an enumeration of the prime
ideals in $R$. When $\rho$ is a given representation of $\cT[R]$,
for each ideal $I$ in $R$, let $\tilde{\ve}_I$ denote the strong
operator limit of the decreasing sequence of projections
$\rho(\ve_{I P_1 P_2 \cdots P_n})$. The $\tilde{\ve}_I$ form a
family of pairwise orthogonal projections. Similarly, let $\tdi$ be
the strong limit of the decreasing family of projections
$\rho(\delta_{I,n}^0)$, as in the proof of Lemma~\ref{tend}, and let
$\tilde\delta_I^x:= \rho(u^x)\tilde\delta_I \rho(u^{-x})$. If $I$
and $L$ are ideals in $R$, then

\bgl\label{rem}
 \tilde{\delta}^x_I\rho(e_L)=\begin{cases}
 \tilde{\delta}^x_I & \text {if $I\subset L$ and $x\in L/I$}\\
 0 & \text{ otherwise,}
 \end{cases}
\egl

To see why, observe that as soon as $n$ is large enough that $\{P_1,
P_2,\cdots, P_n\}$ contains all the prime factors of $I$ and $L$,
$\delta_{I,n}^0 e_L=  \delta_{I,n}^0$ if $I\subset L$ and $x\in
L/I$, and is $0$ otherwise, from the description of $\bar{\cD}_n$ in
Lemma~\ref{prodel}.

\begin{lemma}\label{taufromphi}
Let $\mu_\tau$ be the representation constructed in Lemma
\ref{mufromtau} from a trace $\tau$ on $C^*(J_\gamma\rtimes R^*)$,
and let $U^x := \mu_\tau(u^x)$ and $S_a := \mu_\tau(s_a)$. Suppose
$I$ is an  ideal in $R$ and $x\in R$;
\begin{enumerate}
\item[(i)] if $I \in \gamma$, then $U^x\tdi U^{-x} = U^x E_I \cE_I U^{-x}$, the projection onto $U^xH_I$;
\item[(ii)] if $I \not\in \gamma$, then $U^x\tdi U^{-x} = 0$; and
\item[(iii)]  the trace $\tau$ is retrieved from $\vp_{\gamma, \tau}$ by conditioning to $\tdj$:
\[
 \tau (u^x s_g)  =N(J_\gamma)^\beta\zeta_\gamma(\beta-1) \vp_{\gamma,\tau} (\tdj U^x S_g \tdj) \qquad x\in J_\gamma,  \ \ g\in R^*.
\]
\end{enumerate}
\end{lemma}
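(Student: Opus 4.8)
The plan is to identify the projections $\tdi$ explicitly in the representation $\mu_\tau$ and then to read off (iii) from the defining formula \eqref{kmsfromtau}.

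\emph{Parts (i) and (ii).} I would begin from the identity $\delta_{I,n}^0 = e_I\,\ve_{IP_1P_2\cdots P_n}$ of \lemref{prodel} and compute the image of each factor under $\mu_\tau$. Applying $\mu_\tau$ to $f_J=\sum_{x\in R/J}u^x e_J u^{-x}$ and using that the $U^x$ permute the supports $(x+J)/L$, which partition $R/L$ whenever $L\subset J$, gives $\mu_\tau(f_J)=\sum_{L\in\gamma,\,L\subset J}\cE_L$; hence $\mu_\tau(\ve_{P^k})=\sum_{L\in\gamma,\,v_P(L)=k-1}\cE_L$, and since $\ve_{IP_1\cdots P_n}=\prod_{i=1}^n\ve_{P_i^{v_{P_i}(I)+1}}$ by \lemref{prodel} and the $\cE_L$ are mutually orthogonal, $\mu_\tau(\ve_{IP_1\cdots P_n})$ is the projection onto the sum of those $\ell^2(R/L,H_L)$ with $L\in\gamma$ and $v_{P_i}(L)=v_{P_i}(I)$ for all $i\le n$. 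Intersecting with $\mu_\tau(e_I)=E_I$ cuts this down to the projection onto the sum of the subspaces $\ell^2(I/L,H_L)$ over the same set of $L$. As $n\to\infty$ these projections decrease, and since an ideal is determined by the collection of all of its $P$-adic valuations, the intersection of the ranges is $\ell^2(I/I,H_I)$ when $I\in\gamma$ and $\{0\}$ when $I\notin\gamma$. In the first case $\ell^2(I/I,H_I)$ is precisely the range of $E_I\cE_I$ --- the copy of $H_I$ supported on the trivial coset --- so $\tdi=E_I\cE_I$; conjugating by $U^x$, which commutes with $\cE_I$, gives (i), and the second case is (ii).

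\emph{Part (iii).} I would substitute $z=\tdj U^x S_g\tdj$ into \eqref{kmsfromtau}. Since $U^y\xi_I\in\ell^2(R/I,H_I)=\cE_I H_\tau$ we have $\Delta^\beta U^y\xi_I=N(I)^{-\beta}U^y\xi_I$; and by (i) the projection $\tdj$ kills $U^y\xi_I$ unless $I=J_\gamma$ and $y=0$ in $R/J_\gamma$, in which case $\tdj U^y\xi_I=\xi_{J_\gamma}$. Hence only the term $I=J_\gamma$, $y=0$ survives in the numerator of \eqref{kmsfromtau}, while its denominator equals $\zeta_\gamma(\beta-1)$, so
\[
\vp_{\gamma,\tau}(\tdj U^x S_g\tdj)=\frac{N(J_\gamma)^{-\beta}}{\zeta_\gamma(\beta-1)}\,(U^x S_g\,\xi_{J_\gamma}\,|\,\xi_{J_\gamma}).
\]
It remains to note that $(U^x S_g\,\xi_{J_\gamma}\,|\,\xi_{J_\gamma})=\tau(u^x s_g)$ for $x\in J_\gamma$, $g\in R^*$: this is exactly the matrix-element identity established in the proof of \lemref{mufromtau} (the step evaluating $(U^{x_i}S_{w_i}\xi_I\,|\,U^{x_j}S_{w_j}\xi_I)$ to $\tau_I(u^{x_i-x_j}s_{w_i}s_{w_j}^*)$ when $x_i-x_j\in I$), specialized to $I=J_\gamma$, $x_i=x$, $w_i=g$, $x_j=0$, $w_j=1$, together with $\tau_{J_\gamma}=\tau$. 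Rearranging the displayed equation then yields the claimed formula.

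The main obstacle is the bookkeeping in the identification of $\tdi$: one must check that the strong limit of the decreasing projections $\mu_\tau(\delta_{I,n}^0)$ is the projection onto the intersection of their ranges (automatic for decreasing projections) and, more carefully, that this intersection is the orthogonal direct sum over the surviving ideals $L$ --- which holds because the $\ell^2(R/L,H_L)$ lie in pairwise orthogonal summands of $H_\tau$, so a vector lies in the intersection precisely when each of its components does. Once $\tdi$ has been pinned down, parts (ii) and (iii) follow quickly.
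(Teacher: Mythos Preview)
Your argument is correct and follows essentially the same strategy as the paper: identify $\tdi$ as $E_I\cE_I$ via the explicit action on the summands $\ell^2(R/L,H_L)$, and then read off (iii) by noting that only the $(I,y)=(J_\gamma,0)$ term survives in \eqref{kmsfromtau}. The only notable difference is in (ii): you obtain $\tdi=0$ for $I\notin\gamma$ by directly computing that $\bigcap_n S_n=\emptyset$ (no $L\in\gamma$ has all the $P$-adic valuations of $I$), whereas the paper deduces it more quickly from (i) by observing that the ranges of the $\tilde\delta_L^y$ with $L\in\gamma$, $y\in R/L$ already span $H_\tau$ and are orthogonal to $\tdi$ --- both arguments are fine, the paper's just reuses (i) as a shortcut. (A tiny citation point: the factorization $\ve_{IP_1\cdots P_n}=\prod_i\ve_{P_i^{v_{P_i}(I)+1}}$ is \lemref{prop}(c), not \lemref{prodel}.)
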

\begin{proof}
For part (i),  notice  that when $I\in \gamma$, then $\cE_I = \tilde
\varepsilon_I := \lim_{n\to \infty} \mu_\tau
(\varepsilon_{IP_1P_2\cdots P_n})$, then multiply by $E_I$ and
translate with $x\in R/I$.

For part (ii), recall that if $I$ and $I'$ are different ideals,
then the projections $\tdi^x$ and $\tilde\delta_{I'}^{x'}$ are
mutually orthogonal. Since the Hilbert space $H_\tau  = \bigoplus_{I
\in \gamma} \ell^2 (R/I,H_I) $ is generated by the ranges of the
projections $\tdi^x$ with $I\in \gamma$ and $x\in R/I$, it follows
that  $\tilde\delta_{I'}^{x'} =0$ whenever $I' \not \in \gamma$.

Finally, notice that $H_{J_\gamma}$ viewed as a subspace of $H_\tau$
is invariant for the action of $C^*(J_\gamma\rtimes R^*)$ and, by
construction, the restriction of $\mu_\tau$  to $C^*(J_\gamma\rtimes
R^*)$  and to this subspace is the GNS representation of $\tau$,
with cyclic vector $\xi_{J_\gamma}$. Since $\tdj =E_{J_\gamma}
\cE_{J_\gamma}$ is the projection onto $H_{J_\gamma}$, the sum in
equation \eqref{kmsfromtau} has only one term, giving the identity
in part (iii).
\end{proof}

It turns out that to parametrize the $\beta$-KMS states in the
region $\beta >2$ all we need to do is combine states constructed
from different ideal classes. \btheo Suppose $\beta >2$ and  choose
a fixed reference ideal $J_\gamma\in\gamma$ for each $\gamma$ in the
class group $\Gamma$ of $K$. For each tracial state $\tau$  of
$\bigoplus_\gamma C^*(J_\gamma\rtimes R^*)$ write $\tau = c_\gamma
\tau_\gamma$ as a convex linear combination of traces on the
components and define $\vp_\tau := \sum_\gamma c_\gamma
\vp_{\gamma,\tau_\gamma}$ using equation \eqref{kmsfromtau}. Then
the  map $\tau \mapsto \vp_\tau$  is a continuous  affine
isomorphism of the Choquet simplex of tracial states of
$\bigoplus_\gamma C^*(J_\gamma\rtimes R^*)$ onto the simplex of
$\beta$-KMS states for $\cT[R]$.

Going in the opposite direction, the $\gamma$-component of the trace
$\tau$ corresponding to a given $\beta$-KMS state $\vp$ is obtained
by conditioning (the vector state extension of) $\vp$ to  $\tdj$,
\[c_\gamma\tau_{\vp,\gamma}(u^xs_g)  := N(J_\gamma)^\beta \zeta_\gamma
(\beta-1)\tilde\vp(\tdj U^x S_g \tdj),\] where $c_\gamma :=
N(J_\gamma)^\beta \zeta_\gamma(\beta-1)\tilde\vp(\tdj)$. \etheo
\begin{proof}
Since  $\vp_{\gamma,\tau_\gamma}$  is a $\beta$-KMS state for each
$\gamma$, so is $\vp_\tau = \sum_\gamma c_\gamma
\vp_{\gamma,\tau_\gamma}$. To see that $\tau$ is obtained from
$\vp_\tau$ by conditioning to  $\tdj$, assume $c_\gamma \neq 0$
(otherwise skip $\gamma$). Then Lemma~\ref{taufromphi}(iii) implies
that
\[
c_\gamma\tau_\gamma(u^x s_g) = N(J_\gamma)^\beta \zeta_\gamma(\beta
-1) \vp_{\gamma,\tau_\gamma}(\tdj U^x S_g \tdj)
\]
which is equal to $ N(J_\gamma)^\beta \zeta_\gamma(\beta -1)
\vp_\tau(\tdj U^x S_g \tdj)$ by Lemma~\ref{taufromphi}(ii). This
proves  that the map $\tau \mapsto \vp_\tau$ is injective. Next we
show it is surjective.\mn

Suppose $\vp$ is a $\beta$-KMS state and let $\cT[R]$ be represented
on $H_\vp$ in the GNS-construction for $\varphi$. As usual, we
denote by $\tilde\vp$ the vector state extension of $\vp$ to
$\cL(H_\vp)$, and we also write $\pi_\vp(u^x) = U^x$,  $\pi_\vp(s_a)
= S_a$ for simplicity of notation.

We show next that $\bigoplus_{I} \tilde{\ve}_I (H_\vp)=H_\vp$. Using
Lemma \ref{dense} we obtain, as soon as $I$ is in the semigroup
$\cI_n$ generated by $P_1,P_2,\cdots, P_n$,

$$ \vp (1) = \sum_{I\in \cI_n}\vp (\ve_{I P_1
P_2 \cdots P_n})=\sum_{\gamma\in \Gamma}\sum_{ I\in \cI_n\cap
\gamma}\vp (\ve_{I P_1P_2\cdots P_n})$$

where, according to Lemma \ref{mult} $$\vp (\ve_{I P_1P_2\cdots
P_n})= N(J_\gamma)^{\beta-1} N(I)^{1-\beta}\vp (\ve_{J_\gamma
P_1P_2\cdots P_n})$$

for $I\in \cI_n\cap \gamma$. In the limit $n\to \infty$ this gives
\[
\vp (1) = \sum_{\gamma\in\Gamma}N(J_\gamma)^{\beta -1}\zeta_\gamma
(\beta-1) \tilde\vp (\tilde{\ve} _{J_\gamma}) =
\sum_{\gamma\in\Gamma}N(J_\gamma)^{\beta }\zeta_\gamma (\beta-1)
\tilde\vp (\tdj)
\]
where $\zeta_\gamma $ is the partial $\zeta$-function $\zeta_\gamma
(t) = \sum_{I\in \gamma}N(I)^{-t}$, which converges for $t> 1$.\mn

Let $F$ denote the orthogonal complement of $\bigoplus_{I}
\tilde{\ve}_I (H_\vp)$ and $\psi$ the restriction of $\tilde\vp$ to
$\pi_\vp(\cT[R])|_F$. Since $\psi$ is again a $\beta$-KMS functional
(see \cite{BrRo}, 5.3.4 and 5.3.29) and since the $F  \tilde\ve
_{J_\gamma} F=0$, the above identity applied to $\psi$ shows that
$\psi (F)=0$ and thus $F=0$, proving $\bigoplus_{I} \tilde{\ve}_I
(H_\vp)=H_\vp$. \mn

Since $\sum_{\gamma\in\Gamma}\sum_{I\in \gamma} \sum_{x\in R/I}
\tilde\delta_I^x = \sum_{\gamma\in\Gamma}\sum_{I\in \gamma}
\tilde\ve_I = 1$ in the GNS representation of $\vp$ and since
$\tilde\delta_I^x$ is in the centralizer of $\tilde\vp$, we have
\bgl\label{reconstruction} \tilde\vp (\,\cdot\,)  = \tilde\vp
(\;\cdot\, \sum_{\gamma, I, x} \tilde\delta_I^x) =  \sum_{\gamma \in
\Gamma}\sum_{I\in \gamma} \sum_{x\in R/I} \tilde\vp(
\tilde\delta_I^x \,\cdot\, \tilde\delta_I^x). \egl

The projection  $\tdi$ commutes with $U^x S_g$ for $x\in I$ and
$g\in R^*$, hence the canonical map $u^xs_g \mapsto \tdi U^x
S_g\tdi$  determines a homomorphism of $C^*(I\rtimes R^*)$ to the
corner $\tdi \cT[R] \tdi$. This homomorphism is surjective because
\[
\tdi S_a^* d U^y S_b \tdi =S_a^* \tilde\delta_{aI}
\tilde\delta_{bI}^y  d  U^y  S_b
\]
is nonzero only if $aI = bI$ and $y \in aI = bI$, i.e. only if $b =
ga$ for some $g\in R^*$ and $y' = y/a \in I$, in which case the
whole expression reduces to $\tdi    z_d  U^{y'}  S_a^* S_b = z_d
\tdi   U^{y'} S_g$, where $z_d$ is a scalar.

Next we show how to recover each $\tilde\vp( \tdi^x
\,\cdot\,\tdi^x)$ from  $\tilde\vp( \tdj^x \,\cdot\, \tdj^x)$. For
each  $I \in \gamma$  there exist $a_I$ and $b_I$ in $\rx$ such that
$(a_I/b_I) J_\gamma = I$. By Lemma~\ref{inv}
\[
 \tilde{\delta}_I^x := U^x     \tilde\delta_{I}   U^{-x}  = U^x
 S^*_{b_I}S_{a_I} \tdj S_{a_I}^* S_{b_I}  U^{-x} .
\]

Using  the KMS-condition we obtain
 \bglnoz
 \tilde\vp( \tilde\delta_I^x \,\cdot\, \tilde\delta_I^x) &=&
\tilde\vp( U^x S^*_{b_I}S_{a_I} \tdj S_{a_I}^* S_{b_I}  U^{-x}
\,\cdot \, U^x    S^*_{b_I}S_{a_I} \tdj S_{a_I}^* S_{b_I}  U^{-x}  )\\
&=& N(a_I/b_I)^{-\beta}\tilde\vp( \tdj S_{a_I}^* S_{b_I}  U^{-x}
\,\cdot\,
U^x    S^*_{b_I}S_{a_I} \tdj )\\
&=& N(I)^{-\beta} N(J_\gamma)^\beta \tilde\vp(  \tdj S_{a_I}^*
S_{b_I}  U^{-x} \,\cdot\,  U^x    S^*_{b_I}S_{a_I} \tdj  ). \eglnoz
Notice that the choice of $a_I$ and $b_I$ does not affect the result
because of Lemma \ref{unit} and because for $g\in R^*$ the unitary
$S_g$ commutes with $\tdj$ and centralizes $\vp$. Hence every
$\beta$-KMS state for $\beta>2$ is determined by the collection of
conditional functionals $\{ \tilde \vp(\tdj \, \cdot \, \tdj)
\,\big|\,\gamma \in \Gamma\}$.

 The state $\vp$ gives rise to traces as follows. First let $c_\gamma := N(J_\gamma)^\beta \zeta_\gamma(\beta-1)\tilde\vp(\tdj)$
and recall that $\sum_\gamma c_\gamma =1$ from above. When $c_\gamma
\neq 0$,  set
  \[
 c_\gamma \tau_{\gamma,\vp} (u^x s_g) := N(J_\gamma)^\beta \zeta_\gamma(\beta-1)\tilde\vp(\tdj U^xS_g \tdj),
 \]
 which defines a tracial state $\tau_{\gamma,\vp}$ on $C^*(J_\gamma \rtimes R^*)$, by the KMS condition.
 This  shows that the given $\beta$-KMS state $\vp$ arises as $\vp_\tau$ from the trace $\tau := \sum_\gamma c_\gamma \tau_{\gamma,\vp}$ that it determines on $\bigoplus_{\gamma\in \Gamma}C^*(J_\gamma\rtimes R^*)$, proving the surjectivity of the
 map $\tau \mapsto \vp_\tau$.

The map $\vp \mapsto \tau$ is clearly affine and continuous in the
weak*-topology, and since the spaces of traces and of $\beta$-KMS
states are compact Hausdorff, the map is a homeomorphism.
\end{proof}

\bremark
\begin{enumerate}
\item Our parameter space of traces is obviously not canonical because it depends on the arbitrary choice of representative ideals $J_\gamma$ in each class. However, the traces are determined up to canonical isomorphisms of the underlying C*-algebras, as discussed at the beginning of the section.

\item  The $\beta$-KMS states can be evaluated explicitly on products of the form $s_a^* e^z_J u^y s_b$; since these have dense linear span, this characterizes $\vp_\tau$.
Assume first $\tau$ is supported on a single ideal class $\gamma \in
\Gamma$. By \eqref{kmsfromtau} we may assume $a\inv b = g\in R^*$,
for otherwise $\vp_\tau(s_a^* e^z_J u^y s_b) =0$. Then \bglnoz
\qquad\vp_{\gamma,\tau}  (s_a^* e^z_J u^y s_b) &=&
\frac{1}{\zeta_\gamma(\beta-1)}\sum_{I\in\gamma}\sum_{x\in R/I}
(S_a^* E^z_J U^y S_b
\Delta^\beta  U^x\xi_I\,|\,U^x\xi_I)\\
&=& \frac{1}{\zeta_\gamma(\beta-1)}\sum_{I\in\gamma}\sum_{x\in R/I}
(U^{-x}S_a^* E^z_J U^y S_bU^x
\Delta^\beta \xi_I\,|\,\xi_I)\\
&=&
\frac{1}{\zeta_\gamma(\beta-1)}\sum_{I\in\gamma}\sum_{x\in R/I} N(I)^{-\beta}(U^{-x}S_a^* E^z_J U^y S_bU^x \xi_I\,|\,\xi_I)\\
&=&
\frac{1}{\zeta_\gamma(\beta-1)}\sum_{I\in\gamma}\sum_{x\in R/I} N(I)^{-\beta}(S_a^* U^{-ax}E^z_J U^{y + agx}S_g S_a\xi_I\,|\,\xi_I)\\
&=&
\frac{1}{\zeta_\gamma(\beta-1)}\sum_{I\in\gamma}\sum_{x\in R/I} N(I)^{-\beta}( E^{z-ax}_J U^{y+ agx- ax}S_g \xi_{aI}\,|\,\xi_{aI}).\\
 \eglnoz

\noindent The nontrivial contributions come from terms with
\begin{itemize}
\item[]$z-ax \in J$,
\item[]  $y+ a(g-1)x\in {aI}$ and
\item[]  $aI \subset J$.
\end{itemize}
 Thus, recalling that $\xi_{aI}$ is the cyclic vector for the
GNS representation of $\tau_I$ (the notation is from the
construction leading up to Lemma~\ref{mufromtau}),
 the sum reduces to
\[
\vp_{\gamma,\tau}  (s_a^* e^z_J u^y s_b) =
\frac{1}{\zeta_\gamma(\beta-1)}\sum_{I\in\gamma,\, aI \subset
J}\sum_{x\in P_I} N(I)^{-\beta}\tau_{aI}( u^{y+ ax(g-1) }s_g)
\]
where $P_I := \{x\in R/I\, \big| \, ax-z\in J/I, \, y+ax(g-1) \in
I\}$.

\noindent If we now start with a trace $\tau = \sum_{\gamma \in
\Gamma} c_\gamma \tau_\gamma$, then the values of the corresponding
$\beta$-KMS state are given by
\[
\vp (s_a^* e^z_J u^y s_b) = \sum_{\gamma\in
\Gamma}\sum_{I\in\gamma,\, aI \subset J}\sum_{x\in P_I}
\frac{c_\gamma N(I)^{-\beta}}{\zeta_\gamma(\beta-1)} \tau_{\gamma,
\,aI}( u^{y+ a(g-1)x }s_g) .
\]

\item
The $\infty$-KMS states are, by definition, the weak-* limits as
$\beta \to \infty$ of $\beta$-KMS states, and they too can be
computed explicitly, by taking limits in the above formula. Notice
that $\frac{N(I)^{-\beta}}{\zeta_\gamma(\beta -1)} \to 0$ as $\beta
\to\infty$, except when $I$ is norm-minimizing in its class, in
which case the limit is $k_\gamma\inv$ (with $k_\gamma$ the number
of norm-minimizing ideals in the class $\gamma$). Thus, $\infty$-KMS
states are still indexed by traces $\tau=\sum_\gamma c_\gamma
\tau_\gamma$  of $\bigoplus_\gamma C^*(J_\gamma\rtimes R^*)$, and
are given by
\[
\vp (s_a^* e^z_J u^y s_b) = \sum_{\gamma\in
\Gamma}\sum_{I\in\underline\gamma, \, aI \subset J}\sum_{x\in P_I}
{c_\gamma k_\gamma\inv} \tau_{\gamma, \,aI}( u^{y+ a(g-1)x }s_g) .
\]
where the sum is now over the subset $\underline{\gamma} $ of
norm-minimizing ideals in $\gamma$.
\end{enumerate}\eremark\

\bremark\label{Rtimes}
 As a much simpler ``toy model" for the dynamical system
 $(\cT[R], (\sigma_t))$ we can also consider the Toeplitz algebra
 $\cT[\rx]$ associated with the multiplicative
semigroup $\rx$ of $R$, i.e. the C*-algebra generated by the left
regular representation of this semigroup. It is generated by
isometries $s_a$, $a \in \rx$ and carries an analogous one-parameter
automorphism group $(\sigma^\times_t)$ defined by
$\sigma^\times_t(s_a) = N(a)^{it}s_a$. Since $R^\times$ is a split
extension of $R^\times/R^*$ by $R^*$, \cite[Lemma 1.11]{LavF}, we
see that $\cT[R^\times]$ is the tensor product of $C^*(R^*)$ and the
Toeplitz algebra $\cT[R^\times /R^*]$ for the semigroup $R^\times
/R^*$ of principal integral ideals. In the case where $R$ is a
principal ideal domain, $\cT[\rx]$ is then simply an infinite tensor
product of the ordinary Toeplitz algebras (i.e. universal
C*-algebras generated by a single isometry) generated by the
isometries associated to the primes in $R$, and of $C^*(R^*)$. In
this case the situation is nearly trivial. An easy exercise shows
that the KMS-states for each $\beta>0$ are labeled by the states of
$C^*(R^*)$.

However, in the case of a non-trivial class group, we obtain a
non-trivial C*-dynamical system, essentially, because there is an
`interaction' between the classes. The methods and results of the
last two sections (including Theorem 5.6) immediately lead to a
determination of its KMS structure. One finds that for $\beta = 0$
there is a family of $0$-KMS states ($\sigma$-invariant traces)
indexed by the $\sigma$-invariant states on $C^*(K^\times)$ (such a
state has to factor through the quotient of $\cT[R]$ where each of
the generators $s_a$ becomes unitary - this quotient is exactly
$C^*(K^\times)$). For each $\beta$ in the range $0 < \beta \leq 1$
the $\beta$-KMS states correspond exactly to the states of
$C^*(R^*)$ (there is a unique $\beta$-KMS state on $\cT[R^\times
/R^*]$ which can be combined with an arbitrary state on the tensor
factor $C^*(R^*)$). For each $\beta$ in the range $1 < \beta <
\infty$ the simplex of KMS states splits in addition over the class
group $\Gamma$. Thus the KMS states in that range are labeled by the
states of $C^*(R^*\times\hat{\Gamma})$.

We note that it is known that the class group $\Gamma$ for $K$ is
determined already by the semigroup $R^\times$. In fact $\Gamma$
coincides with the semigroup class group defined by the ideals in
this semigroup (i.e. the subsets invariant under multiplication by
all elements), cf. \cite[section 2.10]{GeHa}.\eremark

\section{Ground states}\label{gs}

Recall that  a state $\varphi$ on a C*-dynamical system $(\mathfrak
B, (\sigma_t)_{t\in \Rz})$ is a ground state if and only if the
function
\[
z \mapsto \varphi( w\, \sigma_z( w') )
\]
is bounded on the upper half plane on a set of analytic vectors
$w,w'\in\mathfrak B$ with dense linear span.

\begin{proposition}\label{tent}
Let  $\varphi$ be a state of $\cT[R]$. Then the following are
equivalent:
\begin{enumerate}
\item $\varphi$ is a ground state;
\item  for all $d\in\bar{\cD}$,
$a,b\in R^\times$, $x\in R$ and $w\in \cT[R]$ we have $\varphi( w\,
s_a^* d u^x s_b) = 0 $, whenever $N(a)> N(b)$;
\item for $a,b\in R^\times$, $x\in R$, we have $\vp (s_b^*u^xs_as_a^*
u^{-x}s_b)=0$, whenever $N(a)> N(b)$ (note that the expression under
$\vp$ depends on $x$ only via its image in $R/aR$);\end{enumerate}
\end{proposition}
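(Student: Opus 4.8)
The plan is to prove $(1)\Leftrightarrow(2)$ and $(2)\Leftrightarrow(3)$. The common backbone is that, by Lemma~\ref{elem}(b), the linear span of the monomials $z=s_a^*du^xs_b$ with $d\in\bar{\cD}$ is dense in $\cT[R]$; each such $z$ is entire analytic for $\sigma$ with $\sigma_\zeta(z)=(N(b)/N(a))^{i\zeta}z$ for $\zeta\in\Cz$ (using $\sigma_\zeta(u^x)=u^x$, $\sigma_\zeta(d)=d$, $\sigma_\zeta(s_a^*)=N(a)^{-i\zeta}s_a^*$); and the scalar factor has modulus $|(N(b)/N(a))^{i\zeta}|=(N(b)/N(a))^{-\Img\zeta}$, which stays bounded on the closed upper half plane precisely when $N(b)\geq N(a)$.

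For the easy implications: $(2)\Rightarrow(1)$ — take as the dense set of analytic vectors the span of the monomials; for monomials $w$ and $w'=s_a^*du^xs_b$ one has $\varphi(w\,\sigma_\zeta(w'))=(N(b)/N(a))^{i\zeta}\varphi(ww')$, which vanishes identically if $N(a)>N(b)$ by (2) and is otherwise bounded in modulus on the upper half plane by $\|w\|\,\|w'\|$; extending bilinearly to the dense span shows $\varphi$ is a ground state. $(2)\Rightarrow(3)$ — apply (2) with $d=e_R=1$, with $x$ replaced by $-x$, and with $w=s_b^*u^xs_a$, and use $s_as_a^*=e_{aR}$; this gives $\varphi(s_b^*u^xs_a\,s_a^*u^{-x}s_b)=0$ whenever $N(a)>N(b)$, which is (3) (the dependence of $s_b^*u^xs_a s_a^*u^{-x}s_b=s_b^*e_{aR}^{\,x}s_b$ on $x$ only through $x+aR$ follows from Td).

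For $(3)\Rightarrow(2)$: by the Cauchy--Schwarz inequality for the state $\varphi$, $|\varphi(wz)|^2\leq\varphi(ww^*)\varphi(z^*z)$, so it suffices to show $\varphi(z^*z)=0$ for $z=s_a^*du^xs_b$ with $d\in\bar{\cD}$ and $N(a)>N(b)$. Compute $z^*z=s_b^*u^{-x}\,d^*e_{aR}d\,u^xs_b$ using $s_as_a^*=e_{aR}$; since $\bar{\cD}$ is commutative and $d,e_{aR}\in\bar{\cD}$, we get $d^*e_{aR}d=d^*d\,e_{aR}\leq\|d\|^2e_{aR}$, hence $0\leq z^*z\leq\|d\|^2\,s_b^*u^{-x}e_{aR}u^xs_b=\|d\|^2\,s_b^*u^{-x}s_as_a^*u^xs_b$, using that conjugation by the unitary $u^x$ and compression by $s_b$ preserve the order. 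Applying $\varphi$ and then (3) for the exponent $-x$ gives $0\leq\varphi(z^*z)\leq\|d\|^2\varphi(s_b^*u^{-x}s_as_a^*u^xs_b)=0$, as wanted.

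The remaining implication $(1)\Rightarrow(2)$ is where one must use more than the bare definition, since being a ground state only provides boundedness on \emph{some} dense set of analytic vectors, not the specific ones we care about; this is the main obstacle. The clean route is to invoke the standard characterization (see \cite{BrRo}, \S5.3): $\varphi$ is a ground state if and only if, in the GNS representation $(\pi_\varphi,H_\varphi,\xi_\varphi)$, the self-adjoint generator $H$ of the canonical unitary implementation $V_t$ of $\sigma$ (normalized so $V_t\xi_\varphi=\xi_\varphi$) is a positive operator. Granting this, fix $z=s_a^*du^xs_b$ with $N(a)>N(b)$; from $\sigma_t(z)=(N(b)/N(a))^{it}z$ we get $V_t\pi_\varphi(z)\xi_\varphi=\pi_\varphi(\sigma_t(z))\xi_\varphi=(N(b)/N(a))^{it}\pi_\varphi(z)\xi_\varphi$, so $\pi_\varphi(z)\xi_\varphi$ is an eigenvector of $H$ for the eigenvalue $\log(N(b)/N(a))<0$; positivity of $H$ forces $\pi_\varphi(z)\xi_\varphi=0$, whence $\varphi(wz)=\langle\pi_\varphi(w)\pi_\varphi(z)\xi_\varphi,\xi_\varphi\rangle=0$ for all $w\in\cT[R]$, which is (2). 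Beyond invoking this characterization, the only thing to watch is keeping the direction $N(a)>N(b)$ aligned with the sign of the eigenvalue and of $\Img\zeta$; everything else is bookkeeping with the grading of $\cT[R]$ induced by $\sigma$.
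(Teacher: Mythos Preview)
Your proof is correct and follows essentially the same route as the paper: the equivalence $(2)\Leftrightarrow(3)$ via Cauchy--Schwarz and the bound $(s_a^*du^xs_b)^*(s_a^*du^xs_b)\leq\|d\|^2 s_b^*u^{-x}s_as_a^*u^xs_b$ is exactly the paper's argument, and your $(2)\Rightarrow(1)$ is the same computation with the monomials as dense analytic vectors.

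The only point of divergence is $(1)\Rightarrow(2)$. The paper simply writes ``we may choose $w'$ of the form $s_a^*du^xs_b$'' and reads off the conclusion, implicitly using that the ground-state condition, once it holds on \emph{some} dense set of analytic vectors, holds on \emph{every} such set. You flag this as the genuine issue and resolve it by invoking the equivalent characterization from \cite{BrRo} (positivity of the generator $H$ in the GNS representation), then observing that $\pi_\varphi(s_a^*du^xs_b)\xi_\varphi$ would be an eigenvector of $H$ with negative eigenvalue $\log(N(b)/N(a))$. This is a cleaner and more explicit justification of the step the paper takes for granted; the content is the same, but your version makes the appeal to \cite{BrRo} visible rather than tacit.
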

\begin{proof}
$\varphi$ is a ground state if and only if the function
\[
z \mapsto \varphi( w\, \sigma_z( w') )
\]
is bounded on the upper half plane on a set of analytic vectors
$w,w'\in\cT[R]$ with dense linear span. We may choose $w'$ of the
form $s_a^*  du^x s_b$.

We have
\[
\varphi( w\, \sigma_z( s_a^*  d u^x s_b) ) = N(b/a)^{iz} \varphi( w
\,( s_a^*  d u^x s_b) ),
\] This function is bounded on the upper half plane if and only if it
vanishes when $N(b/a) <1$. This proves that (1) and (2) are
equivalent.

By the Cauchy-Schwarz inequality (2) is equivalent to the fact that
$$\varphi( ( s_a^*  d u^x s_b)^*( s_a^*  d u^x s_b) )=0$$ for all
$a,b,x,d$. However $( s_a^*  d u^x s_b)^*( s_a^*  d u^x s_b)\leq
\|d\|^2s_b^*u^xs_as_a^* u^{-x}s_b$. This shows that (2) and (3) are
equivalent.\eproof

We will see that the ground states on $\cT[R]$ are supported on
projections corresponding to what we call ``norm-minimizing
ideals''. We say that an ideal $I$ in $R$ is norm-minimizing if for
any other ideal $J$ in the same ideal class we have $N(I)\leq N(J)$.
The use of norm-minimizing ideals was suggested by work in
preparation by Laca-van Frankenhuijsen.

Recall from Lemma \ref{frac} (a) that every ideal $I$ in $R$ can be
written in the form $\frac{a}{b}R\cap R$ with $a,b\in R^\times$.
\begin{lemma}\label{mini}\begin{enumerate}
\item[(i)]If a product $J = I L$ is norm-minimizing, then so are $I$
and $L$.
\item[(ii)] The prime ideals that are norm-minimizing generate the ideal
class group.
\item[(iii)] If $I = \frac{a}{b}R\cap R$ is norm-minimizing, then
$N(a)\leq N(b)$.
\end{enumerate}
\end{lemma}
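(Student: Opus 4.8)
The plan is to handle the three parts in order, using only multiplicativity of the ideal norm $N$ (recalled at the start of Section~\ref{beta1}) together with the elementary ideal arithmetic already exploited in Lemma~\ref{frac}. For (i) I would argue by contradiction: if $I$ is not norm-minimizing, choose an ideal $I'$ with $[I']=[I]$ and $N(I')<N(I)$; then $I'L$ lies in the ideal class of $IL=J$, and by multiplicativity $N(I'L)=N(I')N(L)<N(I)N(L)=N(J)$, contradicting the norm-minimality of $J$. The same argument with the roles of $I$ and $L$ interchanged finishes (i).

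For (ii), note that for any ideal class $\gamma$ the set $\{N(J):[J]=\gamma\}$ is a nonempty set of positive integers, hence attains its minimum at some ideal $I$, which is then norm-minimizing. Factor $I=P_1^{k_1}\cdots P_n^{k_n}$ into primes; peeling off one prime factor at a time and applying (i) shows that each $P_i$ is norm-minimizing. Since $\gamma=[I]=[P_1]^{k_1}\cdots[P_n]^{k_n}$ is a product of classes of norm-minimizing primes, these primes generate the class group.

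For (iii), the one step that requires an idea rather than bookkeeping is to introduce the ``reciprocal'' ideal $J:=\frac{b}{a}R\cap R$ (a nonzero ideal, since $b\in J$). Multiplication by the nonzero element $a$ commutes with intersection, so $aJ=aR\cap bR$, and likewise $bI=b(\frac{a}{b}R\cap R)=aR\cap bR$; hence $aJ=bI$. Because $aR$ and $bR$ are principal, passing to ideal classes gives $[J]=[aJ]=[bI]=[I]$, while passing to norms gives $N(a)N(J)=N(aJ)=N(bI)=N(b)N(I)$. Since $I$ is norm-minimizing and $J$ lies in the same class, $N(I)\le N(J)$, and therefore $N(a)=N(b)\,N(I)/N(J)\le N(b)$. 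I do not expect a real obstacle here: (i) and (ii) are formal consequences of multiplicativity, and the only mildly clever point is recognizing in (iii) that $\frac{a}{b}R\cap R$ and $\frac{b}{a}R\cap R$ always lie in the same ideal class with norms in the ratio $N(a):N(b)$.
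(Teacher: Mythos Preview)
Your proof is correct and follows essentially the same approach as the paper. The paper dismisses (i) as obvious and (ii) as an easy consequence of (i), and for (iii) it introduces exactly your ``reciprocal'' ideal (written there as $I'=\frac{b}{a}I=R\cap\frac{b}{a}R$), observes it lies in the same class with $N(I')=N(b)N(a)^{-1}N(I)$, and concludes $N(b)N(a)^{-1}\ge 1$ from norm-minimality; your version simply makes the computation $aJ=aR\cap bR=bI$ explicit.
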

\begin{proof}
The proof of part (i) is obvious and (ii) follows easily from (i).
To prove (iii) observe that for each $I=\frac{a}{b}R\cap R$, the
integral ideal $I'=\frac{b}{a}I=R\cap\frac{b}{a}R$ is in the same
class and $N(I')=N(b)N(a)^{-1}N(I)$. Thus, if $I$ is norm
minimizing, necessarily $N(b)N(a)^{-1}\geq 1$
\end{proof}

\begin{lemma}\label{ground-notnm} Let $\vp$ be a ground state of $\cT[R]$. Then
$\vp (e_I^x)=0$ for each ideal $I$ in $R$ which is not
norm-minimizing and for each $x\in R/I$.
\end{lemma}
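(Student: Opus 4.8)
The plan is to reduce the statement, using positivity of $\vp$ and the monotonicity $e_I\le e_{I_1}$ for divisors $I_1\mid I$, to ideals of the special shape controlled by Proposition~\ref{tent}(3). The key arithmetic point I would isolate is: \emph{if $I$ is not norm-minimizing, then $I$ has a divisor $I_1$, with $I_1\ne R$, of the form $I_1=\tfrac ab R\cap R$ with $a,b\in\rx$ and $N(a)>N(b)$.}

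To produce such an $I_1$, let $D$ be an ideal of minimal norm in the ideal class $[I]$. Since $I$ is not norm-minimizing, $N(D)<N(I)$, and in particular $I\nmid D$. Set $\mathfrak g=\gcd(I,D)$ and $I_1=I\mathfrak g\inv$, $D_1=D\mathfrak g\inv$; these are integral ideals, $I_1$ and $D_1$ are coprime, $I_1\ne R$ (because $I\nmid D$), and $[I_1]=[D_1]=[I][\mathfrak g]\inv$. Moreover $N(D_1)=N(D)/N(\mathfrak g)<N(I)/N(\mathfrak g)=N(I_1)$. Since $I_1D_1\inv$ lies in the trivial ideal class, it is a principal fractional ideal $\tfrac ab R$ for some $a,b\in\rx$; coprimality of $I_1$ and $D_1$ forces the positive part $\tfrac ab R\cap R$ of this fractional ideal to be exactly $I_1$, while $N(a)/N(b)=N(I_1)/N(D_1)>1$.

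For this $I_1$, Lemma~\ref{frac}(b) gives $e_{I_1}=s_b^*s_as_a^*s_b$; taking adjoints in relation Ta gives $u^ys_b^*=s_b^*u^{by}$, and hence $e_{I_1}^y=u^ye_{I_1}u^{-y}=s_b^*\,u^{by}s_as_a^*u^{-by}\,s_b$ for every $y\in R$. As $N(a)>N(b)$, Proposition~\ref{tent}(3) gives $\vp(e_{I_1}^y)=0$ for all $y\in R$, and therefore for every class in $R/I_1$. Finally $I_1\mid I$ yields $e_I=e_{I\cap I_1}=e_Ie_{I_1}$, so $e_I\le e_{I_1}$ and $0\le e_I^x\le e_{I_1}^{\bar x}$ where $\bar x$ is the class of $x$ modulo $I_1$; applying the positive functional $\vp$ gives $0\le\vp(e_I^x)\le\vp(e_{I_1}^{\bar x})=0$. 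The only nonformal step is the arithmetic one: a non-norm-minimizing $I$ need not itself admit a representation $\tfrac ab R\cap R$ with $N(a)>N(b)$, so passing to the divisor $I_1$ is essential, and the point to check carefully is that dividing $I$ and $D$ by their gcd yields coprime ideals in a common ideal class for which the norm inequality points in the right direction.
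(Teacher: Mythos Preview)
Your proof is correct, but it takes a longer route than the paper's. The paper simply picks a norm-minimizing $J$ in the class of $I$, writes $bI=aJ$ with $a,b\in R^\times$, and observes that $e_I=s_b^*s_ae_Js_a^*s_b$ (by relation~Tc and $aJ=bI$); since $N(a)/N(b)=N(I)/N(J)>1$, Proposition~\ref{tent}(2) applied to the factorization $e_I^x=(u^xs_b^*s_ae_J)\cdot(s_a^*s_bu^{-x})$ gives $\vp(e_I^x)=0$ directly. No coprimality and no passage to a divisor are needed.

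Your argument instead aims at Proposition~\ref{tent}(3), whose middle factor is exactly $s_as_a^*=e_{aR}$ rather than a general $e_J$. That forces you to produce a divisor $I_1=\frac{a}{b}R\cap R$ with $N(a)>N(b)$, and you handle this cleanly with the gcd trick. This works, and your observation that $I$ itself need not have such a representation (because a smaller-norm $M\in[I]$ coprime to $I$ need not exist) is a fair point. But the paper sidesteps the whole issue by using part~(2) of Proposition~\ref{tent}, which tolerates the extra factor $e_J$; this is what makes the divisor detour unnecessary.
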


\bproof If $I$ and $J$ are two ideals in the same ideal class, then
there exist integers $a$ and $b$ in $R^\times$ such that $bI = aJ$,
so $e_I = s_b^* s_a e_J s_a^* s_b$. Assuming that $J$ is
norm-minimizing but $I$ is not, then $N(a)> N(b)$ by
Lemma~\ref{mini}(iii), so we may use part (2) of Proposition
\ref{tent} on the product $(u^x s_b^* s_a e_J) \, (s_a^* s_b
u^{-x})=e_I^x$ to finish the proof. \eproof In particular, the above
proposition implies that $\vp (e_P^x)=0$ for each prime ideal $P$
which is not norm-minimizing and for each $x\in R/P$. Thus $\vp
(\ve_P)=\vp (1- f_P)=1$ for such ideals. To take advantage of this
feature, we will order the prime ideals in $R$ in such a way that
$P_1,\ldots, P_k$ are norm-minimizing while all the other prime
ideals $P_{k+1},P_{k+2},\ldots$ are not. By Lemma~\ref{mini}(ii) the
(finite) set $\nmi$ of norm-minimizing ideals in the semigroup
$\cI_k$ generated by the $P_1,\ldots, P_k$ is in fact the finite set
of {\em all} norm-minimizing ideals of $R$. The projection
$\ve_{\nmi}:=\sum_{I\in\nmi}\ve_{IP_1\cdots P_k}$ corresponding to
the norm-minimizing ideals will be the key to our characterization
of ground states.

\begin{lemma}\label{ground-allnm} Let $\vp$ be a ground state of $\cT[R]$ and assume
  $n > k$ so that
$P_1,\ldots, P_k$ are norm-minimizing while $P_{k+1},P_{k+2},\ldots,
P_n$ are not. If $\ve_{\nmi}:=\sum_{I\in\nmi}\ve_{IP_1\cdots P_k}$,
then $\vp (\ve_{\nmi}\ve_{P_{k+1}P_{k+2}\cdots P_n})=1$.
\end{lemma}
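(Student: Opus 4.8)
The plan is to reduce everything to the single equality $\varphi(\ve_{\nmi})=1$, combined with the vanishing already available from Lemma~\ref{ground-notnm}, by repeatedly using the elementary observation that if $p_1,\dots,p_r$ are pairwise commuting projections in a C*-algebra and $\psi$ is a state with $\psi(p_i)=1$ for all $i$, then $\psi(p_1\cdots p_r)=1$. (Induct: $1-p_1\cdots p_r=(1-p_1\cdots p_{r-1})+(p_1\cdots p_{r-1})(1-p_r)$, and $|\psi((p_1\cdots p_{r-1})(1-p_r))|^2\le\psi(p_1\cdots p_{r-1})\,\psi(1-p_r)=0$ by Cauchy--Schwarz.) All the $e_I,f_I,\ve_I$ commute with one another by Lemma~\ref{prop}(d), so this applies freely inside $\bar{\cD}$.

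First I would handle the non-norm-minimizing primes. By Lemma~\ref{ground-notnm}, $\varphi(e_P^x)=0$ for every $x\in R/P$ whenever $P$ is not norm-minimizing, in particular for $P=P_j$ with $k<j\le n$; since $f_{P_j}=\sum_{x\in R/P_j}e_{P_j}^x$ is a finite sum, $\varphi(f_{P_j})=0$, hence $\varphi(\ve_{P_j})=\varphi(1-f_{P_j})=1$. More generally, the same argument shows $\varphi(f_I)=0$ for every non-norm-minimizing ideal $I$. For $I\in\cI_k$ one has, by Lemma~\ref{prodel} and Lemma~\ref{prop}(e), $\ve_{IP_1\cdots P_k}=\sum_{x\in R/I}\delta_{I,k}^x=\sum_{x\in R/I}e_I^x\,\ve_{IP_1\cdots P_k}=f_I\,\ve_{IP_1\cdots P_k}\le f_I$, so $\varphi(\ve_{IP_1\cdots P_k})=0$ for every non-norm-minimizing $I\in\cI_k$ (equivalently, every $I\in\cI_k\setminus\nmi$).

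The heart of the argument is then $\varphi(\ve_{\nmi})=1$. Here I would exploit that $\sum_{b=1}^{N}\ve_{P^b}=f_{P^0}-f_{P^N}=1-f_{P^N}$ telescopes, and that $\nmi$ is finite (Lemma~\ref{mini}(ii)). Choose, for each $i\le k$, an integer $N_i$ so large that $N(P_i)^{N_i}>\max_{J\in\nmi}N(J)$ (so $P_i^{N_i}$ is not norm-minimizing) and also $N_i-1$ exceeds the $P_i$-exponent of every ideal in $\nmi$. Then $\varphi(f_{P_i^{N_i}})=0$, so $\varphi\big(\sum_{b=1}^{N_i}\ve_{P_i^b}\big)=1$. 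Expanding, and using coprimality together with $\ve_J\ve_{J'}=\ve_{JJ'}$ and $\ve_{P^n}\ve_{P^m}=0$ for $n\ne m$ (Lemma~\ref{prop}(c)), the product $\prod_{i=1}^{k}\big(\sum_{b=1}^{N_i}\ve_{P_i^b}\big)$ is a projection equal to $\sum_{I\in B}\ve_{IP_1\cdots P_k}$, where $B=\{\,\prod_{i\le k}P_i^{a_i}: 0\le a_i\le N_i-1\,\}$ and the summands are pairwise orthogonal; by the commuting-projections lemma its $\varphi$-value is $1$. Since $B\supseteq\nmi$ and $B\setminus\nmi$ consists of non-norm-minimizing ideals, the orthogonal decomposition $\sum_{I\in B}\ve_{IP_1\cdots P_k}=\ve_{\nmi}+\sum_{I\in B\setminus\nmi}\ve_{IP_1\cdots P_k}$ together with the previous paragraph yields $\varphi(\ve_{\nmi})=1$.

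Finally, $\ve_{P_{k+1}P_{k+2}\cdots P_n}=\ve_{P_{k+1}}\cdots\ve_{P_n}$ by definition, so $\ve_{\nmi}\,\ve_{P_{k+1}P_{k+2}\cdots P_n}$ is a product of the pairwise commuting projections $\ve_{\nmi},\ve_{P_{k+1}},\dots,\ve_{P_n}$, each of $\varphi$-value $1$; the commuting-projections lemma gives $\varphi(\ve_{\nmi}\ve_{P_{k+1}P_{k+2}\cdots P_n})=1$. I expect the real obstacle to be the middle step: one must use the finiteness of the set of norm-minimizing ideals to truncate each infinite $P_i$-adic tower at a finite level $N_i$ --- this is precisely the mechanism by which the ``$P$-adic ghost'' characters of $\bar{\cD}$ (the $R_P$-part of $\Spec\bar{\cD}_P$) are prevented from surviving in a ground state, and it is where Lemma~\ref{mini}(ii) is indispensable. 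Everything else is bookkeeping with orthogonal families of projections in the commutative subalgebra $\bar{\cD}$.
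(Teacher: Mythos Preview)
Your proof is correct and complete, but it takes a genuinely different route from the paper's.

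The paper argues in one stroke: it uses the identity $\ve_{\nmi}\ve_{P_{k+1}\cdots P_n}=\sum_{I\in\nmi,\,x\in R/I}\delta_{I,n}^x$ to recognise the projection in question as having finite support in $\Spec\bar\cD_n$, then observes (via Lemma~\ref{mini}(i)) that the complementary clopen set is covered by the supports of the $e_J^x$ with $J\in\cI_n\setminus\nmi$, and hence (by compactness of $\Spec\bar\cD_n$) that $1-\sum_{I\in\nmi,\,x}\delta_{I,n}^x$ is dominated by a finite sum of such $e_J^x$, each of which has $\varphi$-value zero by Lemma~\ref{ground-notnm}.

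Your approach instead separates the two factors: you first establish $\varphi(\ve_{P_j})=1$ for each non-norm-minimizing prime $P_j$, then prove $\varphi(\ve_{\nmi})=1$ by an explicit truncation argument (choosing levels $N_i$ with $P_i^{N_i}$ non-norm-minimizing, telescoping $\sum_{b\le N_i}\ve_{P_i^b}=1-f_{P_i^{N_i}}$, multiplying over $i$, and discarding the non-norm-minimizing summands via $\ve_{IP_1\cdots P_k}\le f_I$), and finally combine everything with the commuting-projections lemma. This is longer but entirely algebraic inside $\bar\cD$: no appeal to the spectrum or to compactness is needed, and the role of the finiteness of $\nmi$ is made very explicit through the choice of the $N_i$. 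One small remark: the finiteness of $\nmi$ that you invoke is really a consequence of Lemma~\ref{mini}(i) (every norm-minimizing ideal is a product of norm-minimizing primes, and there are only finitely many ideals of norm below any bound), not of part (ii).
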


\bproof Recall the minimal projections $\delta^x_{I,n}\in\cD_n$, for
$I\in\cI_n,\,x\in R/I$,   introduced in Section ~\ref{D}.
Since $\ve_{IP_1P_2\ldots P_n}=\sum_{x\in R/I}\delta^x_{I,n}$ for
each $I\in \cI_n$, we have
\bgln\label{EE}\ve_{\nmi}\ve_{P_{k+1}P_{k+2}\cdots P_n}
=\sum_{I\in\nmi,\,x\in R/I}\delta^x_{I,n},\egln which is a
projection with finite support in $\Spec \bar\cD_n$. In view of
Lemma~\ref{mini}(i), the complement of the support is covered by the
supports of the $e_J^x$ with $J\in\cI_n\setminus \nmi$ and $ x\in
R/J$. By Lemma \ref{ground-notnm} we conclude that
\[
\vp\Big( 1 - \sum_{I\in \nmi, \, x\in R/I} \delta^x_{I,n}\Big) \leq
\sum_{J\in\cI_n\setminus \nmi,\, x\in R/J} \vp(e_J^x) =0,
\]
finishing the proof.\eproof

We will now consider $\cT[R]$ in its universal representation. Thus
let $S$ be the state space of $\cT[R]$ and let
$\pi_S=\bigoplus_{f\in S}\pi_f$ be its universal representation on
the Hilbert space $H_S=\bigoplus_{f\in S}H_f$. We will from now on
assume that $\cT[R]$ is represented via $\pi_S$ and we will omit the
$\pi_S$ from our notation.

If $\vp$ is a state of $\cT[R]$, we denote by $\tilde{\vp}$ its
unique normal extension to the von Neumann algebra $\cT[R]''$
generated by $\cT[R]$.

We write $\tilde{\delta}_I$, $\tilde{\delta}_I^x$, $\tilde{\ve}_I$
for the strong limits, as $n\to\infty$, of the monotonously
decreasing sequences of projections $\delta_{I,n}$, $\delta_{I,n}^x$
and $\ve_{IP_1P_2\ldots P_n}$, respectively (recall that
$\delta_{I,n} := \delta_{I,n}^0$).

In the representation $\mu$ used in section \ref{D}, the projection
$\tilde{\delta}_I^x$ is represented by the projection onto the
one-dimensional subspace of $\ell^2(R/I)$ corresponding to $x\in
R/I$. It is therefore non-zero.

We also consider the projection $E$ defined as the strong limit of
the sequence of projections $\ve_{\nmi}\ve_{P_{k+1}P_{k+2}\ldots
P_n}$. Equation \eqref{EE} immediately gives the formula

$$E=\sum_{I\in\nmi,\,x\in R/I}\tilde{\delta}_I^x.$$

\bprop\label{EEprop} A state $\vp$ of $\cT[R]$ is a ground state if
and only if $\tilde{\vp} (E)=1$.\eprop

\bproof If $\vp$ is a ground state, then $\tilde{\vp}(E) =1$ follows
immediately from Lemma~\ref{ground-allnm}
 because $\tilde{\vp}$ is normal.

If, conversely, $\tilde{\vp} (E)=1$, then
$\vp(w)=\tilde{\vp}(w)=\tilde{\vp}(EwE)$ for each $w\in\cT[R]$. In
order to show that condition (3) in Proposition~\ref{tent} is
satisfied, i.e. that $\tilde{\vp}(Es_b^*u^xs_as_a^*u^{-x}s_bE)=0$
whenever $N(a)>N(b)$, it suffices to show that
$\delta_I^ys_b^*u^xs_as_a^*u^{-x}s_b\delta_I^{y}=0$ for all $I\in
\nmi$ and $y\in R/I$, whenever $N(a)>N(b)$. This amounts to showing
that $\delta_{bI}^ys_as_a^*\delta_{bI}^{y}=0$ whenever $N(a)>N(b)$.
However, by equation \eqref{rem}, this last expression can be
non-zero only if $bI\subset aR$. This inclusion implies that
$I\subset \frac{a}{b}R\cap R$, i.e. that the ideal $\frac{a}{b}R\cap
R$ divides $I$. Since $I$ is norm-minimizing, $\frac{a}{b}R\cap R$
then has to be norm-minimizing, too, and $N(a)\leq N(b)$ by Lemma
\ref{mini}(iii).\eproof

\blemma\label{unit} Let $I, J\in \cI_n$ and let $a,b,a',b'\in R$
such that $aI=bJ$ and $a'I=b'J$. Then there is $g\in R^*$ such that
$s_{b'}^*s_{a'}=s_gs_b^*s_a$. The operators
$s_{a'}^*s_{b'}\tilde{\delta}_I$ and $s_b^*s_a\tilde{\delta}_I$ are
partial isometries with support $\tilde{\delta}_I$ and range
$\tilde{\delta}_J$. If $I,J,L$ are three ideals in $\cI_n$ and
$aI=bJ=cL$, then $s_c^*s_b\tilde{\delta}_J
s_b^*s_a\tilde{\delta}_I=s_c^*s_a\tilde{\delta}_I$\elemma

\bproof We have $(a/b)I=J=(a'/b')I$ whence $a'/b'=ga/b$ for some
$g\in R^*$. Thus $gab'=a'b$ and $s_gs_as_{b'}=s_{a'}s_b$.
Multiplying this from the left by $s_a^*s_{a'}^*$ gives the first
assertion (note that $s_g$ and $s_g^*$ commute with $s_a,s_{a'}$).
The second assertion then follows from Lemma \ref{inv}. Finally,
$s_c^*s_bs_b^*s_a\tilde{\delta} _I=s_c^*s_a\tilde{\delta}_I$ from
equation \eqref{rem} and the fact that $aI\subset bR$. \eproof

\bprop\label{EF} The corner $M=E\cT[R]E$ is a C*-algebra isomorphic
to

$$\bigoplus_{\gamma\in \Gamma} M_{k_\gamma
N(J_\gamma)}(C^*(J_\gamma)\rtimes R^*)$$

Here $\Gamma$ denotes the class group,
$k_\gamma=\abs{\nmi\cap\gamma}$ and $J_\gamma$ is any fixed ideal in
$\nmi\cap\gamma$ (they are all isomorphic).\eprop

\bproof We use the partition of $E$ as a sum of the projections
$\tilde{\delta}_I^x$, $I\in\nmi$, $x\in R/I$.

If $I_1,I_2\in\nmi$ are two ideals which are not in the same ideal
class and $w$ is an element of $\cT[R]$ of the form
$w=s_b^*e_Lu^ys_a$ with $a,b\in R^\times$, $y\in R$ then

$$\tilde{\delta}_{I_1}^{x_1}w\tilde{\delta}_{I_2}^{x_2}=
s_b^*\tilde{\delta}_{bI_1}^{bx_1}e_Lu^y\tilde{\delta}_{aI_2}^{ax_2}s_a=0$$

because $\tilde{\delta}_{L_1}^{t_1}\tilde{\delta}_{L_2}^{t_2}=0$ for
two different ideals $L_1,L_2$ independently of the choice of
$t_1,t_2$. Thus
$\tilde{\delta}_{I_1}^{x_1}\,\cT[R]\,\tilde{\delta}_{I_2}^{x_2}=0$.
If we write

$$E_\gamma=\sum_{I\in\nmi\cap\gamma,\,x\in R/I}\tilde{\delta}_I^x$$

then $E=\sum_{\gamma}E_\gamma$ and
$E_{\gamma_1}\cT[R]\,E_{\gamma_2}=0$ whenever $\gamma_1\neq
\gamma_2$.

If $I,J\in\nmi$ are two ideals in the same ideal class $\gamma$, we
can choose, according to Lemma \ref{unit}, a partial isometry
$c_{JI}$ of the form $c_{JI}=s_a^*s_b\tilde{\delta}_I$ with support
$\tilde{\delta}_I$ and range $\tilde{\delta}_J$. This element is
well determined up to multiplication by a unitary $s_g$, $g\in R^*$.
By fixing a reference ideal $J_\gamma$ in the class $\gamma$ and
choosing first the $c_{IJ_\gamma}$ and then putting $c_{LI}=
c_{LJ_\gamma}c_{IJ_\gamma}^*$, we may assume that the $c_{JI}$ have
the property that $c_{JI}=c_{IJ}^*$ and $c_{LJ}c_{JI}=c_{LI}$ for
$I,J,L\in\nmi\cap\gamma$ (i.e. they are matrix units). They generate
a matrix algebra isomorphic to $M_{k_\gamma}(\Cz)$. Setting

$$c_{IJ}^{xy}\, = \,u^xc_{IJ}u^{-y}$$

we obtain a system of matrix units for the larger index set
$\{(I,x)\, \big|\,I\in\nmi\cap\gamma\, , \,x\in R/I\}$. This system
generates a matrix algebra isomorphic to $M_{k_\gamma
N(J_\gamma)}(\Cz)$ (note that $N(I)=N(J_\gamma)$ for all
$I\in\nmi\cap\gamma$).

Consider again an element $w$ of $\cT[R]$ of the form
$w=s_b^*e_Lu^ys_a$ with $a,b\in R^\times$, $y\in R$. Then
$\tilde{\delta}_{J_\gamma}w\tilde{\delta}_{J_\gamma}$ is non-zero
only if $bJ_\gamma=aJ_\gamma$, $y\in aJ_\gamma$ and $L\supset
aJ_\gamma$. In that case we get

$$\tilde{\delta}_{J_\gamma}w\tilde{\delta}_{J_\gamma}=
u^{y/b}s_b^*s_a\tilde{\delta}_{J_\gamma}=u^{y/b}s_g\tilde
{\delta}_{J_\gamma}$$

for a suitable $g\in R^*$. This shows that
$\tilde{\delta}_{J_\gamma}\cT[R]\,\tilde{\delta}_{J_\gamma}$ is
isomorphic to the subalgebra $\mathfrak C$ of $\cT[R]$ generated by
the $s_g,\,g\in R^*$ and the $u^x,\,x\in J_\gamma$. On the other
hand the representation of $\cT[R]$ constructed in section \ref{>2}
shows that the surjective map $C^*(J_\gamma)\rtimes R^*\to \mathfrak
C$  from the crossed product is an isomorphism. Therefore
$\tilde{\delta}_{J_\gamma}\cT[R]\,\tilde{\delta}_{J_\gamma}$ is
isomorphic to the crossed product $C^*(J_\gamma)\rtimes R^*$.

Finally, the map that sends a matrix $(w_{I_1I_2}^{x_1x_2})$ in
$M_{k_\gamma N(J_\gamma)}(\mathfrak C)$ to

$$\sum c_{I_1J_\gamma}^{x_10}w_{I_1I_2}^{x_1x_2}c_{J_\gamma I_2}^{0x_2}$$

defines an isomorphism $M_{k_\gamma N(J_\gamma)}(\mathfrak C)\to
E_\gamma\cT[R]\,E_\gamma$.\eproof

\btheo The ground states of $\cT[R]$ are exactly the states of the
form $\vp (w)=\psi (EwE)$ where $\psi$ is an arbitrary state of
$E\cT[R]E\cong \bigoplus_\gamma M_{k_\gamma
N(J_\gamma)}(C^*(J_\gamma\rtimes R^*))$.\etheo

\bproof This is immediate from propositions \ref{EEprop} and
\ref{EF}.\eproof

\begin{appendix}
\section{Asymptotics for partial $\zeta$-functions}

As above let $R$ be the ring of algebraic integers in a number field
$K$. Also let $P_1,P_2,\ldots$ be an enumeration of the prime ideals
in $R$ such that $N (P_i) \le N (P_{i+1})$ for all $i \ge 1$ and let
$\cI_n$ be the semigroup generated by $P_1,P_2,\ldots ,P_n$. For
each $\gamma$ in the class group $\Gamma$ of $K$ and each
$0<\sigma\leq 1$ set

\bglnoz \zeta^{(n)}_\gamma(\sigma) =\sum_{I\in
\cI_n\cap\gamma}N(I)^{-\sigma}\eglnoz

Recall the statement of Theorem \ref{den}:\quad Let $0<\sigma\leq1$.
Then for any two ideal classes $\gamma_1,\gamma_2$ we have

$$\lim_{n\to\infty}\;\frac{\zeta^{(n)}_{\gamma_1}
(\sigma)}{\zeta^{(n)}_{\gamma_2} (\sigma)}\;=1.$$

\bproof[Proof of Theorem \ref{den}]

Let $\psi_\gamma :\Gamma\to \{0,1\}$ denote the characteristic
function of the one-point set $\{\gamma\}$. For every character
$\chi$ of the abelian group $\Gamma$ let $a_\gamma(\chi)=
|\Gamma|^{-1} \overline{\chi (\gamma)}$ so that

$$\psi_\gamma =\sum_{\chi\in\hat{\Gamma}}a_\gamma(\chi)\chi$$

In the following we also consider $\chi$ and $\psi_\gamma$ as
functions on the set of non-zero integral ideals. We have

\bgln\label{one}\zeta_\gamma^{(n)}(\sigma)=\sum_{I\in\cI_n}\psi_{\gamma}
(I)N(I)^{-\sigma}= \sum_{\chi\in\hat{\Gamma}}\Big
(a_\gamma(\chi)\sum_{I\in\cI_n}\chi
(I)N(I)^{-\sigma}\Big)\\
\nonumber=\sum_{\chi\in\hat{\Gamma}}\Big(a_\gamma(\chi)
\prod_{i=1}^n \Big(1-\chi (P_i)N(P_i)^{-\sigma}\Big)^{-1}\Big)\egln

In order to study the asymptotics of $\prod_{i=1}^n \left(1-\chi
(P_i)N(P_i)^{-\sigma}\right)^{-1}$ for $n\to\infty$ we consider

\bgln\label{two} f_n(\chi,\sigma)=\log \prod_{i=1}^n \left(1-\chi
(P_i)N(P_i)^{-\sigma}\right)^{-1}\\ \nonumber :=\sum_{\nu
=1}^\infty\frac{1}{\nu}\sum_{i=1}^n \chi
(P_i^\nu)N(P_i)^{-\nu\sigma}\egln

Up to finitely many terms the first sum is bounded by a constant
which is independent of $n$:
\begin{eqnarray*}
 \Big| \sum_{\nu > 1/\sigma} \frac{1}{\nu} \sum^n_{i=1} \chi (P^{\nu}_i) N (P_i)^{-\nu\sigma}\Big| & \le & \sum_{\nu > 1/\sigma} \frac{1}{\nu} \sum^{\infty}_{i=1} N (P_i)^{-\nu \sigma} \\
& = & \sum^{\infty}_{i=1} N(P_i)^{-\sigma [1/\sigma]} \sum^{\infty}_{\nu=1} \frac{N (P_i)^{-\nu\sigma}}{\nu + [1/ \sigma]} \\
& \le & \sum^{\infty}_{i=1} N (P_i)^{-\sigma [1/\sigma]} \frac{N(P_i)^{-\sigma}}{1-N (P_i)^{-\sigma}} \\
& \le & \frac{1}{1-2^{-\sigma}} \sum^{\infty}_{i=1} N (P_i)^{-\sigma (1 + [1/\sigma])} \\
& < & \frac{1}{1-2^{-\sigma}} \zeta_K (\sigma (1 + [1/\sigma]) <
\infty \; .
\end{eqnarray*}

Therefore

\bgln\label{three} f_n (\chi , \sigma) =\sum_{1\leq
\nu\leq1/\sigma}\frac{1}{\nu}\sum_{i=1}^n \chi
(P_i^\nu)N(P_i)^{-\nu\sigma}\,+\,O(1)\egln

where the $O$-constant depends on $\sigma$ but not on $n$ or $\chi$.

Let us now fix some $1 \le \nu \le 1/\sigma$. The values of $\chi$
are $h$-th roots of unity where $h=\abs{\Gamma}$ is the class
number. We get

\bgln \label{four} \sum^n_{i=1} \chi (P^{\nu}_i) N(P_i)^{-\nu
\sigma} = \sum_{\zeta^h=1} \zeta^{\nu} \sum_{\gamma \in \chi^{-1}
(\zeta)} \omega^{(\nu\sigma)}_{\gamma} (n) \; . \egln

Here for $\kappa \in \mathbb{R} , \gamma \in \Gamma$ and $n \ge 1$
we have set:
\[
 \omega^{(\kappa)}_{\gamma} (n) = \sum^n_{i=1 \atop P_i \in \gamma} N (P_i)^{-\kappa} \; .
\]

\blemma \label{deningerapp1} Fix some $0 \le \kappa \le 1$  and
write $\omega_{\gamma} (n) = \omega^{(\kappa)}_{\gamma} (n)$. Set
\[
 \omega (n) = \frac{1}{h} \sum^n_{i=1} N (P_i)^{-\kappa} \; .
\]
Then $\omega (n) \to \infty$ as $n \to \infty$ and for arbitrary
$\gamma \in \Gamma$ we have $\lim_{n\to\infty} \frac{\omega_{\gamma}
(n)}{\omega (n)} = 1$. \elemma

The proof of the lemma is given below. For $1 \le \nu \le 1/\sigma$
we have $0 < \kappa = \nu\sigma \le 1$. Using \eqref{four} and the
lemma, we get for $n \to \infty$:
\[
 \frac{1}{\omega (n)} \sum^n_{i=1} \chi (P^{\nu}_i) N (P_i)^{-\nu\sigma} \to \sum_{\zeta^h=1} \zeta^{\nu} |\chi^{-1} (\zeta)| \; .
\]

We have the identities \bglnoz \sum_{\zeta^h =1}\zeta^\nu
|\chi^{-1}(\zeta)|= |\Ker(\chi)|\sum_{\zeta \in \Img \chi}\zeta^\nu
=\left\{\begin{array}{l} h \quad\mbox{if}\;\abs{\Img
\chi}\,\mid\,\nu
\\[3pt]
0\quad\mbox{if}\;\abs{\Img \chi}\,\nmid
\,\nu\end{array}\right.\eglnoz

Therefore, using (\ref{three}) we get

\bgln\label{seven} \lim_{n\to \infty} \;
\frac{1}{\omega(n)}f_n(\chi,\sigma) = \alpha (\chi) :=
h\sum_{1\leq\nu\leq 1/\sigma,\,\abs{\Img \chi}\big|\nu}\frac{1}{\nu}
\geq 0 \egln

Note that if $\chi$ is not the trivial character $\textbf{1}$, then
$\alpha (\chi)<\alpha (\textbf{1})$.

Let \bgln \label{eight} L_n(\chi,\sigma)\defeq \prod_{i=1}^n
\left(1-\chi (P_i)N(P_i)^{-\sigma}\right)^{-1}=\exp
f_n(\chi,\sigma)\egln

From (\ref{one}) we get

$$\zeta^{(n)}_\gamma(\sigma) = \sum_\chi
a_\gamma(\chi)L_n(\chi,\sigma)$$

Because of \eqref{seven} and \eqref{eight} one knows that for
$n\to\infty$

$$0\,<\,L_n(\textbf{1},\sigma) = \prod_{i=1}^n \left(1-N(P_i)^{-\sigma}
\right)^{-1}\lori\infty$$

Also

$$\Big|\frac{L_n(\chi,\sigma)}{L_n(\textbf{1},\sigma)}\Big|\,=\,\exp
\Rea \left(f_n(\chi,\sigma)-f_n(\textbf{1},\sigma)\right) $$

Now assume that $\chi\neq \textbf{1}$. Since $\omega (n)\to\infty$
and

$$\lim_{n\to\infty} \frac{1}{\omega
(n)}\left(f_n(\chi,\sigma)-f_n(\mathbf{1},\sigma)\right) =
\alpha(\chi)-\alpha (\mathbf{1})\,<\,0$$

by \eqref{seven}, we find that

$$\lim_{n\to\infty}\Rea \left(f_n(\chi,\sigma)-f_n(\mathbf{1},\sigma)\right)
=\,-\infty$$

and thus

$$\lim_{n\to\infty}\frac{L_n(\chi,\sigma)}{L_n(\textbf{1},\sigma)}=\,0\qquad
{\mathrm for}\,\chi\neq \textbf{1}$$

This gives:

$$\lim_{n\to\infty}\frac{\zeta^{(n)}_\gamma(\sigma)}
{L_n(\mathbf{1},\sigma)}=a_\gamma (\mathbf{1}) =\frac{1}{h}$$

and hence

$$\lim_{n\to\infty}\frac{\zeta^{(n)}_\gamma(\sigma)}
{\zeta^{(n)}_\eta(\sigma)}=\,1$$

for any two ideal classes $\gamma$ and $\eta$.

It remains to prove lemma \ref{deningerapp1}. For this we need a
version of the prime number theorem for prime ideals in a given
ideal class with a simple remainder term. For $x \ge 0$ let $\pi_K
(\gamma , x)$ denote the number of prime ideals $P$ in $\gamma$ with
$N (P) \le x$. Using the relation
\[
\mathrm{li}\, (x) = \frac{x}{\log x} + O \Big( \frac{x}{(\log x)^2}
\Big) \quad \text{for} \; x \to \infty
\]
 the corollary after lemma 7.6 in chap. 7, \S\,2 of \cite{Narkiewicz} implies the following asymptotics:
\begin{equation} \label{nine}
\pi_K (\gamma , x) = \frac{1}{h} \frac{x}{\log x} + O \Big(
\frac{x}{(\log x)^2} \Big)
\end{equation}
For $x \ge 0$ and $\kappa \le 1$ let us write:
\[
\Omega_{\gamma} (x) = \Omega^{(\kappa)}_{\gamma} (x) = \sum_{N (P)
\le x \atop P \in \gamma} N (P)^{-\kappa}
\]
and
\[
\Omega (x) = \Omega^{(\kappa)} (x) = \frac{1}{h} \sum_{N (P) \le x}
N (P)^{-\kappa} \; .
\]
We now use the following version of summation by parts: Consider a
function $f$ on the integers $\nu \ge 1$ and a $C^1$-function $g$ on
$[1,\infty)$. For $x \ge 1$ we set $M_f (x) = \sum_{\nu \le x} f
(\nu)$. Then we have
\[
\sum_{\nu \le x} f (\nu) g (\nu) = M_f (x) g (x) - \int^x_1 M_f (t)
g' (t) \, dt \; .
\]
Setting $f (\nu) = \big| \{ P \tei P \in \gamma$ and $N (P) = \nu
\}\big|$ and $g (x) = x^{-\kappa}$ we have
\[
\Omega_{\gamma} (x) = \sum_{\nu \le x} f (\nu) g (\nu) \quad
\text{and} \quad M_f (x) = \pi_K (\gamma , x) \; .
\]
Hence using \eqref{nine} we get for $x \to \infty$:
\begin{eqnarray*}
\Omega_{\gamma} (x) & = & \pi_K (\gamma , x) x^{-\kappa} + \kappa
\int^x_2
\pi_K (\gamma , t) t^{-\kappa} \frac{dt}{t}\\
& = & \frac{1}{h} \frac{x^{1-\kappa}}{\log x} + \frac{\kappa}{h}
\int^x_2 \frac{t^{-\kappa}}{\log t} \, dt + O \Big(
\frac{x^{1-\kappa}}{(\log x)^2} \Big) + O \Big( \int^x_2
\frac{t^{-\kappa}}{(\log t)^2} \, dt \Big) \; .
\end{eqnarray*}
For $\kappa < 1$ we have:
\begin{eqnarray*}
\int^x_e \frac{t^{-\kappa}}{(\log t)^2} \, dt & = &
\int^{\sqrt{x}}_e \frac{t^{-\kappa}}{(\log t)^2} \, dt +
\int^x_{\sqrt{x}}
\frac{t^{-\kappa}}{(\log t)^2} \, dt \\
& \le & \int^{\sqrt{x}}_e t^{-\kappa} \, dt + \frac{1}{(\log
\sqrt{x})^2}
\int^x_{\sqrt{x}} t^{-\kappa} \, dt \\
& = & O \Big( \frac{x^{1-\kappa}}{(\log x)^2} \Big)\; .
\end{eqnarray*}
Hence we get for $\kappa < 1$:
\begin{equation} \label{ten}
\Omega_{\gamma} (x) = \frac{1}{h} \frac{x^{1-\kappa}}{\log x} +
\frac{\kappa}{h} \int^x_2 \frac{t^{-\kappa}}{\log t} \, dt + O \Big(
\frac{x^{1-\kappa}}{(\log x)^2} \Big) \; .
\end{equation}
For the case $\kappa = 1$ note that
\[
\int^x_2 \frac{t^{-1}}{(\log t)^2} \, dt = \frac{1}{\log 2} -
\frac{1}{\log x} = O (1)
\]
and
\[
\int^x_2 \frac{t^{-1}}{\log t} \, dt = \log \log x + O (1) \; .
\]
Thus for $\kappa = 1$ we get
\begin{equation} \label{eleven}
\Omega_{\gamma} (x) = \frac{1}{h} \log \log x + O (1) \; .
\end{equation}
Relations \eqref{ten} and \eqref{eleven} also hold for $\Omega (x)$
instead of $\Omega_{\gamma} (x)$ since the right hand sides do not
depend on $\gamma$ and $\Omega (x) = h^{-1} \sum_{\gamma \in \Gamma}
\Omega_{\gamma} (x)$. It follows that for $\kappa \le 1$ we have
$\Omega_{\gamma} (x) \sim \Omega (x)$. It remains to show that for
$n \to \infty$ we have $\omega_{\gamma} (n) \sim \omega (n)$ as
well. For a given prime number $p$ there are at most $(K :
\mathbb{Q})$ different prime ideals $P$ in $R$ with $P \tei p$. It
follows that for every $\nu \ge 1$ the equation $N (P) = \nu$ has at
most $(K : \mathbb{Q})$ solutions in primes $P$ of $R$. Since $N
(P_i) \le N (P_{i+1})$ for all $i$ we therefore get:
\begin{eqnarray*}
\omega_{\gamma} (n) & = & \Omega_{\gamma} (N (P_n)) + O (N
(P_n)^{-\kappa})
\\
& = & \Omega_{\gamma} (N (P_n)) + O (1) \quad \text{since} \;
\kappa\ge 0
\end{eqnarray*}
and analogously
\[
\omega (n) = \Omega (N (P_n)) + O (1) \; .
\]
This implies the result.\eproof

\section{List of notations}
For $u^x,s_a,e_I$ see Definition 2.1. The projections $f_I,\ve_I$
are introduced before Lemma 2.4. The commutative subalgebra
$\bar{\cD}$ is introduced at the beginning of section 4.
$\cI_n,\cD_n,\bar{\cD}_n$ are introduced after Lemma 4.5. The
representation $\mu$ of $\cT[R]$ is defined before Lemma 4.6. The
minimal projections $\delta^x_{I,n}$ in $\cD_n$ are introduced in
Lemma 4.7. $e_I^x$ is defined after 4.8. For $Y_R$ see Remark 4.10.
The notation $\cT$ is introduced after Corollary 4.14. The
automorphism $\sigma_t$ is defined at the beginning of section 6.
$R^*$ denotes the group of units (invertible elements) in $R$.
\end{appendix}

\end{document}